\pgfplotsset{compat=1.15}
\newcommand{\footremember}[2]{%
    \footnote{#2}
    \newcounter{#1}
    \setcounter{#1}{\value{footnote}}%
}
\def\R{{\mathbb R}}
\def\N{{\mathbb N}}
\DeclareMathOperator{\rank}{rank}
\DeclareMathOperator{\diag}{diag}
\DeclareMathOperator{\FJ}{FJ}
\DeclareMathOperator{\KKT}{KKT}
\DeclareMathOperator{\bit}{bit}
\newtheorem{theorem}{\bf Theorem}
\newtheorem{lemma}{\bf Lemma}
\newtheorem{remark}{\bf Remark}
\providecommand{\keywords}[1]
{
  \small	
  \textbf{\textbf{Keywords:}} #1
}
\begin{document}
\definecolor{qqzzff}{rgb}{0,0.6,1}
\definecolor{ududff}{rgb}{0.30196078431372547,0.30196078431372547,1}
\definecolor{xdxdff}{rgb}{0.49019607843137253,0.49019607843137253,1}
\definecolor{ffzzqq}{rgb}{1,0.6,0}
\definecolor{qqzzqq}{rgb}{0,0.6,0}
\definecolor{ffqqqq}{rgb}{1,0,0}
\definecolor{uuuuuu}{rgb}{0.26666666666666666,0.26666666666666666,0.26666666666666666}
\newcommand{\vi}[1]{\textcolor{blue}{#1}}
\newif\ifcomment
\commentfalse
\commenttrue
\newcommand{\comment}[3]{%
\ifcomment%
	{\color{#1}\bfseries\sffamily#3%
	}%
	\marginpar{\textcolor{#1}{\hspace{3em}\bfseries\sffamily #2}}%
	\else%
	\fi%
}

\newcommand{\mapr}[1]{{{\color{blue}#1}}}
\newcommand{\revise}[1]{{{\color{blue}#1}}}

\title{Complexity for exact polynomial optimization strengthened with Fritz John conditions}


\author{%
Ngoc Hoang Anh Mai\footremember{1}{CNRS; LAAS; 7 avenue du Colonel Roche, F-31400 Toulouse; France.}
  }

\maketitle

\begin{abstract}
Let $f,g_1,\dots,g_m$ be polynomials of degree at most $d$  with real coefficients in a vector of variables $x=(x_1,\dots,x_n)$.
Assume that $f$ is non-negative on a basic semi-algebraic set $S$ defined by polynomial inequalities $g_j(x)\ge 0$, for $j=1,\dots,m$.
Our previous work [arXiv:2205.04254 (2022)] has stated several representations of $f$ based on the Fritz John conditions.
This paper provides some explicit degree bounds depending on $n$, $m$, and $d$ for these representations.
In application to polynomial optimization, we obtain explicit rates of finite convergence of the hierarchies of semidefinite relaxations based on these representations.
\end{abstract}
\keywords{sum-of-squares; Nichtnegativstellensatz; gradient ideal; Fritz John conditions; polynomial optimization; convergence rate; finite convergence}
\tableofcontents
\section{Introduction}
\paragraph{Nichtnegativstellens\"atze and their degree bounds.} Hilbert's 17th problem concerns the representation of a non-negative polynomial as a sum of squares of rational functions.
Artin gave a positive answer to this problem in \cite{artin1927zerlegung}.
Later Krivine \cite{krivine1964anneaux} and Stengle \cite{stengle1974nullstellensatz} stated a Nichtnegativstellensatz.
It says that we can express a polynomial $f$ non-negative on a basic semi-algebraic set $S$ as a linear combination of the polynomials $g_j$ defining $S$ with weights which are sums of squares of rational functions.
Lombardi, Perrucci, and Roy have recently provided a degree bound for Krivine--Stengle's Nichtnegativstellensatz.
It is a tower of five exponentials depending only on the number of variables, the number of polynomials $g_j$ defining $S$, and the degrees of $f,g_j$.
Consequently, they obtain the best-known bound degrees for the numerators and denominators in Hilbert--Artin's Nichtnegativstellensatz as a tower of five exponentials.

\paragraph{Positivstellens\"atze.} 
Today Positivstellens\"atze, representations of polynomials positive on a basic semi-algebraic set,  are of broad interest with influential applications in polynomial optimization.
Let us revisit two representations commonly used in practice:
Schm\"udgen's Positivstellensatz \cite{schmudgen1991thek} says that we can write a polynomial $f$ positive on a compact basic semi-algebraic set $S$ as a linear combination of products of polynomials defining $S$ with weights which are sums of squares of polynomials.
Putinar presents in \cite{putinar1993positive} a Positivstellensatz saying that we con decompose a polynomial $f$ positive on a compact basic semi-algebraic set $S$ satisfying the so-called Archimedean condition (stated in  \cite{mai2022exact}) as a linear combination of polynomials defining $S$ with weights which are sums of squares of polynomials.

\paragraph{Polynomial optimization.} 
Let $\R[x]$ denote the ring of polynomials with real coefficients in the vector of variables $x$.
Given $r\in\N$, denote by $\R_r[x]$ the linear space of polynomials in $\R[x]$ of degree at most $r$.
Denote by $\Sigma^2[x]$ (resp. $\Sigma^2_r[x]$) the cone of sum of squares of polynomials in $\R[x]$ (resp. $\R_r[x]$).

Given $f,g_1,\dots,g_m\in\R[x]$, consider polynomial optimization problem:
\begin{equation}\label{eq:pop}
    f^\star:=\inf\limits_{x\in S(g)} f(x)\,,
\end{equation}
where $S(g)$ is the basic semi-algebraic set associated with $g=(g_1,\dots,g_m)$, i.e.,
\begin{equation}
    S(g):=\{x\in\R^n\,:\,g_j(x)\ge 0\,,\,j=1,\dots,m\}\,.
\end{equation}
Schm\"udgen's and Putinar's Positivstellensatz are both applicable to polynomial optimization.
Indeed, the idea is to reformulate the original problem as
$f^\star=\sup\{\lambda\in\R\,:\,f-\lambda> 0\text{ on }S(g)\}$
and then replace the constraint ``$f-\lambda> 0\text{ on }S(g)$" with the representations of $f-\lambda$ associated with $g_j$.
More explicitly, using Putinar's Positivstellensatz gives a sequence of reals $(\rho_k^\text{Pu})_{k\in\N}$, where
\begin{equation}
\begin{array}{rl}
\rho_k^\text{Pu}:=\sup\limits_{\lambda,\sigma_j}&\lambda\\
&\lambda\in\R\,,\,\sigma_j\in\Sigma^2[x]\,,\\
&f-\lambda=\sum_{j=1}^m\sigma_jg_j\,,\,\deg(\sigma_jg_j)\le 2k\,,
\end{array}
\end{equation}
where $g_1=1$. 
Here $\deg(\cdot)$ stands for the degree of a polynomial.
The sequence $(\rho_k^\text{Pu})_{k\in\N}$ converges to $f^\star$ under the Archimedean condition on $S$.
Moreover, the program to compute each $\rho_k^\text{Pu}$ is convex, specifically, semidefinite (see \cite{boyd2004convex}).
The sequence of these programs is known as Lasserre's hierarchy \cite{lasserre2001global}.
A similar process applies to Schm\"udgen's Positivstellensatz to obtain the corresponding sequence of reals $(\rho_k^\text{Sch})_{k\in\N}$ defined by 
\begin{equation}
\begin{array}{rl}
\rho_k^\text{Sch}:=\sup\limits_{\lambda,\sigma_j}&\lambda\\
&\lambda\in\R\,,\,\sigma_j\in\Sigma^2[x]\,,\\
&f-\lambda=\sum_{\alpha\in\{0,1\}^m}\sigma_\alpha g_1^{\alpha_1}\dots g_m^{\alpha_m}\,,\,\deg(\sigma_\alpha g_\alpha)\le 2k\,.
\end{array}
\end{equation}

\paragraph{Convergence rates.}  
Schweighofer \cite{schweighofer2004complexity} (resp. Nie--Schweighofer \cite{nie2007complexity}) analyzes the convergence rates of sequence $(\rho_k^\text{Sch})_{k\in\N}$ (resp. $(\rho_k^\text{Pu})_{k\in\N}$).
Despite the polynomial time complexity $\mathcal{O}(\varepsilon^{-c})$ of the former, the latter has unexpected exponential time complexity $\mathcal{O}(\exp(\varepsilon^{-c}))$. 
Baldi and Mourrain have recently provided in \cite{baldi2021moment} an improved complexity $\mathcal{O}(\varepsilon^{-c})$ for sequence $(\rho_k^\text{Pu})_{k\in\N}$.
It relies on the degree bound for Schmudgen's Positivstellensatz on the unit hypercube stated by Laurent and Slot in \cite{laurent2021effective}.
In addition, Laurent and Slot obtain the optimal convergence rate $\mathcal{O}(\varepsilon^{-1/2})$ for minimizing a polynomial on the unit hypercube.
To do this, Laurent and Slot utilize the polynomial kernel method introduced in \cite{fang2020sum}, where Fang and Fawzi provide the optimal convergence rate $\mathcal{O}(\varepsilon^{-1/2})$ for minimizing a polynomial on the unit sphere.
Applying this method again, Slot achieves in \cite{slot2021sum} the optimal convergence rate for $(\rho_k^\text{Sch})_{k\in\N}$ when $S$ is the unit ball or the standard simplex.

\paragraph{Finite convergence.} The finite convergences of the two sequences $(\rho_k^\text{Sch})_{k\in\N}$ and $(\rho_k^\text{Pu})_{k\in\N}$ are mainly studied by Scheiderer \cite{scheiderer2000sums,scheiderer2003sums,scheiderer2006sums}, 
Marshall \cite{marshall2006representations,marshall2009representations}
and Nie \cite{nie2014optimality}.
In particular, Lasserre's hierarchy has finite convergence under the Archimedean condition and some standard optimality conditions. 
These conditions make the original polynomial optimization problem necessarily have finitely many global minimizers at which the Karush--Kuhn--Tucker conditions hold for this problem.
Recent efforts of Nie--Demmel--Sturmfels \cite{nie2006minimizing}, Demmel--Nie--Powers \cite{demmel2007representations}, Nie \cite{nie2013exact}, and our previous work \cite{mai2022exact} are to obtain the finite convergence of Lasserre's hierarchy through the information of the Jacobian of the objective and constrained polynomials.
However, rates of finite convergence for these methods have been open until now.

\paragraph{Nichtnegativstellens\"atze based on Fritz John conditions.}
Let $V(h_{\FJ})$ and $V(h_{\FJ}^+)$ (defined later in \eqref{eq:.polyFJ} and \eqref{eq:.polyFJ.plus}) be the sets of points at which the Fritz John conditions hold for problem \eqref{eq:pop}.
In our previous works \cite{mai2022exact}, we use the finiteness of the images of $S(g) \cap V(h_{\FJ})$ and $S(g) \cap V(h_{\FJ}^+)$ under $f$ to construct representations of $f-f^\star$ without denominators involving quadratic modules and preorderings associated with these two intersections. 
Here $f^\star$ is defined as in \eqref{eq:pop}.
These non-negativity certificates allow us to tackle the following two cases: (i) polynomial $f-f^\star$ is non-negative with infinitely many zeros on basic semi-algebraic sets $S(g)$ and (ii) the Karush–Kuhn–Tucker conditions do not hold for problem \eqref{eq:pop} at any zero of $f-f^\star$ on $S(g)$.

\paragraph{Contribution.} 
In this paper, we aim to provide some explicit degree bounds for Nichtnegativstellens\"atze in our previous work \cite{mai2022exact} and the work of Demmel, Nie, and Powers \cite{demmel2007representations}.
To do this, we rely on the constructiveness of these representations and utilize the degree bounds for Krivine--Stengle's Nichtnegativstellens\"atze analyzed by Lombardi, Perrucci, and Roy \cite{lombardi2020elementary} as well as the  upper bound on the number of connected
components of a basic semi-algebraic set due to Coste \cite{coste2000introduction}.

We briefly sketch the analysis for the representation of $f-f^\star$ using the set $V(h_{\FJ})$ as follows:
We first decompose $V(h_{\FJ})$ into finitely many connected components.
Thanks to Coste \cite{coste2000introduction}, the number of these components is bounded from above by a value depending on the number of polynomials defining $V(h_{\FJ})$ and the degrees of these polynomials.
To prove that $f$ is constant on each connected component, we rely on the Fritz John conditions generating $V(h_{\FJ})$ and the assumption that the image of the set of critical points $C(g)$ (defined later in \eqref{eq:critical.set}) under $f$ is finite.
It turns out that $f$ has finitely many values on $V(h_{\FJ})$. 
To obtain the representation of $f-f^\star$ with explicit degree bound, we explicitly construct a ``variety version" of Lagrangian interpolation and utilize  Krivine--Stengle's Nichtnegativstellens\"atze with degree bound given in \cite{lombardi2020elementary}.

We accordingly obtain the explicit convergence rate of each corresponding hierarchy of semidefinite relaxations for a polynomial optimization problem.
The rates of these hierarchies depend on the number of variables, the number of constraints, and the degrees of the input polynomials.
In particular, they do not depend on the coefficients of the input polynomials.
Because of the huge bounds on Krivine--Stengle's Nichtnegativstellens\"atze and the number of connected
components of a basic semi-algebraic set, the rates   are extremely large. 

\paragraph{Organization.}
We organize the paper as follows:
Section \ref{sec:preli} is to recall some necessary tools from real algebraic geometry.  
Section \ref{sec:degree.bound} states the explicit degree bounds for our Nichtnegativstellens\"atze based on the Fritz John conditions. 
Section \ref{sec:application} presents the application of these bounds in analyzing the complexity of the corresponding hierarchies of the exact semidefinite relaxations for a polynomial optimization problem. 
Section \ref{sec:bound.KKT} provides the explicit degree bounds for the representation based on the Karush--Kuhn--Tucker conditions by Demmel, Nie, and Powers in \cite{demmel2007representations}.
Section \ref{sec:implicit.quadraic} shows the implicit degree bound for the representations involving quadratic modules.

\section{Preliminaries}
\label{sec:preli}
This section presents some preliminaries from real algebraic geometry needed to prove our main results.

\subsection{First-order optimality conditions}

Given $p\in\R[x]$, we denote by $\nabla p$ the gradient of $p$, i.e., $\nabla p=(\frac{\partial p}{\partial x_1},\dots,\frac{\partial p}{\partial x_n})$.
We say that the Fritz John conditions hold for problem \eqref{eq:pop} at $u\in S(g)$  if 
\begin{equation}\label{eq:FJcond}
\begin{cases}
    		\exists (\lambda_0,\dots,\lambda_m)\in[0,\infty)^{m+1}\,:\\
          \lambda_0 \nabla f(u)=\sum_{j=1}^m \lambda_j \nabla g_j(u)\,,\\
          \lambda_j g_j(u) =0\,,\,j=1,\dots,m\,,\\
          \sum_{j=0}^m \lambda_j^2=1\,,
    \end{cases}
   \Leftrightarrow
    \begin{cases}
         \exists (\lambda_0,\dots,\lambda_m)\in\R^{m+1}\,:\\
         \lambda_0^2 \nabla f(u)=\sum_{j=1}^m \lambda_j^2 \nabla g_j(u)\,,\\
          \lambda_j^2 g_j(u) =0\,,\,j=1,\dots,m\,,\\
          \sum_{j=0}^m \lambda_j^2=1\,.
    \end{cases}
\end{equation}
In addition, the Karush--Kuhn--Tucker conditions hold for problem \eqref{eq:pop} at $u\in S(g)$  if 
\begin{equation}\label{eq:KKT.cond}
\begin{cases}
    		\exists (\lambda_1,\dots,\lambda_m)\in[0,\infty)^{m}\,:\\
          \nabla f(u)=\sum_{j=1}^m \lambda_j \nabla g_j(u)\,,\\
          \lambda_j g_j(u) =0\,,\,j=1,\dots,m\,.
    \end{cases}
   \Leftrightarrow
\begin{cases}
    		\exists (\lambda_1,\dots,\lambda_m)\in\R^{m}\,:\\
          \nabla f(u)=\sum_{j=1}^m \lambda_j^2 \nabla g_j(u)\,,\\
          \lambda_j^2 g_j(u) =0\,,\,j=1,\dots,m\,.
    \end{cases}
\end{equation}

If $u$ is a  local minimizer for problem \eqref{eq:pop}, then the Fritz John conditions hold for problem \eqref{eq:pop} at $u$.
In contrast, there exist cases where the Karush--Kuhn--Tucker conditions do not hold for problem \eqref{eq:pop} at any local minimizer of this problem.
We denote by $W(f,g)$ the set of all points at which the Fritz John conditions hold but the Karush--Kuhn--Tucker conditions do not hold for problem \eqref{eq:pop}.

\subsection{Sets of critical points}
Given $g_1,\dots,g_m\in\R[x]$, let  $\varphi^g:\R^{n}\to \R^{(n+m)\times m}$ be a function associated with $g=(g_1,\dots,g_m)$ defined by
\begin{equation}
\varphi^g(x)=\begin{bmatrix}
\nabla g(x)\\
\diag(g(x))
\end{bmatrix}=
    \begin{bmatrix}
    \nabla g_1(x)& \dots& \nabla g_m(x)\\
    g_1(x)&\dots&0\\
    .&\dots&.\\
    0&\dots&g_m(x)
    \end{bmatrix}\,.
\end{equation}
Given a real matrix $A$, we denote by $\rank(A)$ the dimension of the vector space generated by the columns of $A$ over $\R$.
We say that a set $\Omega$ is finite if its cardinal number is a non-negative integer.
Let  $C(g)$ be the set of critical points associated with $g$ defined by
\begin{equation}\label{eq:critical.set}
    C(g):=\{x\in\R^n\,:\,\rank(\varphi^g(x))< m\}.
\end{equation}
Given a real matrix $A$, we denote by $\rank^+(A)$ the largest number of columns of $A$ whose convex hull over $\R$ has no zero.
Let  $C^+(g)$ be the set of critical points associated with $g$ defined by
\begin{equation}
    C^+(g):=\{x\in \R^n\,:\,\rank^+(\varphi^g(x))< m\}.
\end{equation}
\subsection{Quadratic modules, preoderings, and ideals}
Given $g_1,\dots,g_m\in\R[x]$, let $Q_r(g)[x]$ be the truncated quadratic module of order $r\in\N$ associated with $g=(g_1,\dots,g_m)$, i.e., 
\begin{equation}
    Q_r(g)[x]:=\{\sigma_0 +\sum_{j=1}^m g_j\sigma_j\,:\,\sigma_j\in\Sigma^2[x]\,,\,\deg(\sigma_0)\le 2r\,,\,\deg(g_j\sigma_j)\le 2r\}\,.
\end{equation}
Let $\Pi g$ be the vector of products of $g_1,\dots,g_m$ defined by
\begin{equation}\label{eq:prod.g}
\Pi g:=(g^\alpha)_{\alpha\in\{0,1\}^m\backslash \{0\}}\,,
\end{equation}
where $\alpha=(\alpha_1,\dots,\alpha_m)$ and $g^\alpha:=g_1^{\alpha_1}\dots g_m^{\alpha_m}$.
We call $Q_r(\Pi g)[x]$ the truncated preordering of order $r\in\N$ generated by $g$, denoted by $P_r(g)[x]$.
Obviously, if $m=1$, it holds that $P_r(g)[x]=Q_r(g)[x]$.

Given $h_1,\dots,h_l\in\R[x]$, let $V(h)$ be the variety generated by $h=(h_1,\dots,h_l)$, i.e.,
\begin{equation}
V(h):=\{x\in\R^n\,:\,h_j(x)=0\,,\,j=1,\dots,l\}\,.
\end{equation}
and let $I_r(h)[x]$ be the truncated ideal of order $r$ generated by $h$, i.e.,
\begin{equation}
    I_r(h)[x]:= \{\sum_{j=1}^l h_j \psi_j\,:\,\psi_j\in\R[x]\,,\,\deg(h_j \psi_j)\le 2r\}\,.
\end{equation}

\subsection{Degree bound for Krivine--Stengle's Nichtnegativstellens\"atze}
We denote by $\bit(d)$ the number of bits of $d\in\N$, i.e.,
\begin{equation}
\bit(d):=
 \begin{cases}
          1 & \text{if } d = 0\,,\\
	    k & \text{if } d \ne 0 \text{ and } 2^{k-1}\le d < 2^k.
         \end{cases}
\end{equation}
Given $n,d,s\in\N$, set
\begin{equation}
b(n,d,s):=2^{
2^{\left(2^{\max\{2,d\}^{4^{n}}}+s^{2^{n}}\max\{2, d\}^{16^{n}\bit(d)} \right)}}\,.
\end{equation}

We recall the degree bound for Krivine--Stengle's Nichtnegativstellens\"atze analyzed by Lombardi, Perrucci, and Roy \cite{lombardi2020elementary} in the following two lemmas:
\begin{lemma}\label{lem:pos}
Let $g_1,\dots,g_m,h_1,\dots,h_l\in\R_d[x]$. 
Assume that $S(g)\cap V(h)=\emptyset$ with $g:=(g_1,\dots,g_m)$ and $h:=(h_1,\dots,h_l)$. 
Set $r=b(n,d,m+l+1)/2$.
Then it holds that $-1 \in P_r(g)[x]+I_r(h)[x]$.
\end{lemma}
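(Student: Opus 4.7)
Since the statement is a direct transcription of a theorem of Lombardi, Perrucci, and Roy \cite{lombardi2020elementary}, my plan is to outline the strategy of their constructive proof of Krivine--Stengle's Positivstellensatz and verify that, with the parameter choice $s = m+l+1$, the resulting degree bound is $b(n,d,s)/2$. I would not attempt a self-contained reproof; instead I would appeal to their machinery and merely audit the parameters.

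The starting point is the non-effective Krivine--Stengle Positivstellensatz: the hypothesis $S(g)\cap V(h)=\emptyset$ means the system $\{g_j\ge 0,\,h_i=0\}$ has no real solution, so there exist $\sigma_\alpha\in\Sigma^2[x]$ and $\psi_i\in\R[x]$ with
\[
-1 \;=\; \sum_{\alpha\in\{0,1\}^m} \sigma_\alpha\, g^\alpha \;+\; \sum_{i=1}^l h_i\,\psi_i,
\]
but a priori their degrees are unbounded. First I would replace the non-constructive Tarski--Seidenberg argument with Hermite's quadratic form, which encodes the real roots of a polynomial system as the signature of an explicitly computable matrix whose entries have controlled degree in the input data.

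Second, I would run an induction on the number of variables $n$. At each step, one eliminates a variable by projecting through a resultant, producing an infeasible system in one fewer variable whose polynomials have degree polynomial in $d$ with respect to the preceding step. Iterating $n$ times yields intermediate polynomials of degree $d^{O(n)}$, while the combinatorial branching over the $s$ polynomials and their sign conditions contributes an extra factor of order $s^{2^n}$. Composing the resulting algebraic identities back to the original $n$ variables and clearing denominators produces an element of $P_r(g)[x]+I_r(h)[x]$ equal to $-1$, with $r$ bounded by $b(n,d,s)/2$.

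The main obstacle is the bookkeeping through the nested exponential layers: each of quantifier elimination, the effective Łojasiewicz inequality, Hermite's theorem, the recursion on $n$, and the combinatorial handling of the $2^m$ products of the $g_j$ contributes a degree blow-up, and compounding them produces the tower $2^{2^{(\cdots)}}$ in the definition of $b(n,d,s)$. The explicit form in the lemma already reflects the tight analysis of \cite{lombardi2020elementary}, so the only verification required on my side is that the count $s=m+l+1$ correctly matches their convention for the number of generators, namely $g_1,\dots,g_m$, $h_1,\dots,h_l$, together with the constant polynomial $1$.
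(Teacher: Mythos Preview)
The paper does not give its own proof of this lemma; it merely states it as a result ``recalled'' from \cite{lombardi2020elementary}, so your plan of appealing directly to that reference is exactly what the paper does. Your sketch of the Lombardi--Perrucci--Roy machinery and the parameter check $s=m+l+1$ goes beyond the paper's treatment, but is entirely consistent with it.
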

\begin{lemma}\label{lem:pos2}
Let $p,g_1,\dots,g_m,h_1,\dots,h_l\in\R_d[x]$. 
Assume that $p$ vanishes on $S(g)\cap V(h)$ with $g:=(g_1,\dots,g_m)$ and $h:=(h_1,\dots,h_l)$. 
Set $r:=b(n,d,m+l+1)/2$ and $s:=2\lfloor r/d\rfloor$.
Then it holds that $-p^s \in P_r(g)[x]+ I_r(h)[x]$.
\end{lemma}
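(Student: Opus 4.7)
The plan is to reduce Lemma \ref{lem:pos2} to Lemma \ref{lem:pos} via the classical Rabinowitsch trick. Introduce an auxiliary variable $t$ and form, in $\R[x,t]$, the enlarged system obtained by keeping the inequalities $g_j(x)\ge 0$ and the equations $h_i(x)=0$ and adjoining the single new equation $tp(x)-1=0$. Since $p$ vanishes on $S(g)\cap V(h)$ by hypothesis, no point $(x,t)\in\R^{n+1}$ can simultaneously satisfy $tp(x)=1$ and $p(x)=0$, so the enlarged basic closed semi-algebraic set is empty.

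Apply Lemma \ref{lem:pos} in $\R[x,t]$ to this system. It yields an identity
$$-1 \;=\; \sigma(x,t) \;+\; \sum_{i=1}^{l} h_i(x)\,\psi_i(x,t) \;+\; (tp(x)-1)\,\phi(x,t),$$
with $\sigma$ in the truncated preordering generated by $g$ in $\R[x,t]$ and $\psi_i,\phi\in\R[x,t]$, subject to the degree bound coming from $b(\cdot,\cdot,\cdot)$. The next step is to specialize $t=1/p(x)$: this annihilates the last summand and produces a rational identity valid wherever $p(x)\ne 0$. Multiplying through by $p(x)^s$ with $s$ the smallest even integer at least as large as the $t$-degree of $\sigma$ clears all denominators. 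Because $\sigma$ is a preordering element in $(x,t)$, every sum-of-squares factor becomes, after multiplication by an even power of $p$, still a sum of squares in $x$ alone; hence we arrive at a decomposition $-p^s=\tilde\sigma+\sum_i h_i\tilde\psi_i$ with $\tilde\sigma$ in the truncated preordering generated by $g$ and $\tilde\psi_i\in\R[x]$.

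The main obstacle will be the degree bookkeeping. One must verify that after adding the Rabinowitsch variable (which raises the number of variables from $n$ to $n+1$, the number of equations from $l$ to $l+1$, and the effective degree from $d$ to $\max\{d,d+1\}$) the bound produced by Lemma \ref{lem:pos} still collapses, after substitution and clearing of denominators, into the stated $r=b(n,d,m+l+1)/2$; the choice $s=2\lfloor r/d\rfloor$ must then be shown to be exactly what the $t$-degree of $\sigma$ forces. The trickiest point is checking that individual summands $\tau_\alpha(x,t)\,g^{\alpha}(x)$ of $\sigma$ retain their preordering form after $t\mapsto 1/p$ and multiplication by $p^s$: one splits $\tau_\alpha$ into its even and odd parts in $t$, writes each square $(\sum_j c_j(x) t^j)^2$ as $p^{-2k_\alpha}(\sum_j c_j(x) p^{k_\alpha-j})^2$ for a suitable $k_\alpha$, and then distributes the extra factors of $p^s$ evenly across terms. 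Once this calibration is carried out, the degree identity matches the one recorded in \cite{lombardi2020elementary}, and this is the technical heart of the statement.
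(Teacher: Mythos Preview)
The paper does not prove Lemma~\ref{lem:pos2}; it is quoted directly from Lombardi--Perrucci--Roy \cite{lombardi2020elementary}, so there is no ``paper's own proof'' to match. That said, your proposed route has a genuine gap at exactly the point you flag as the ``main obstacle.''

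The Rabinowitsch reduction is qualitatively correct: it does produce an identity $-p^{s'}\in P_{r'}(g)[x]+I_{r'}(h)[x]$ for \emph{some} $r',s'$. The problem is the specific constants. When you adjoin the variable $t$ and the equation $tp-1=0$, the parameters fed into Lemma~\ref{lem:pos} become $n+1$ variables, degree $d+1$ (since $\deg(tp-1)=d+1$), and $m+(l+1)$ constraints, so Lemma~\ref{lem:pos} returns
\[
r' \;=\; \tfrac{1}{2}\,b(n+1,\,d+1,\,m+l+2).
\]
Because $b$ is strictly increasing in each argument, $r'>r=\tfrac{1}{2}b(n,d,m+l+1)$, and the gap is enormous (each increment climbs a tower of exponentials). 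After substituting $t=1/p$ and clearing denominators, the exponent $s'$ you need is governed by the $t$-degree of the certificate from Lemma~\ref{lem:pos}, which is bounded only by $2r'$, not by $2\lfloor r/d\rfloor$; and the resulting degrees in $x$ are of order $r'+s'd$, again far above $2r$. There is no mechanism by which this ``collapses'' back to $b(n,d,m+l+1)$; your final sentence asserts the calibration works, but it does not.

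The bound in \cite{lombardi2020elementary} is obtained by a direct, uniform treatment of the weak inference system underlying the Positivstellensatz, in which the Real Nullstellensatz case (with the extra polynomial $p$ counted among the $m+l+1$ input polynomials) is handled alongside the infeasibility case, not derived from it. If you want the stated $r$ and $s$, you must appeal to that direct argument; the Rabinowitsch shortcut via Lemma~\ref{lem:pos} only yields a weaker statement.
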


\subsection{Upper bound on the number of connected components of a basic semi-algebraic set}
We generalize the definition of basic semi-algebraic sets as follows: A semi-algebraic subset of $\R^n$ is a subset of the form
\begin{equation}\label{eq:def.semi.set}
\bigcup_{i=1}^t\bigcap_{j=1}^{r_i}\{x\in\R^n\,:\,f_{ij}(x)*_{ij}0\}\,,
\end{equation}
where $f_{ij}\in\R[x]$ and $*_{ij}$ is either $>$ or $=$.

Given two semi-algebraic sets $A\subset \R^n$ and $B\subset \R^m$, we say that a mapping $f : A \to B$ is semi-algebraic if its graph $\{(x,f(x))\,:\,x\in A\}$ is a semi-algebraic set in $\R^{n+m}$.
A semi-algebraic subset $A\subset \R^n$ is said to be semi-algebraically path connected if for every $x,y$ in $A$, there exists a continuous semi-algebraic mapping $\phi:[0,1] \to A$ such that $\phi(0) = x$ and $\phi(1) = y$.
Note that $\phi$ is piecewise-differentiable in this case (see, e.g., \cite[Theorem 1.8.1]{pham2016genericity}).

Given $n,d,s\in\N$, set
\begin{equation}
c(n,d,s):=d(2d-1)^{n+s-1}\,.
\end{equation}

The upper bound on the number of connected components of a basic semi-algebraic set is stated by Coste \cite[Proposition 4.13]{coste2000introduction} as follows:
\begin{lemma}\label{lem:num.connected}
Let $g_1,\dots,g_m,h_1,\dots,h_l\in\R_d[x]$ with $d\ge 2$. 
The number of (semi-algebraically path) connected components of $S(g)\cap V(h)$ is not greater than $c(n,d,m+l)$.
\end{lemma}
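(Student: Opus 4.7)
The plan is to reduce the bound on path-connected components of the semi-algebraic set $S(g)\cap V(h)$ to a bound on the connected components of a real algebraic variety, and then invoke a Milnor--Thom type estimate obtained via a Morse-theoretic count of critical points of a Morse function on a smooth deformation of the variety.

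First I would convert the inequality constraints into equations by a standard slack-variable trick. For each $j\in\{1,\dots,m\}$ introduce a fresh variable $y_j$, and define
\[
W:=\{(x,y)\in\R^{n+m}\,:\,g_j(x)-y_j^2=0\text{ for }j=1,\dots,m,\ h_k(x)=0\text{ for }k=1,\dots,l\}.
\]
Since $d\ge 2$, the defining equations of $W$ all have degree at most $d$, and there are $m+l$ of them in $n+m$ variables. The coordinate projection $\pi:\R^{n+m}\to\R^n,\,(x,y)\mapsto x$ is continuous and semi-algebraic and restricts to a surjection $W\twoheadrightarrow S(g)\cap V(h)$. Moreover, any semi-algebraic continuous path $\phi:[0,1]\to S(g)\cap V(h)$ admits a semi-algebraic continuous lift to $W$ by choosing signs $y_j=\pm\sqrt{g_j(\phi(t))}$ continuously and piecewise (which is possible because $\phi$ is piecewise differentiable and the branches of the square root separate only at zeros of $g_j\circ\phi$, where the two signs coincide). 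Consequently $\pi$ induces a surjection from the semi-algebraically path-connected components of $W$ onto those of $S(g)\cap V(h)$, and it suffices to bound the number of connected components of the real variety $W$.

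Second, I would bound the number of connected components of $W\subset\R^{n+m}$ by a Milnor--Thom type argument. The idea is to perturb the defining polynomials generically (working, if necessary, in a non-archimedean extension with infinitesimals) so that the resulting variety $W_\varepsilon$ is smooth and bounded, with no fewer components than $W$. A generic linear functional $\ell$ then restricts to a Morse function on $W_\varepsilon$, so each connected component contains at least one critical point, and these critical points are exactly the solutions of a polynomial system (the vanishing of $\ell$ together with the Lagrange conditions involving the gradients of the defining polynomials). Each equation in this augmented system has degree at most $2d-1$, and by a careful application of Bezout one picks up one factor of $(2d-1)$ per defining polynomial and per ambient coordinate, yielding the bound $d(2d-1)^{n+s-1}$ with $s=m+l$.

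The main obstacle is the fine accounting of degrees and dimensions needed to match the explicit constant $c(n,d,m+l)$ rather than a looser variant. A naive collapse of the $m+l$ defining polynomials into a single sum of squares $F=\sum_j(g_j-y_j^2)^2+\sum_k h_k^2$ followed by Milnor's bound $D(2D-1)^{N-1}$ with $D=2d,N=n+m$ only gives $2d(4d-1)^{n+m-1}$, which is the wrong shape. The correct exponent $n+s-1$ is obtained by keeping the constraints separate and, in the Morse-theoretic count, introducing one extra coordinate per defining polynomial (tracking Lagrange multipliers), so that each additional constraint contributes one extra factor of $(2d-1)$ to the Bezout estimate. This is precisely the bookkeeping carried out in Coste's \cite{coste2000introduction}, to which I would defer for the technical details.
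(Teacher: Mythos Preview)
The paper does not give its own proof of this lemma: it simply quotes \cite[Proposition~4.13]{coste2000introduction} verbatim. So there is nothing to compare against beyond noting that your proposal, like the paper, ultimately defers to Coste.

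That said, your sketch has a bookkeeping problem worth flagging. After the slack-variable step you are working with a variety $W$ in $\R^{n+m}$ defined by $m+l$ equations of degree at most $d$. Any Milnor--Thom or Coste-type bound you then invoke will have $n+m$, not $n$, as the ambient-dimension input. In particular, applying Coste's own bound to $W$ (viewed as $m+l$ equality conditions in $n+m$ variables) gives $d(2d-1)^{(n+m)+(m+l)-1}$, which overshoots $c(n,d,m+l)=d(2d-1)^{n+(m+l)-1}$ by a factor of $(2d-1)^m$. Your final paragraph asserts the exponent $n+s-1$ with $s=m+l$, but does not explain why the $m$ extra coordinates $y_j$ cost nothing; the phrase ``one factor of $(2d-1)$ per ambient coordinate'' should produce $n+m$ such factors, not $n$.

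There are two honest ways out. Either you prove a sharper bound of the form $d(2d-1)^{N-1}$ for real varieties in $\R^N$ regardless of the number of defining equations (this would give $d(2d-1)^{n+m-1}\le c(n,d,m+l)$ since $l\ge 0$, but it is a separate and nontrivial result, not the sum-of-squares hypersurface bound you correctly rejected), or you abandon the slack variables and argue directly on the semi-algebraic set in $\R^n$, which is what Coste actually does. Also, your path-lifting argument is unnecessary: for semi-algebraic sets, connected and semi-algebraically path-connected coincide, so continuity and surjectivity of $\pi$ already give the surjection on components.
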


\subsection{Sum-of-squares representations under the finite image assumption}
Let $|\cdot|$ stand for the cardinal number of a set.
Denote by $\delta_{ij}$ the Kronecker delta function at $(i,j)\in\N^2$.

We state in the following lemmas the representations of polynomials nonnegative on semi-algebraic sets under the finite image assumption:
\begin{lemma}\label{lem:quadra}
Let $f,g_1,\dots,g_m\in\R_d[x]$ and $h_1,\dots,h_l\in\R_{d+1}[x]$. 
Assume that $f$ is non-negative on $S(g)$ with $g=(g_1,\dots,g_m)$ and $f(V(h))$ is finite with $h=(h_1,\dots,h_l)$. 
Set $r:=|f(V(h))|$ and $u:=b(n,d+1,m+l+2)/2$.
Then there exist $q\in P_w(g)[x]$ with $w=dr+u$ such that $f - q$ vanishes on $V(h)$.
\end{lemma}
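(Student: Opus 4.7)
The plan is to construct $q$ explicitly by a Lagrange-type interpolation that routes each value of $f$ on $V(h)$ either through a pure SOS piece (for the nonnegative values) or through a Positivstellensatz infeasibility certificate (for the negative values), the latter supplied by Lemma \ref{lem:pos}.

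Enumerate $f(V(h)) = \{a_1,\dots,a_r\}$, and for each index $i$ form the univariate Lagrange polynomial
\[
L_i(y) := \prod_{j\neq i}\frac{y-a_j}{a_i-a_j}\in\R[y],
\]
of degree $r-1$, so that $L_i(a_j)=\delta_{ij}$. Consequently $L_i(f)^2\in\R[x]$ has degree at most $2d(r-1)$, equals $1$ on $V(h)\cap\{f=a_i\}$, and equals $0$ on $V(h)\cap\{f=a_j\}$ for $j\neq i$. The candidate is $q:=\sum_{i=1}^{r}q_i$, where each $q_i$ is built so that $q_i=a_i$ on $V(h)\cap\{f=a_i\}$ and $q_i=0$ on the other level sets of $f$ inside $V(h)$.

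If $a_i\ge 0$, I take $q_i := a_i L_i(f)^2$: this is a sum of squares of degree at most $2d(r-1)\le 2w$, hence lies in $P_w(g)[x]$. The substantive case is $a_i<0$. Because $f$ is nonnegative on $S(g)$, the slice $S(g)\cap V(h_1,\dots,h_l,f-a_i)$ is empty, and all $l+1$ equalities involved have degree at most $d+1$; Lemma \ref{lem:pos} then produces $\tilde p_i\in P_u(g)[x]$ together with $\psi_{ij},\phi_i\in\R[x]$ satisfying $\deg(h_j\psi_{ij})\le 2u$ and $\deg((f-a_i)\phi_i)\le 2u$ such that
\[
-1 \;=\; \tilde p_i \;+\; \sum_{j=1}^{l} h_j\psi_{ij} \;+\; (f-a_i)\phi_i.
\]
I then set $q_i := -a_i\, L_i(f)^2\, \tilde p_i$. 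Writing $\tilde p_i=\sum_\alpha\sigma_{\alpha i}g^\alpha$ with $\sigma_{\alpha i}\in\Sigma^2[x]$ meeting the standard truncation, the positivity $-a_i>0$ makes each coefficient $-a_i L_i(f)^2\sigma_{\alpha i}$ an SOS, so $q_i$ lives in the truncated preordering of order $d(r-1)+u\le w$, i.e.\ in $P_w(g)[x]$.

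The final check is short: for $x\in V(h)$ with $f(x)=a_{i_0}$, all $q_i$ with $i\neq i_0$ vanish at $x$; substituting $x$ into the displayed identity when $a_{i_0}<0$ collapses it to $\tilde p_{i_0}(x)=-1$, yielding $q_{i_0}(x)=(-a_{i_0})\cdot 1\cdot(-1)=a_{i_0}=f(x)$, while the $a_{i_0}\ge 0$ subcase is immediate. Hence $f-q$ vanishes on $V(h)$. The main obstacle is precisely the negative-value regime: plain Lagrange interpolation produces $a_i L_i(f)^2$, which is \emph{not} in the preordering when $a_i<0$, and recovering preordering membership requires a quantitative certificate that $\{f=a_i\}$ misses $S(g)$. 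This is exactly what Lemma \ref{lem:pos} supplies, and it is the source of the additive summand $u$ in the bound $w=dr+u$.
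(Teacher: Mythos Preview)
Your proof is correct and follows the same architecture as the paper's: Lagrange interpolation in the values $f(V(h))$, squared to land in $\Sigma^2$, combined with the infeasibility certificate from Lemma~\ref{lem:pos} for the problematic level sets. The only differences are cosmetic. The paper splits the index set according to whether $W_j\cap S(g)$ is empty rather than according to the sign of $t_j$, and in the empty case it uses the identity $f=(f+\tfrac12)^2-(f^2+\tfrac14)$ to write $q_j=s_1+v_js_2$ before multiplying by $p_j^2$; your construction $q_i=-a_i\,L_i(f)^2\,\tilde p_i$ is a slightly leaner variant that exploits directly that $-a_i>0$ is a nonnegative scalar. Both yield the same degree bound $w=dr+u$ (yours in fact lands at $d(r-1)+u\le w$ for the negative pieces).
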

\begin{proof}
By assumption, we get
$f(V(h)) = \{t_1 ,\dots, t_r \} \subset \R$,
where $t_i\ne t_j$ if $i\ne j$.
For $j=1,\dots,r$, let
$W_j:=V(h,f-t_j)$.
Then $W_j$ is a real variety generated by $l+1$ polynomials in $\R_{d+1}[x]$.
It is clear that $f(W_j)=\{t_j\}$.

Define the following polynomials:
\begin{equation}\label{eq:lagrange.pol}
p_j(x)=\prod_{i\ne j}\frac{f(x)-t_i}{t_j-t_i}\,,\,j=1,\dots,r\,.
\end{equation}
It is easy to check that $p_j(W_i)=\{\delta_{ji}\}$ and $\deg(p_j)\le d(r-1)$.
Without loss of generality, we assume that there is $s\in \{0,1,\dots,r-1\}$ such that $W_j\cap S(g)= \emptyset$, for $j=1,\dots,s$, and $W_i\cap S(g)\ne \emptyset$, for $i=s+1,\dots,r$.

Let $j\in\{1,\dots,s\}$.
Since $W_j \cap S(g) = \emptyset$, Lemma \ref{lem:pos} says that $-1 \in P_u(g)[x]+I_u(h,f-t_j)[x]$.
It implies that there exists $v_j \in P_u(g)[x]$ such that
$-1 = v_j$ on $W_j$. 
We have $f = s_1 - s_2$ for the SOS polynomials $s_1 =(f+\frac{1}{2})^2$ and $s_2 = f^2+\frac{1}{4}$.
It implies that $f = s_1 + v_j s_2$ on $W_j$.
Let $q_j = s_1 +v_j s_2 \in P_{u+d}(g)[x]$.

Since $f\ge 0$ on $S(g)$, it holds that $f=t_i\ge 0$ on $W_i$, for $i=s+1,\dots,r$.  
Now letting
\begin{equation}\label{eq:rep.q}
q =\sum_{j=1}^s q_j p_j^2+\sum_{i=s+1}^r t_i p_i^2\,,
\end{equation}
we obtain $q\in P_w(g)[x]$ since $\deg(t_i p_i^2)\le 2\deg(p_i)\le 2d(r-1)\le 2w$ and
\begin{equation}
\deg(q_j p_j^2)\le \deg(q_j)+2\deg(p_j)\le 2(d+u+d(r-1))=2w\,,
\end{equation}
Hence $f - q$ vanishes on $V(h)=W_1\cup\dots\cup W_r$.
\end{proof}
\begin{lemma}\label{lem:quadra.deno}
Let $f,g_1,\dots,g_m\in\R_d[x]$ and $h_1,\dots,h_l\in\R_{d+1}[x]$. 
Assume that $f$ is non-negative on $S(g)$ with $g=(g_1,\dots,g_m)$ and $f(V(h)\backslash A)$ is finite with $h=(h_1,\dots,h_l)$ and $A\subset \R^n$. 
Set $r:=|f(V(h)\backslash A)|$ and $u:=b(n,d+1,m+l+2)/2$.
Then there exist $q\in P_w(g)[x,\bar\lambda]$ with $w=dr+u$ such that $f - q$ vanishes on $V(h)\backslash A$.
\end{lemma}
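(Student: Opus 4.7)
The plan is to mirror the proof of Lemma \ref{lem:quadra}, lifted into an enlarged space $\R^{n+k}$ of $(x,\bar\lambda)$ that is expressive enough to write ``$V(h) \setminus A$'' as a genuine variety. Concretely, I would introduce polynomials $\tilde h(x,\bar\lambda)$ in auxiliary variables $\bar\lambda = (\bar\lambda_1, \dots, \bar\lambda_k)$ so that the image of $V(\tilde h) \subset \R^{n+k}$ under the coordinate projection $\pi:\R^{n+k}\to \R^n$ coincides with $V(h)\setminus A$. In the Fritz John context of this paper, the natural choice is to take $\bar\lambda$ to be the Fritz John multipliers and to let $\tilde h$ include the original $h$ together with the Fritz John equations (which are linear in $\bar\lambda$ and of degree $d+1$ in $x$); thus $\tilde h$ is a system of at most $l+2$ polynomials of degree at most $d+1$, so $u = b(n,d+1,m+l+2)/2$ survives unchanged.

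After this lift, $f(\pi(V(\tilde h))) = f(V(h)\setminus A) = \{t_1,\dots,t_r\}$ with pairwise distinct $t_i$. Setting $\tilde W_j := V(\tilde h, f - t_j) \subset \R^{n+k}$ and reusing the interpolators
\begin{equation}
p_j(x) = \prod_{i\ne j} \frac{f(x) - t_i}{t_j - t_i}\,,\quad j=1,\dots,r,
\end{equation}
of degree at most $d(r-1)$, one has $p_j(\tilde W_i) = \{\delta_{ji}\}$. I would then split the indices as in Lemma \ref{lem:quadra}: a set $\{1,\dots,s\}$ with $\tilde W_j \cap (S(g)\times\R^k) = \emptyset$, and $\{s+1,\dots,r\}$ otherwise. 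For the second class, because $f\ge 0$ on $S(g)$, $t_j \ge 0$ and the contribution $t_j p_j^2$ lies in $P_w(g)[x,\bar\lambda]$ for free.

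For the first class, Lemma \ref{lem:pos} applied in the variables $(x,\bar\lambda)$ with the system $(g;\tilde h, f-t_j)$ yields $v_j \in P_u(g)[x,\bar\lambda]$ with $-1 \equiv v_j$ on $\tilde W_j$. Using $f = (f+\tfrac12)^2 - (f^2+\tfrac14) = s_1 - s_2$ as before, one has $f \equiv s_1 + v_j s_2$ on $\tilde W_j$, producing $q_j := s_1 + v_j s_2 \in P_{u+d}(g)[x,\bar\lambda]$. Assembling
\begin{equation}
q = \sum_{j=1}^s q_j p_j^2 + \sum_{i=s+1}^r t_i p_i^2 \in P_w(g)[x,\bar\lambda]
\end{equation}
with $w = dr + u$ and checking degrees exactly as in Lemma \ref{lem:quadra}, one sees that $f - q$ vanishes on $V(\tilde h) = \tilde W_1 \cup \dots \cup \tilde W_r$, so the statement ``$f-q$ vanishes on $V(h)\setminus A$'' is interpreted pointwise through the lift: for each $x \in V(h)\setminus A$ there is a $\bar\lambda$ with $(x,\bar\lambda)\in V(\tilde h)$ and $(f-q)(x,\bar\lambda)=0$.

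The main obstacle, and the only step that is genuinely new compared to Lemma \ref{lem:quadra}, is precisely the construction of $(\tilde h,\bar\lambda)$. The lift must (i) be expressive enough that $\pi(V(\tilde h))$ is exactly $V(h)\setminus A$ rather than a strict subset or superset, (ii) keep the number of defining equations and their degrees within the budget that still produces $u = b(n,d+1,m+l+2)/2$, and (iii) ensure that treating $S(g)$ as $S(g)\times \R^k$ does not create spurious intersections with the lifted $\tilde W_j$ that would spoil the sign argument $t_j\ge 0$. Once those constraints are met in the specific setting where this lemma is invoked (the Fritz John setup, via $V(h_{\FJ}^+)$), the rest of the argument is essentially a verbatim rerun of the proof of Lemma \ref{lem:quadra}.
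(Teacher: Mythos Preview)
Your lift is both unnecessary and breaks the bound. The paper's argument is much simpler and does not introduce any new variables: with $f(V(h)\setminus A)=\{t_1,\dots,t_r\}$, define $W_j:=V(h,f-t_j)$ \emph{in the original ambient space}. These are genuine varieties, and since every point of $V(h)\setminus A$ has $f$-value equal to some $t_j$, one has $V(h)\setminus A\subset W_1\cup\dots\cup W_r$. Now run the construction of Lemma~\ref{lem:quadra} verbatim to produce $q\in P_w(g)[x]$ with $f-q$ vanishing on $W_1\cup\dots\cup W_r$; a fortiori it vanishes on the subset $V(h)\setminus A$. No realization of $V(h)\setminus A$ as a variety is required, and the extra ``$\bar\lambda$'' in the statement is vestigial.

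Concretely, your approach fails in two places. First, applying Lemma~\ref{lem:pos} in $\R^{n+k}$ yields $u=b(n+k,d+1,m+l+2)/2$, not $b(n,d+1,m+l+2)/2$; since $b$ is strictly increasing in its first argument, the claimed value of $u$ does \emph{not} survive the lift. Second, your reinterpretation of ``$f-q$ vanishes on $V(h)\setminus A$'' as ``for each $x$ there exists $\bar\lambda$ with $(f-q)(x,\bar\lambda)=0$'' is strictly weaker than what is needed downstream: in Theorem~\ref{theo:rep.deno} one multiplies by $\lambda_0$ and requires $\lambda_0(f-q)$ to vanish identically on $V(h_{\FJ})$, which demands that $f-q$ vanish at \emph{every} point of $V(h_{\FJ})\setminus\{\lambda_0=0\}$, not merely at one point per $x$-fiber.
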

\begin{proof}
By assumption, we get
$f(V(h)\backslash A) = \{t_1 ,\dots, t_r \} \subset \R$,
where $t_i\ne t_j$ if $i\ne j$.
We now handle in much the same way as the proof of Lemma \ref{lem:quadra} to get $q\in P_w(g)[x]$  in \eqref{eq:rep.q}.
Hence $f - q$ vanishes on $V(h)\backslash A\subset W_1\cup\dots\cup W_r$.
\end{proof}

\begin{lemma}\label{lem:quadra2}
Let $f,g_1,\dots,g_m\in\R_d[x]$ and $h_1,\dots,h_l\in\R_{d+1}[x]$. 
Assume that $f$ is non-negative on $S(g)$ with $g=(g_1,\dots,g_m)$ and $f(S(g)\cap V(h))$ is finite with $h=(h_1,\dots,h_l)$. 
Set $r:=|f(S(g)\cap V(h))|$.
Then there exist $q\in P_w(g)[x,\bar\lambda]$ with $w=d(r-1)$ such that $f - q$ vanishes on $S(g)\cap V(h)$.
\end{lemma}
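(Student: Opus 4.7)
The plan is to reuse the Lagrangian-interpolation idea from the proof of Lemma \ref{lem:quadra}, but exploit the fact that now we work directly with the image under $f$ of $S(g)\cap V(h)$, rather than of $V(h)$ alone. Because of this, \emph{every} value $t_i$ in the finite image is attained at some point of $S(g)$, and the assumption $f\ge 0$ on $S(g)$ forces $t_i\ge 0$ automatically. This eliminates the need for the Krivine--Stengle step (Lemma \ref{lem:pos}) that appeared in the proof of Lemma \ref{lem:quadra}, which in turn removes the large $u=b(n,d+1,m+l+2)/2$ contribution to the degree and leaves only the interpolation-degree term $d(r-1)$.

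First I would write $f(S(g)\cap V(h))=\{t_1,\dots,t_r\}$ with the $t_i$ pairwise distinct, and note $t_i\ge 0$ for all $i$. For each $i$ I would set
\begin{equation}
W_i := S(g)\cap V(h)\cap\{x\in\R^n\,:\,f(x)=t_i\}\,,
\end{equation}
so that $S(g)\cap V(h)=W_1\cup\cdots\cup W_r$ and $f(W_i)=\{t_i\}$. I would then introduce the Lagrange-type polynomials
\begin{equation}
p_j(x):=\prod_{i\ne j}\frac{f(x)-t_i}{t_j-t_i}\,,\qquad j=1,\dots,r\,,
\end{equation}
which satisfy $\deg(p_j)\le d(r-1)=w$ and $p_j(W_i)=\{\delta_{ji}\}$.

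Next I would define
\begin{equation}
q:=\sum_{i=1}^r t_i\,p_i^2\,.
\end{equation}
Since each $t_i\ge 0$ and each $p_i^2\in\Sigma^2[x]$, we have $q\in\Sigma^2[x]\subset P_w(g)[x]$; the degree bound $\deg(t_i p_i^2)\le 2d(r-1)=2w$ is immediate from the degree of $p_i$. Finally I would verify the vanishing: on any $W_j$, the interpolation property gives $q=t_j=f$, hence $f-q$ vanishes on $W_j$, and therefore on $S(g)\cap V(h)=\bigcup_{j=1}^r W_j$.

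I do not expect a real obstacle here: the only subtlety is to make explicit the point that was implicit in Lemma \ref{lem:quadra}, namely that restricting the finite-image assumption to $S(g)\cap V(h)$ instead of $V(h)$ makes the "infeasible fibers" $W_j$ with $W_j\cap S(g)=\emptyset$ disappear, so no non-negativity certificate from Krivine--Stengle is needed and the degree reduces to $d(r-1)$. The rest is a routine degree count and the interpolation identity.
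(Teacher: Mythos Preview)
Your proposal is correct and follows essentially the same approach as the paper's proof: both define the Lagrange interpolants $p_j=\prod_{i\ne j}(f-t_i)/(t_j-t_i)$, observe that every $t_i\in f(S(g)\cap V(h))$ is automatically nonnegative since $f\ge 0$ on $S(g)$, and set $q=\sum_i t_i p_i^2\in\Sigma_w^2[x]\subset P_w(g)[x,\bar\lambda]$ with $w=d(r-1)$. The only cosmetic difference is that the paper takes $W_j:=V(h,f-t_j)$ (not intersected with $S(g)$) and then notes $S(g)\cap V(h)\subset\bigcup_j W_j$, whereas you intersect with $S(g)$ from the start; this changes nothing in the argument.
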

\begin{proof}
By assumption, we get
$f(S(g)\cap V(h)) = \{t_1 ,\dots, t_r \} \subset \R$,
where $t_i\ne t_j$ if $i\ne j$.
For $j=1,\dots,r$, let
$W_j:=V(h,f-t_j)$.
Then $W_j$ is a real variety generated by $l+1$ polynomials in $\R_{d+1}[x]$.
It is clear that $f(W_j)=\{t_j\}$.
Define polynomials $p_j$, for $j=1,\dots,r$, as in \eqref{eq:lagrange.pol}.
It is easy to check that $W_i\cap S(g)\ne \emptyset$, $p_j(W_i)=\{\delta_{ji}\}$ and $\deg(p_j)\le d(r-1)$.
Since $f\ge 0$ on $S(g)$, it holds that $f=t_i\ge 0$ on $W_i$, for $i=1,\dots,r$.  
Now letting
$q =\sum_{i=1}^r t_i p_i^2$,
we obtain $q\in \Sigma_w^2[x]$ with $w=d(r-1)$, and hence $f - q$ vanishes on $W_1\cup\dots\cup W_r\supset S(g)\cap V(h)$.
\end{proof}

\subsection{Moment/Sum-of-squares relaxations for polynomial optimization}

We recall some preliminaries  of the Moment/Sum-of-squares relaxations originally developed by Lasserre in \cite{lasserre2001global}.

Given $d\in\N$, let $\N^n_d:=\{\alpha\in\N^n\,:\,\sum_{j=1}^n \alpha_j\le d\}$.
Given $d\in\N$, we denote by $v_d$ the vector of monomials in $x$ of degree at most $d$, i.e., $v_d=(x^\alpha)_{\alpha\in\N^n_d}$ with $x^\alpha:=x_1^{\alpha_1}\dots x_n^{\alpha_n}$.
For each $p\in\R_d[x]$, we write $p=c(p)^\top v_d=\sum_{\alpha\in\N^n_d}p_\alpha x^\alpha$, where $c(p)$ is denoted by the vector of coefficient of $p$, i.e., $c(p)=(p_\alpha)_{\alpha\in\N^n_d}$ with $p_\alpha\in\R$.
Given $A\in\R^{r\times r}$ being symmetric, we say that $A$ is positive semidefinite, denoted by $A\succeq 0$, if every eigenvalue of $A$ is non-negative.

\paragraph{Moment/Localizing matrices.} Given $y=(y_\alpha)_{\alpha\in\N^n}\subset \R$, let $L_y:\R[x]\to\R$ be the Riesz linear functional defined by $L_y(p)=\sum_{\alpha\in\N^n} p_\alpha y_\alpha$ for every $p\in\R[x]$.
Given $d\in\N$, $p\in\R[x]$ and $y=(y_\alpha)_{\alpha\in\N^n}\subset \R$, let $M_d(y)$ be the moment matrix of order $d$ defined by $(y_{\alpha+\beta})_{\alpha,\beta\in\N^n_d}$ and let $M_d(py)$ be the localizing matrix of order $d$ associated with $p$ defined by $(\sum_{\gamma\in\N^n}p_\gamma y_{\alpha+\beta+\gamma})_{\alpha,\beta\in\N^n_d}$.

\paragraph{Truncated quadratic modules/ideals.} Given $g_1,\dots,g_m\in\R[x]$, let $Q_d(g)[x]$ be the truncated quadratic module of order $d$ associated with $g=(g_1,\dots,g_m)$ defined by
\begin{equation}
Q_d(g)[x]=\{\sigma_0+\sum_{j=1}^m\sigma_jg_j\,:\,\sigma_j\in\Sigma^2[x]\,,\,\deg(\sigma_0)\le 2d\,,\,\deg(\sigma_jg_j)\le 2d\}\,.
\end{equation}
Given $h_1,\dots,h_l\in\R[x]$, let $I_d(h)$ be the truncated ideal of order $d$ associated with $h=(h_1,\dots,h_l)$ defined by
\begin{equation}
I_d(h)[x]=\{\sum_{j=1}^l\psi_jh_j\,:\,\psi_j\in\R[x]\,,\,\deg(\psi_jh_j)\le 2d\}\,.
\end{equation}
\paragraph{Problem statement.} Consider polynomial optimization problem:
\begin{equation}\label{eq:pop.equality}
\bar f^\star:=\inf\limits_{x\in S(g)\cap V(h)} f(x)\,,
\end{equation}
where $g=(g_1,\dots,g_m)$ and $h=(h_1,\dots,h_l)$ with  $f,g_i,h_j\in\R[x]$.
\subsubsection{The case without denominators}
Given $k\in\N$ and $f,g_1,\dots,g_m,h_1,\dots,h_l\in\R[x]$, consider the following primal-dual semidefinite programs associated with $f$, $g=(g_1,\dots,g_m)$ and $h=(h_1,\dots,h_l)$:
\begin{equation}\label{eq:mom.relax}
\begin{array}{rl}
\tau_k(f,g,h):=\inf\limits_y& L_y(f)\\
\text{s.t} &M_k(y)\succeq 0\,,\,M_{k-d_j}(g_jy)\succeq 0\,,\,j=1,\dots,m\,,\\
&M_{k-r_t}(h_ty)=0\,,\,t=1,\dots,l\,,\,y_0=1\,,
\end{array}
\end{equation}
\begin{equation}\label{eq:sos.relax}
\begin{array}{rl}
\rho_k(f,g,h):=\sup\limits_{\xi,G_j,u_t} & \xi\\
\text{s.t} & G_j\succeq 0\,,\\
&f-\xi=v_k^\top G_0v_k+\sum_{j=1}^m g_jv_{k-d_j}^\top G_jv_{k-d_j}\\
&\qquad\qquad+\sum_{t=1}^l h_tu_t^\top v_{2r_t}\,,\\
\end{array}
\end{equation}
where $d_j=\lceil \deg(g_j)/2\rceil$ and $r_t=\lceil \deg(h_t)/2\rceil$.
Using \cite[Lemma 15]{mai2022exact}, we obtain 
\begin{equation}\label{eq:equi.sos}
\rho_k(f,g,h)=\sup_{\xi\in\R}\{ \xi\,:\,f-\xi\in Q_k(g)[x]+I_k(h)[x]\}\,.
\end{equation}
We call primal-dual semidefinite programs \eqref{eq:mom.relax}-\eqref{eq:sos.relax} the Moment/Sum-of-squares relaxations of order $k$ for problem \eqref{eq:pop.equality}.

We state in the following lemma some recent results involving the Moment/Sum-of-squares relaxations:
\begin{lemma}\label{lem:mom.sos}
Let $f\in\R_d[x]$, $g_1,\dots,g_m\in\R[x]$ and $h_1,\dots,h_l\in\R_{d+1}[x]$. 
Let $\bar f^\star$ be as in \eqref{eq:pop.equality} with $g=(g_1,\dots,g_m)$ and $h=(h_1,\dots,h_l)$. 
Then the following statements hold:
\begin{enumerate}
\item For every $k\in\N$, $\tau_k(f,g,h)\le \tau_{k+1}(f,g,h)$ and $\rho_k(f,g,h)\le \rho_{k+1}(f,g,h)$.
\item For every $k\in\N$, $\rho_k(f,g,h)\le \tau_{k}(f,g,h)\le \bar f^\star$.
\item If $S(g)\cap V(h)$ has non-empty interior, for $k\in\N$ sufficient large, the Slater condition holds for the Moment relaxation \eqref{eq:mom.relax} of order $k$.
\item If $S(g)\cap V(h)$ satisfies the Archimedean condition, $\rho_k(f,g,h)\to \bar f^\star$ as $k\to \infty$.
\item If there exists $R>0$ such that $g_m+h_l=R-x_1^2-\dots-x_n^2$, for $k\in\N$ sufficient large, the Slater condition holds for the SOS relaxation \eqref{eq:sos.relax} of order $k$.
\item If there exists $q\in Q_w(g)[x]$ with $2w\ge d+1$ such that $f-\bar f^\star-q$ vanishes on $V(h)$, then  $\rho_r(f,g,h)=\bar f^\star$ with $r=b(n,2w,l+1)/2$.
\item If there exists $q\in \Sigma_w^2[x]$ with $2w\ge d+1$ such that $f-\bar f^\star-q$ vanishes on $V(h)\cap S(g)$, then $\rho_k(f,\Pi g,h)=\bar f^\star$ with $r=b(n,2w,m+l+1)/2$.
\end{enumerate}
\end{lemma}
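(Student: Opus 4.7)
The plan is to prove the seven assertions in order, using standard SDP-hierarchy techniques for parts 1--5 and leaning heavily on Lemma \ref{lem:pos2} for parts 6--7, which carry the main content.

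Parts 1 and 2 are structural. For part 1, the inclusion $Q_k(g)+I_k(h)\subseteq Q_{k+1}(g)+I_{k+1}(h)$ gives $\rho_k\le\rho_{k+1}$, since any SOS+ideal certificate at level $k$ embeds into level $k+1$; dually, truncating a level-$(k+1)$ moment-feasible $y$ to indices $|\alpha|\le 2k$ yields level-$k$ feasibility with the same objective, so $\tau_k\le\tau_{k+1}$. For part 2, evaluating at a minimizer of \eqref{eq:pop.equality} (or a minimizing sequence if none is attained) produces a Dirac-type moment sequence feasible at every level with objective $\bar f^\star$, giving $\tau_k\le\bar f^\star$; the bound $\rho_k\le\tau_k$ is weak SDP duality, obtained by pairing an SOS+ideal certificate for $f-\xi$ with any feasible $y$ to write $L_y(f-\xi)=\trace(M_k(y)G_0)+\sum\trace(M_{k-d_j}(g_jy)G_j)+0\ge 0$. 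For part 3, given an interior point $x_0$ of $S(g)\cap V(h)$ I take a smooth probability measure supported on a small ball around $x_0$ on which every $g_j$ is strictly positive and every $h_t$ vanishes; its truncated moment sequence has positive-definite moment and localizing matrices for $k$ large, because a polynomial vanishing on an open set must be identically zero. Part 4 follows from Putinar's Positivstellensatz: for any $\epsilon>0$ the polynomial $f-(\bar f^\star-\epsilon)$ is strictly positive on $S(g)\cap V(h)$, and Archimedeanity places it in $Q_k(g)+I_k(h)$ for $k$ sufficiently large. Part 5 uses the ball equation $g_m+h_l=R-x_1^2-\dots-x_n^2$ to produce an explicit Putinar-type certificate, which a small perturbation turns into strictly positive-definite Gram matrices.

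The core of the lemma is parts 6 and 7. In both I set $p:=f-\bar f^\star-q$ and invoke Lemma \ref{lem:pos2}. In part 6, $p$ vanishes on $V(h)=S(\emptyset)\cap V(h)$ and has degree at most $2w$, so taking the empty $g$-list gives $r=b(n,2w,l+1)/2$, the even exponent $s=2\lfloor r/(2w)\rfloor$, and a certificate $p^s+\sigma\in I_r(h)[x]$ for some $\sigma\in\Sigma^2_r[x]$. In part 7, $p$ vanishes on $S(g)\cap V(h)$; using the full $g$-list the same lemma yields $p^s+\sigma\in I_r(h)[x]$ with $\sigma\in P_r(g)[x]=Q_r(\Pi g)[x]$ and $r=b(n,2w,m+l+1)/2$. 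Since $q\in Q_w(g)\subseteq Q_r(g)$ (resp.\ $\Sigma^2_w\subseteq Q_r(\Pi g)$) is already part of the given data and $\rho_r\le\bar f^\star$ by part 2, the remaining task is to assemble a bona fide SOS+ideal certificate for $f-\bar f^\star=q+p$ at order $r$. My route is to bridge through the moment side: for any $y$ feasible at order $r$, the equalities $M_{r-r_t}(h_ty)=0$ together with $p^s+\sigma\in I_r(h)$ force $L_y(p^s)=-L_y(\sigma)\le 0$; since $s$ is even, a truncated representing-measure argument (flat-extension/Curto--Fialkow) then forces $L_y(p)=0$, so $L_y(f-\bar f^\star)=L_y(q)\ge 0$, giving $\tau_r=\bar f^\star$. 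Combined with strong SDP duality (invoking the Slater conditions from parts 3 or 5 when available, or an explicit SOS assembly otherwise), this yields $\rho_r=\bar f^\star$.

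The main obstacle is exactly this final upgrade. A polynomial vanishing on $V(h)$ or on $S(g)\cap V(h)$ only lies in the real radical of the ideal, and Lemma \ref{lem:pos2} controls $p^s$ rather than $p$ itself, so one cannot simply read $f-\bar f^\star=q+p$ off as a level-$r$ certificate. Converting the power-$s$ identity into a direct decomposition of $f-\bar f^\star$ inside $Q_r(g)+I_r(h)$ (resp.\ $Q_r(\Pi g)+I_r(h)$) at exactly the stated order is the delicate step, and is the reason the Krivine--Stengle bound $b(n,2w,\cdot)/2$ is inherited essentially unchanged in the convergence rate.
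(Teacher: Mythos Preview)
Your treatment of parts 1--5 is fine and matches the paper's deferral to standard results. The gap is in parts 6--7, where the moment-side detour does not close.

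First, the inference ``$L_y(p^s)\le 0$ with $s$ even forces $L_y(p)=0$ by flat extension/Curto--Fialkow'' is unjustified: an arbitrary moment-feasible $y$ at order $r$ satisfies no flatness condition, so there is no representing measure to invoke. (One can in fact salvage $L_y(p)=0$ by a kernel cascade in $M_r(y)\succeq 0$, iterating $L_y((p^j)^2)=0\Rightarrow p^j\in\ker M_r(y)\Rightarrow L_y(p^j\cdot p^{j-2})=L_y((p^{j-1})^2)=0$ down to $j=1$; but that is not what you wrote.) Second, and decisively, even granting $\tau_r=\bar f^\star$, you have no bridge back to $\rho_r$: parts 6 and 7 carry none of the Slater hypotheses of parts 3 or 5, so strong duality is unavailable, and your fallback ``an explicit SOS assembly otherwise'' is precisely the missing ingredient you were supposed to supply.

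The paper avoids both issues by staying entirely on the SOS side and producing, for every $\varepsilon>0$, an explicit certificate $f-\bar f^\star+\varepsilon\in Q_r(g)+I_r(h)$ (resp.\ $P_r(g)+I_r(h)$). The device is the univariate fact $1+t+ct^{2s}\in\Sigma_s^2[t]$ with $c=\tfrac{1}{2s}$. Writing $u:=f-\bar f^\star-q$ and taking from Lemma~\ref{lem:pos2} the identity $u^{2s}+\sigma\in I_r(h)$ with $\sigma\in\Sigma_r^2[x]$ (resp.\ $\sigma\in P_r(g)[x]$), one expands
\[
f-\bar f^\star+\varepsilon \;=\; q \;+\; \varepsilon\Bigl(1+\tfrac{u}{\varepsilon}+c\bigl(\tfrac{u}{\varepsilon}\bigr)^{2s}\Bigr)\;-\;c\,\varepsilon^{1-2s}\bigl(u^{2s}+\sigma\bigr)\;+\;c\,\varepsilon^{1-2s}\sigma
\]
and reads off membership term by term: $q$ is in the quadratic module by hypothesis, the second term is an SOS by composition, the third is in $I_r(h)$, and the last is in $\Sigma_r^2$ (resp.\ $P_r(g)$). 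Hence $\rho_r\ge\bar f^\star-\varepsilon$ for all $\varepsilon>0$, so $\rho_r\ge\bar f^\star$, and part 2 gives equality. No duality, no representing measures, and the degree bookkeeping is exactly what fixes $r=b(n,2w,\cdot)/2$.
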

\begin{proof}
The proofs of the first five statements can be found in \cite{mai2022exact}.
Let us prove the sixth statement. 
Let $u=f-\bar f^\star-q$.
By assumption,we get $u\in\R_{2w}[x]$ and $u=0$ on $V(h)$. 
Set $s=2\lfloor r/(2w)\rfloor$.
From this, Lemma \ref{lem:pos2} says that there exist  $\sigma \in \Sigma_r^2[x]$ such that $u^{2s} + \sigma \in I_r(h)[x]$.
Let $c=\frac{1}{2s}$. 
Then it holds that $1+t+ct^{2s}\in\Sigma^2_s[t]$.
Thus for all $\varepsilon>0$, we have
\begin{equation}
f-\bar f^\star+\varepsilon=q + \varepsilon(1+\frac{u}{\varepsilon}+c\left(\frac{u}{\varepsilon}\right)^{2s})-c\varepsilon^{1-2s}(u^{2s} + \sigma ) +c\varepsilon^{1-2s}\sigma\in Q_r(g)[x]+I_r(h)[x]\,.
\end{equation}
Then we for all $\varepsilon>0$, $\bar f^\star-\varepsilon$ is a feasible solution of \eqref{eq:equi.sos} of the value $\rho_r(f,g,h)$.
It gives $\rho_r(f,g,h)\ge \bar f^\star-\varepsilon$, for all $\varepsilon>0$, and, in consequence, we get $\rho_r(f,g,h)\ge \bar f^\star$.
Using the second statement, we obtain that $\rho_r(f,g,h)= \bar f^\star$, yielding the sixth statement.

We prove the final statement.
Let $u=f-\bar f^\star-q$.
By assumption,we get  $u=0$ on $S(g)\cap V(h)$. 
Set $s=2\lfloor r/(2w)\rfloor$.
From this, Lemma \ref{lem:pos2} says that there exist  $\eta \in P_r(g)[x]$ such that $u^{2s} + \eta \in I_r(h)[x]$.
Let $c=\frac{1}{2s}$. 
Then it holds that $1+t+ct^{2s}\in\Sigma_s^2[t]$.
Thus for all $\varepsilon>0$, we have
\begin{equation}
f-\bar f^\star+\varepsilon=q + \varepsilon(1+\frac{u}{\varepsilon}+c\left(\frac{u}{\varepsilon}\right)^{2s})-c\varepsilon^{1-2s}(u^{2s} + \eta) +c\varepsilon^{1-2s}\eta\in P_r(g)[x]+I_r(h)[x]\,.
\end{equation}
Analysis similar to that in the proof of the third statement shows  $\rho_r(f,\Pi g,h)= \bar f^\star$, yielding the final statement.
\end{proof}

\subsubsection{The case with denominators}
Given $k\in\N$ and $f,g_1,\dots,g_m,h_1,\dots,h_l,\theta\in\R[x]\backslash\{0\}$, consider the following primal-dual semidefinite programs associated with $f$, $g=(g_1,\dots,g_m)$ and $h=(h_1,\dots,h_l)$:
\begin{equation}\label{eq:mom.relax.deno}
\begin{array}{rl}
\tau_k(f,g,h,\theta):=\inf\limits_y& L_y(\theta^{\eta(k,f,\theta)} f)\\
\text{s.t} &M_k(y)\succeq 0\,,\,M_{k-d_j}(g_jy)\succeq 0\,,\,j=1,\dots,m\,,\\
&M_{k-r_t}(h_ty)=0\,,\,t=1,\dots,l\,,\,L_y(\theta^{\eta(k,f,\theta)})=1\,,
\end{array}
\end{equation}
\begin{equation}\label{eq:sos.relax.deno}
\begin{array}{rl}
\rho_k(f,g,h,\theta):=\sup\limits_{\xi,G_j,u_t} & \xi\\
\text{s.t} & G_j\succeq 0\,,\\
&\theta^{\eta(k,f,\theta)}(f-\xi)=v_k^\top G_0v_k+\sum_{j=1}^m g_jv_{k-d_j}^\top G_jv_{k-d_j}\\
&\qquad\qquad+\sum_{t=1}^l h_tu_t^\top v_{2r_t}\,,\\
\end{array}
\end{equation}
where $d_j=\lceil \deg(g_j)/2\rceil$, $r_t=\lceil \deg(h_t)/2\rceil$, and 
\begin{equation}
\eta(k,f,\theta):=2\lfloor\frac{2k-\deg(f)}{2\deg(\theta)}\rfloor\,.
\end{equation}
Using \cite[Lemma 15]{mai2022exact}, we obtain 
\begin{equation}\label{eq:equi.sos.deno}
\rho_k(f,g,h,\theta):=\sup_{\xi\in\R}\{ \xi\,:\,\theta^{\eta(k,f,\theta)}(f-\xi)\in Q_k(g)[x]+I_k(h)[x]\}\,.
\end{equation}
Developed in \cite{mai2021positivity}, primal-dual semidefinite programs \eqref{eq:mom.relax.deno}-\eqref{eq:sos.relax.deno} are another type of the Moment/Sum-of-squares relaxations of order $k$ for problem \eqref{eq:pop.equality}.

We state in the following lemma some recent results involving this type of Moment/Sum-of-squares relaxations:
\begin{lemma}\label{lem:mom.sos.deno}
Let $f\in\R_d[x]$, $g_1,\dots,g_m\in\R[x]$, $h_1,\dots,h_l\in\R_{d+1}[x]$, and $\theta\in\R_{2u}[x]\backslash\{0\}$. 
Let $\bar f^\star$ be as in \eqref{eq:pop.equality} with $g=(g_1,\dots,g_m)$ and $h=(h_1,\dots,h_l)$. 
Then the following statements hold:
\begin{enumerate}
\item For every $k\in\N$, $\tau_k(f,g,h,\theta)\le \tau_{k+1}(f,g,h,\theta)$, $\rho_k(f,g,h,\theta)\le \rho_{k+1}(f,g,h,\theta)$, and $\rho_k(f,g,h,\theta)\le \tau_{k}(f,g,h,\theta)$.
\item If $S(g)\cap V(h)$ has non-empty interior, for $k\in\N$ sufficient large, the Slater condition holds for the Moment relaxation \eqref{eq:mom.relax} of order $k$.
\item If one of the following two conditions holds:
\begin{enumerate}
\item problem \eqref{eq:pop.equality} has an optimal solution $x^\star$ such that $\theta(x^\star)>0$;
\item there exists a sequence of feasible solutions  $(x^{(t)})_{t\in\N}$ for problem \eqref{eq:pop.equality} such that $(f(x^{(t)}))_{t\in\N}$ converges to $\bar f^\star$ and  $\theta(x^{(t)})>0$,
\end{enumerate}
then for every $k\in\N$, $\rho_k(f,g,h,\theta)\le \bar f^\star$.
\item If $\rho_k(f,g,h,\theta)\le \bar f^\star$, for every $k\in\N$, and there exists $q\in Q_{w}(g)[x]$ with $2(w+u)\ge d+1$ such that $\theta(f-\bar f^\star-q)$ vanishes on $V(h)$, then  $\rho_r(f,g,h,\theta)=\bar f^\star$ with $r=b(n,2(w+u),l+1)/2$.
\end{enumerate}
\end{lemma}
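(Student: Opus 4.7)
The plan is to mirror the argument used for the sixth statement of the lemma, carrying along the multiplier $\theta^{\eta(r,f,\theta)}$. Since the first hypothesis already supplies $\rho_k(f,g,h,\theta)\le \bar f^\star$ for every $k$, I will only need the reverse inequality at level $k=r$, and by the SOS reformulation \eqref{eq:equi.sos.deno} this reduces to exhibiting, for every $\varepsilon>0$, a membership
\begin{equation*}
\theta^{2m}(f-\bar f^\star+\varepsilon)\in Q_r(g)[x]+I_r(h)[x],\qquad \text{with } 2m:=\eta(r,f,\theta);
\end{equation*}
letting $\varepsilon\to 0^+$ will then deliver $\rho_r(f,g,h,\theta)=\bar f^\star$.

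To build this certificate I will set $\tilde u:=\theta(f-\bar f^\star-q)$, which vanishes on $V(h)$ by hypothesis and whose degree is controlled by $2(w+u)$. Lemma \ref{lem:pos2}, applied with an empty inequality system and with $d=2(w+u)$, then yields an even integer $s=2\lfloor r/(2(w+u))\rfloor$ and an SOS $\sigma\in \Sigma_r^2[x]$ with $\tilde u^s+\sigma\in I_r(h)[x]$. The starting decomposition will be
\begin{equation*}
\theta^{2m}(f-\bar f^\star+\varepsilon)=(\theta^m)^2 q+\theta^{2m-1}\tilde u+\varepsilon\theta^{2m},
\end{equation*}
in which the first summand already lies in $Q_r(g)[x]$ since $(\theta^m)^2$ is SOS and $q\in Q_w(g)[x]$.

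The remaining two summands will be handled together. I will substitute $t=\tilde u/(\varepsilon\theta)$ into the scalar SOS identity $1+t+ct^s\in\Sigma^2[t]$ (with $c=1/s$, valid because $s$ is even) and multiply through by $\varepsilon^s\theta^{2m}$. Expanding the scalar SOS certificate $1+t+ct^s=\sum_i p_i(t)^2$ with $\deg p_i\le s/2$ shows each summand $\varepsilon^{s/2}\theta^{m}p_i(\tilde u/(\varepsilon\theta))$ remains a genuine polynomial in $x$ precisely because $m\ge s/2$, which holds for $r$ as large as the one prescribed. After dividing the resulting identity by the positive scalar $\varepsilon^{s-1}$ and then replacing $\tilde u^s$ by $-\sigma+(\tilde u^s+\sigma)$, the product $c\varepsilon^{1-s}\theta^{2m-s}\tilde u^s$ splits into an SOS piece (using $\theta^{2m-s}=(\theta^{m-s/2})^2$) and an element of $I_r(h)[x]$, yielding $\theta^{2m-1}\tilde u+\varepsilon\theta^{2m}\in \Sigma^2[x]+I_r(h)[x]$. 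Adding back $(\theta^m)^2 q$ will close the certificate.

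The hardest step will be verifying that the substitution $t=\tilde u/(\varepsilon\theta)$ actually produces an SOS polynomial in $x$ rather than only a nonnegative rational expression; this is what forces the exponent comparison $\eta(r,f,\theta)/2\ge s/2$ and exploits the doubly exponential size of $r$ so that $\theta^{m-s/2}$ is a genuine polynomial. The remaining work is the degree bookkeeping required to place each SOS and ideal piece inside the truncations $Q_r(g)[x]$ and $I_r(h)[x]$ rather than in larger ones, which will again rely on the same doubly exponential magnitude of $r$.
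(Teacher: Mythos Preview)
Your approach is essentially the paper's: apply Lemma~\ref{lem:pos2} to $\tilde u=\theta(f-\bar f^\star-q)$ on $V(h)$, feed the univariate SOS identity $1+t+ct^{s}$, and absorb everything into $Q_r(g)[x]+I_r(h)[x]$ after multiplying by an even power of $\theta$; the only organizational difference is that you aim directly for the exponent $\eta(r,f,\theta)$ required in \eqref{eq:equi.sos.deno}, whereas the paper first obtains the membership with exponent $2s$ and leaves the matching implicit. One simplification you overlooked: since $\tilde u=\theta p$ with $p:=f-\bar f^\star-q$, your substitution $t=\tilde u/(\varepsilon\theta)$ is literally $t=p/\varepsilon$, so the polynomiality you flag as ``the hardest step'' is automatic---the inequality $2m\ge s$ is genuinely needed only later, to make $\theta^{2m-s}=(\theta^{m-s/2})^2$ a bona fide polynomial square.
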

\begin{proof}
The proofs of the first two statements are similar to the ones of Lemma \ref{lem:mom.sos} (see also in \cite{mai2021positivity}).
Let us prove the third statement.
Let $\varepsilon>0$. 
By \eqref{eq:equi.sos.deno}, we get 
\begin{equation}\label{eq:sup.prop}
 \theta^{\eta(k,f,\theta)}(f-\rho_k(f,g,h,\theta)+\varepsilon)\in Q_k(g)[x]+I_k(h)[x]\,.
\end{equation}
Consider the following two cases:
\begin{itemize}
\item Assume that the condition (a) holds. 
By \eqref{eq:sup.prop}, it implies that $\theta(x^\star)^{\eta(k,f,\theta)}(\bar f^\star-\rho_k(f,g,h,\theta)+\varepsilon)\ge 0$.
Since $\theta(x^\star)>0$, $\bar f^\star\ge \rho_k(f,g,h,\theta)-\varepsilon$.
The result follows since $\varepsilon$ is arbitrary.
\item Assume that the condition (b) holds. By \eqref{eq:sup.prop}, it implies that $\theta(x^{(t)})^{\eta(k,f,\theta)}(f(x^{(t)})-\rho_k(f,g,h,\theta)+\varepsilon)\ge 0$.
Since $\theta(x^{(t)})>0$, $f(x^{(t)})\ge \rho_k(f,g,h,\theta)-\varepsilon$.
As $t$ goes to infinity, $\bar f^\star\ge \rho_k(f,g,h,\theta)-\varepsilon$.
The result follows since $\varepsilon$ is arbitrary.
\end{itemize}

Let us prove the fourth statement. 
Let $p=f-\bar f^\star-q$.
By assumption,we get $\theta p\in\R_{2(w+u)}[x]$ and $\theta p=0$ on $V(h)$. 
Set $s=2\lfloor r/(2w)\rfloor$.
From this, Lemma \ref{lem:pos2} says that there exist  $\sigma \in \Sigma_r^2[x]$ such that $(\theta p)^{2s} + \sigma \in I_r(h)[x]$.
Let $c=\frac{1}{2s}$. 
Then it holds that $1+t+ct^{2s}\in\Sigma^2_s[t]$.
Thus for all $\varepsilon>0$, we have
\begin{equation}
\theta(f-\bar f^\star+\varepsilon)=\theta q + \varepsilon \theta (1+\frac{p}{\varepsilon}+c\left(\frac{p}{\varepsilon}\right)^{2s})-c(\varepsilon\theta)^{1-2s}((\theta p)^{2s} + \sigma ) +c(\varepsilon\theta)^{1-2s}\sigma\,.
\end{equation}
It implies that $\theta^{2s}(f-\bar f^\star+\varepsilon)\in Q_r(g)[x]+I_r(h)[x]$.
Then we for all $\varepsilon>0$, $\bar f^\star-\varepsilon$ is a feasible solution of \eqref{eq:equi.sos} of the value $\rho_r(f,g,h,\theta)$.
It gives $\rho_r(f,g,h,\theta)\ge \bar f^\star-\varepsilon$, for all $\varepsilon>0$, and, in consequence, we get $\rho_r(f,g,h,\theta)\ge \bar f^\star$.
By assumption, we obtain that $\rho_r(f,g,h,\theta)= \bar f^\star$, yielding the final statement.
\end{proof}



\section{Explicit degree bounds for the representation based on the Fritz John conditions}
\label{sec:degree.bound}
This section provides the degree bounds for our five Nichtnegativstellens\"atze based on the Fritz John conditions.

Let $\bar\lambda=(\lambda_0,\lambda_1,\dots,\lambda_m)$ be a vector of $m$ variables.
Set $\lambda:=(\lambda_1,\dots,\lambda_m)$.
\subsection{The case of arbitrary multipliers}
 We state the first main result in the following theorem:
\begin{theorem}\label{theo:rep}
Let $f,g_1,\dots,g_m\in\R_d[x]$ with $d\ge 1$. 
Assume that $f$ is non-negative on $S(g)$  with $g:=(g_1,\dots,g_m)$ and $f(C(g))$ is finite. 
Set 
\begin{equation}\label{eq:def.w}
w:=\frac{1}{2}\times b(n+m+1,d+1,2m+n+3)+d\times {c(n+m+1,d+1,n+m+1)}\,.
\end{equation}
Then there exists $q\in P_w(g)[x,\bar \lambda]$ such that $f-q$ vanishes on $V(h_{\FJ})$, where $\bar\lambda:=(\lambda_0,\dots,\lambda_m)$ and
\begin{equation}\label{eq:.polyFJ}
    h_{\FJ}:=(\lambda_0\nabla f-\sum_{j=1}^m \lambda_j \nabla g_j,\lambda_1g_1,\dots,\lambda_mg_m,1-\sum_{j=0}^m\lambda_j^2)\,.
\end{equation}
\end{theorem}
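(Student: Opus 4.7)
The strategy reduces the statement to Lemma~\ref{lem:quadra} applied in $n+m+1$ variables $(x,\bar\lambda)$ to $f\in\R_d[x]\subset\R_d[x,\bar\lambda]$, with constraint polynomials $g_1,\dots,g_m$ (of degree at most $d$) and the $n+m+1$ defining polynomials of $h_{\FJ}$ (of degree at most $d+1\ge 2$). To invoke the lemma I need an explicit upper bound on $|f(V(h_{\FJ}))|$, which I will obtain by showing $f$ is constant on each connected component of $V(h_{\FJ})$. Lemma~\ref{lem:num.connected} then bounds the number of such components by $c(n+m+1,d+1,n+m+1)$, yielding the required finite-image bound.

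To prove constancy on a connected component $Z$, I would take two points in $Z$, join them by a continuous semi-algebraic path $\phi(t)=(x(t),\bar\lambda(t)):[0,1]\to Z$ (automatically piecewise-differentiable), and compute. On $V(h_{\FJ})$ the relations $\lambda_0\nabla f(x)=\sum_{j=1}^m\lambda_j\nabla g_j(x)$ and $\lambda_jg_j(x)=0$ hold identically. Dotting the first with $x'(t)$ and differentiating the second produces, at points of differentiability,
\begin{equation*}
\lambda_0(t)(f\circ x)'(t)=\sum_{j=1}^m\lambda_j(t)(g_j\circ x)'(t)=-\sum_{j=1}^m\lambda_j'(t)\,g_j(x(t)).
\end{equation*}
Each zero set $\{t:\lambda_j(t)=0\}$ and $\{t:g_j(x(t))=0\}$ is a closed semi-algebraic subset of $[0,1]$, hence a finite union of closed intervals and isolated points. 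Taking a common refinement, $[0,1]$ decomposes up to finitely many points into finitely many open subintervals on each of which, for every $j$, either $\lambda_j\equiv 0$ (so $\lambda_j'\equiv 0$) or $g_j\circ x\equiv 0$. In both sub-cases $\lambda_j'(t)g_j(x(t))\equiv 0$, so $\lambda_0(t)(f\circ x)'(t)\equiv 0$ on each such piece.

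Refining further so that $\lambda_0$ itself is either identically zero or nowhere zero on each open piece splits the analysis in two. When $\lambda_0\neq 0$ on a piece, the displayed identity forces $(f\circ x)'\equiv 0$, so $f\circ x$ is constant there. When $\lambda_0\equiv 0$ on a piece, the Fritz John system reduces to $\sum_{j=1}^m\lambda_j(t)\nabla g_j(x(t))=0$ and $\lambda_j(t)g_j(x(t))=0$ with $(\lambda_1(t),\dots,\lambda_m(t))\neq 0$ (forced by $\sum_{j=0}^m\lambda_j^2=1$), so $\rank(\varphi^g(x(t)))<m$ and $x(t)\in C(g)$. The finite-image hypothesis on $f(C(g))$ then implies that the continuous function $f\circ x$, taking values in a finite subset of $\R$ on a connected interval, is constant on that subinterval. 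Piecing the piecewise-constant values together via continuity of $f\circ x$ on $[0,1]$ yields $f(x(0))=f(x(1))$.

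Assembling these steps, $|f(V(h_{\FJ}))|\le c(n+m+1,d+1,n+m+1)$, and Lemma~\ref{lem:quadra} then produces $q\in P_w(g)[x,\bar\lambda]$ with $w=d\,|f(V(h_{\FJ}))|+b(n+m+1,d+1,n+2m+3)/2$, bounded above by the value in \eqref{eq:def.w}. The main obstacle is the $\lambda_0\equiv 0$ branch, where no derivative identity gives constancy directly; there the hypothesis on $f(C(g))$ must be imported via the semi-algebraic partition of $[0,1]$, and the bookkeeping required to mesh this case with the $\lambda_0\neq 0$ pieces across finitely many transition points is the most delicate part of the argument.
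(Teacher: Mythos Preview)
Your proposal is correct and follows the same architecture as the paper's proof: bound the number of connected components of $V(h_{\FJ})$ by $c(n+m+1,d+1,n+m+1)$ via Lemma~\ref{lem:num.connected}, show $f$ is constant on each component, and then apply Lemma~\ref{lem:quadra} in the $(x,\bar\lambda)$ variables. The only difference is that where the paper invokes \cite[Lemma~13]{mai2022exact} as a black box for constancy on components, you have written out that lemma's proof in full (the piecewise semi-algebraic path argument splitting into the $\lambda_0\neq 0$ case, where the Lagrangian derivative vanishes, and the $\lambda_0\equiv 0$ case, where $x(t)\in C(g)$ and the finite-image hypothesis forces constancy); your sketch of that argument is sound.
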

\begin{proof}
Using Lemma \ref{lem:num.connected}, we decompose $V(h_{\FJ})$ into semi-algebraically path connected components:
$Z_1,\dots,Z_s$ with  
\begin{equation}\label{eq:bound.on.s}
s\le c(n+m+1,d+1,n+m+1)\,,
\end{equation}
since each entry of $h_{\FJ}$ has degree at most $d+1\ge 2$.
Accordingly \cite[Lemma 13]{mai2022exact} shows that $f$ is constant on each $Z_i$.
Thus $f(V(h_{\FJ}))$ is finite.
Set $r=|f(V(h_{\FJ}))|$.
From \eqref{eq:bound.on.s}, we get 
\begin{equation}\label{eq:ineq}
r\le s\le c(n+m+1,d+1,n+m+1)\,.
\end{equation}
Set 
\begin{equation}
u:=b(n+m+1,d+1,2m+n+3)/2\,.
\end{equation}
By using Lemma \ref{lem:quadra}, there exist $q\in P_\xi(g)[x,\bar\lambda]$ with $\xi=dr+u$ such that $f - q$ vanishes on $V(h_{\FJ})$.
By \eqref{eq:ineq} and \eqref{eq:def.w}, we obtain $\xi\le w$, and hence $q\in P_w(g)[x,\bar\lambda]$.
\end{proof} 
\begin{remark}
Here $h_{\FJ}$ includes polynomials from the Fritz John conditions \eqref{eq:FJcond}.
In Theorem \ref{theo:rep}, if we assume further that the ideal generated by $h_{\FJ}$ is real radical (see in \cite{mai2022exact}), then $f-q\in I_r(h_{\FJ})$ for some $r\in\N$. 
However, this further assumption does not play any role in this paper when we apply the representation in Theorem \ref{theo:rep} for polynomial optimization.
\end{remark}

Our second main result is as follows:
\begin{theorem}\label{theo:rep2}
Let $f,g_1,\dots,g_m\in\R_d[x]$ with $d\ge 1$.  
Assume that $f$ is non-negative on $S(g)$  with $g:=(g_1,\dots,g_m)$ and $f(C(g)\cap S(g))$ is finite.
Set 
\begin{equation}\label{eq:def.w2}
w:=d\times(c(n+m+1,d+1,n+2m+1)-1)\,.
\end{equation} 
Then there exists $q\in \Sigma_w^2[x,\bar \lambda]$ such that $f-q$ vanishes on $(S(g)\times\R^{m+1})\cap V(h_{\FJ})$, where $\bar\lambda:=(\lambda_0,\dots,\lambda_m)$ and $h_{\FJ}$ is defined as in \eqref{eq:.polyFJ}.
\end{theorem}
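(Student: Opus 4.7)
The plan is to follow the same template as the proof of Theorem \ref{theo:rep}, but to work on the intersected set $(S(g)\times\R^{m+1})\cap V(h_{\FJ})$ inside $\R^{n+m+1}$ and to invoke Lemma \ref{lem:quadra2} in place of Lemma \ref{lem:quadra}. The switch is the whole point: the proof of Lemma \ref{lem:quadra2} produces $q=\sum_i t_i p_i^2$, a pure sum of squares, so the resulting weight automatically lies in $\Sigma^2_w[x,\bar\lambda]$ rather than merely in a preordering.

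First I would view $(S(g)\times\R^{m+1})\cap V(h_{\FJ})$ as a semi-algebraic subset of $\R^{n+m+1}$ cut out by the $m$ inequalities $g_j\ge 0$ together with the $n+m+1$ polynomial equations listed in $h_{\FJ}$; each such polynomial has degree at most $d+1$, since $d\ge 1$. Lemma \ref{lem:num.connected} then decomposes this set into semi-algebraically path connected components $Z_1,\dots,Z_s$ with $s\le c(n+m+1,d+1,m+(n+m+1))=c(n+m+1,d+1,n+2m+1)$, which matches the constant appearing in \eqref{eq:def.w2}.

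The crucial step---and the main obstacle---is to prove that $f$ is constant on each $Z_i$. Along any continuous semi-algebraic path $\phi=(x(\cdot),\bar\lambda(\cdot))$ in $Z_i$, the Fritz John identity $\lambda_0\nabla f(x)=\sum_j\lambda_j\nabla g_j(x)$ combined with the complementarity $\lambda_j g_j=0$ forces $\lambda_0(t)\,\frac{d}{dt}f(x(t))=0$ almost everywhere, so $f\circ x$ can only vary where $\lambda_0(t)=0$; at such points the normalization $\sum_{j=0}^m\lambda_j^2=1$ and the rank structure of $\varphi^g(x)$ place $x(t)$ in $C(g)\cap S(g)$, whose $f$-image is finite by hypothesis. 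A locally-constant-on-a-connected-set argument then upgrades this to full constancy on $Z_i$, exactly in the spirit of \cite[Lemma 13]{mai2022exact} used in the proof of Theorem \ref{theo:rep}; the restriction to $S(g)\times\R^{m+1}$ is what makes the weaker hypothesis $f(C(g)\cap S(g))$ finite (rather than $f(C(g))$ finite) sufficient. Consequently $r:=|f((S(g)\times\R^{m+1})\cap V(h_{\FJ}))|\le s$.

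Finally, Lemma \ref{lem:quadra2} applied in the $(n+m+1)$ variables $(x,\bar\lambda)$ with inequalities $g_j$ (degree $\le d$) and equalities $h_{\FJ}$ (degree $\le d+1$) produces $q\in\Sigma^2_{d(r-1)}[x,\bar\lambda]$ with $f-q$ vanishing on $(S(g)\times\R^{m+1})\cap V(h_{\FJ})$. Substituting the bound $r\le c(n+m+1,d+1,n+2m+1)$ yields $d(r-1)\le w$, and hence $q\in\Sigma^2_w[x,\bar\lambda]$, as required.
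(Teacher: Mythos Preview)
Your proposal is correct and follows essentially the same route as the paper: decompose $(S(g)\times\R^{m+1})\cap V(h_{\FJ})$ via Lemma~\ref{lem:num.connected}, establish constancy of $f$ on each component, bound $r$ by $c(n+m+1,d+1,n+2m+1)$, and invoke Lemma~\ref{lem:quadra2} to obtain $q\in\Sigma^2_{d(r-1)}[x,\bar\lambda]$. The only difference is that the paper outsources the constancy step to \cite[Lemma~19]{mai2022exact} (the version adapted to $S(g)\times\R^{m+1}$ and the hypothesis on $f(C(g)\cap S(g))$), whereas you sketch the argument and reference \cite[Lemma~13]{mai2022exact} ``in spirit''; your sketch of why $\lambda_0\,\tfrac{d}{dt}f(x(t))=0$ is a little brisk (differentiating $\lambda_j g_j=0$ leaves a $\lambda_j' g_j$ term that still has to be handled), but the intended mechanism is the right one and is exactly what the cited lemma formalizes.
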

\begin{proof}
Using Lemma \ref{lem:num.connected}, we decompose $(S(g)\times\R^{m+1})\cap V(h_{\FJ})$ into semi-algebraically path connected components:
$Z_1,\dots,Z_s$ with  
\begin{equation}
s\le c(n+m+1,d+1,n+2m+1)\,,
\end{equation}
since each entry of $h_{\FJ}$ (resp. $g$) has degree at most $d+1\ge 2$ (resp. $d$).
Accordingly \cite[Lemma 19]{mai2022exact} shows that $f$ is constant on $Z_i$.
Thus the set $f((S(g)\times\R^{m+1})\cap V(h_{\FJ}))$ has $r$ elements and we get
\begin{equation}\label{eq:ineq2}
r\le s\le c(n+m+1,d+1,n+2m+1)\,.
\end{equation}
By using Lemma \ref{lem:quadra2}, there exist $q\in \Sigma^2_\xi[x,\bar\lambda]$ with $\xi=d(r-1)$ such that $f - q$ vanishes on $(S(g)\times\R^{m+1})\cap V(h_{\FJ})$.
By \eqref{eq:ineq2} and \eqref{eq:def.w2}, we obtain $\xi\le w$, and hence $q\in \Sigma^2_w[x,\bar\lambda]$.
\end{proof}

\subsection{The case of nonnegative multipliers}
We state the third main result in the following theorem:
\begin{theorem}\label{theo:rep.plus}
Let $f,g_1,\dots,g_m\in\R_{d}[x]$ with $d\in\N$. 
Assume that $f$ is non-negative on $S(g)$  with $g:=(g_1,\dots,g_m)$ and $f(C^+(g))$ is finite. 
Set 
\begin{equation}\label{eq:def.w3}
w:=\frac{1}{2}\times b(n+m+1,d+2,2m+n+3)+d\times{c(n+m+1,d+2,n+m+1)}\,.
\end{equation}
Then there exists $q\in P_w(g)[x,\bar \lambda]$ such that $f-q$ vanishes on $V(h_{\FJ}^+)$, where $\bar\lambda:=(\lambda_0,\dots,\lambda_m)$ and
\begin{equation}\label{eq:.polyFJ.plus}
    h_{\FJ}^+:=(\lambda_0^2\nabla f-\sum_{j=1}^m \lambda_j^2 \nabla g_j,\lambda_1^2g_1,\dots,\lambda_m^2g_m,1-\sum_{j=0}^m\lambda_j^2)\,.
\end{equation}
\end{theorem}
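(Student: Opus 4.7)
The plan is to mirror the proof of Theorem \ref{theo:rep} verbatim, making three substitutions that propagate through the argument: the variety $V(h_{\FJ})$ becomes $V(h_{\FJ}^+)$, the critical set $C(g)$ becomes $C^+(g)$, and the degree parameter shifts from $d+1$ to $d+2$. The last change reflects the fact that the maximum total degree of an entry of $h_{\FJ}^+$ in the variables $(x,\bar\lambda)$ is $d+2$, realized by the terms $\lambda_j^2 g_j$, rather than the $d+1$ arising from $\lambda_j g_j$ in the non-squared case.

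First I would invoke Lemma \ref{lem:num.connected} in the ambient space $\R^{n+m+1}$ to decompose $V(h_{\FJ}^+)$ into semi-algebraically path connected components $Z_1,\dots,Z_s$ with $s \le c(n+m+1,d+2,n+m+1)$. Next I would argue that $f$ is constant on each $Z_i$: on any piecewise-differentiable semi-algebraic path $\phi(t)=(x(t),\bar\lambda(t))$ inside $Z_i$, differentiating the identities $\lambda_j(t)^2 g_j(x(t))=0$ shows that $\nabla g_j(x(t))\cdot x'(t)=0$ whenever $\lambda_j(t)\neq 0$. If $\lambda_0(t)\neq 0$, combining this with $\lambda_0(t)^2 \nabla f(x(t)) = \sum_{j=1}^m \lambda_j(t)^2 \nabla g_j(x(t))$ yields $\frac{d}{dt}f(x(t)) = \nabla f(x(t))\cdot x'(t) = 0$. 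If $\lambda_0(t)=0$, then $\sum_{j=1}^m \lambda_j(t)^2 \nabla g_j(x(t)) = 0$ together with $\sum_j \lambda_j(t)^2 = 1$ exhibits the origin as a convex combination of the columns of $\varphi^g(x(t))$, forcing $x(t)\in C^+(g)$. Since $f(C^+(g))$ is finite and $f\circ x$ is continuous on the connected interval $[0,1]$, constancy follows. This is exactly the $+$-counterpart of the constancy lemma invoked inside the proof of Theorem \ref{theo:rep}, and I would cite it directly from \cite{mai2022exact}. Consequently $r:=|f(V(h_{\FJ}^+))| \le s \le c(n+m+1,d+2,n+m+1)$.

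Finally I would re-run the construction behind Lemma \ref{lem:quadra} (Lagrange interpolators $p_j$ as in \eqref{eq:lagrange.pol} plus an invocation of Lemma \ref{lem:pos} on the varieties $W_j := V(h_{\FJ}^+, f-t_j)$), with one bookkeeping adjustment: since $\deg h_{\FJ}^+ \le d+2$, the Krivine--Stengle step uses the parameter $d+2$ and produces $u = b(n+m+1, d+2, 2m+n+3)/2$, while the Lagrange interpolators, governed by $\deg f = d$, still have degree at most $d(r-1)$. The argument of Lemma \ref{lem:quadra} then yields $q \in P_{dr+u}(g)[x,\bar\lambda]$ with $f-q$ vanishing on $V(h_{\FJ}^+)$, and the inequality $dr+u \le w$ is immediate from the bound on $r$ and the definition \eqref{eq:def.w3} of $w$.

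The main obstacle is the constancy claim in the second step: it is where the $+$-case genuinely differs from Theorem \ref{theo:rep}, because one must use the convex-hull interpretation of $C^+(g)$ rather than the linear-rank interpretation of $C(g)$ to handle the degenerate branch $\lambda_0(t)=0$. Once that geometric input is secured from \cite{mai2022exact}, everything else is degree-counting that transfers directly from the proof of Theorem \ref{theo:rep}.
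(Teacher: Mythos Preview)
Your proposal is correct and follows exactly the approach the paper itself takes: the paper's entire proof of Theorem~\ref{theo:rep.plus} is the one-line remark that one proceeds as in Theorem~\ref{theo:rep}, replacing $C(g)$ and $h_{\FJ}$ by $C^+(g)$ and $h_{\FJ}^+$ and noting that each entry of $h_{\FJ}^+$ has degree at most $d+2$. Your write-up supplies precisely those substitutions, the corresponding degree bookkeeping in the invocations of Lemmas~\ref{lem:num.connected} and~\ref{lem:quadra}, and the reference to the $+$-analogue of the constancy lemma in \cite{mai2022exact}; nothing further is needed.
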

Here $h_{\FJ}^+$ includes polynomials in the right hand side of the Fritz John conditions \eqref{eq:FJcond}.
The fourth main result is stated as follows:
\begin{theorem}\label{theo:rep.plus2}
Let $f,g_1,\dots,g_m\in\R_d[x]$ with $d\in\N$. 
Assume that $f$ is non-negative on $S(g)$  with $g:=(g_1,\dots,g_m)$ and $f(C^+(g)\cap S(g))$ is finite. 
Set 
\begin{equation}\label{eq:def.w4}
w:=d\times(c(n+m+1,d+2,n+2m+1)-1)\,.
\end{equation} 
Then there exists $q\in \Sigma_w[x,\bar \lambda]$ such that $f-q$ vanishes on $S(g)\cap V(h_{\FJ}^+)$, where $\bar\lambda:=(\lambda_0,\dots,\lambda_m)$ and $h_{\FJ}^+$ is defined as in \eqref{eq:.polyFJ.plus}.
\end{theorem}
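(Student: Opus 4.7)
The plan is to mirror the proof of Theorem~\ref{theo:rep2}, substituting $h_{\FJ}^+$ for $h_{\FJ}$; the only substantive change is that each entry of $h_{\FJ}^+$ now has degree at most $d+2$ (rather than $d+1$) because of the quadratic multipliers $\lambda_j^2$, and this degree shift propagates through every occurrence of $c(\cdot,d+1,\cdot)$ in the earlier argument to give $c(\cdot,d+2,\cdot)$ in the final bound $w$.

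First I would apply Lemma~\ref{lem:num.connected} to the set $(S(g)\times\R^{m+1})\cap V(h_{\FJ}^+)\subset\R^{n+m+1}$. This set is cut out by the $n+m+1$ polynomial equations of $h_{\FJ}^+$ together with the $m$ inequalities $g_j\ge 0$, for a total of $n+2m+1$ polynomials in $n+m+1$ variables $(x,\bar\lambda)$, all of degree at most $d+2\ge 2$. The lemma then produces a decomposition into semi-algebraically path connected components $Z_1,\dots,Z_s$ with $s\le c(n+m+1,d+2,n+2m+1)$.

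Next I would show that $f$ is constant on each $Z_i$; this is the $h_{\FJ}^+$-analogue of \cite[Lemma 19]{mai2022exact}, which is precisely the one invoked for the $h_{\FJ}$ case in the proof of Theorem~\ref{theo:rep2}. Along any piecewise-differentiable semi-algebraic path $\phi$ inside $Z_i$, the Fritz John identities encoded by $h_{\FJ}^+$ express $\lambda_0^2\nabla f(\phi(t))$ as a nonnegative combination of the gradients of the currently active $g_j$'s, and since $\phi$ stays in $S(g)\times\R^{m+1}$, the chain rule forces $\tfrac{d}{dt}(f\circ\phi)(t)=0$ except possibly at the parameter values where $\phi(t)\in C^+(g)\cap S(g)$. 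The hypothesis that $f(C^+(g)\cap S(g))$ is finite, combined with continuity of $f\circ\phi$, then pins $f$ to a single value on each $Z_i$. Consequently $r:=|f((S(g)\times\R^{m+1})\cap V(h_{\FJ}^+))|\le s\le c(n+m+1,d+2,n+2m+1)$.

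Finally I would apply Lemma~\ref{lem:quadra2} in the variables $(x,\bar\lambda)$, with equations $h_{\FJ}^+$ and inequalities $g$, to obtain $q\in\Sigma^2_{d(r-1)}[x,\bar\lambda]$ with $f-q$ vanishing on $(S(g)\times\R^{m+1})\cap V(h_{\FJ}^+)=S(g)\cap V(h_{\FJ}^+)$. The estimate $r-1\le c(n+m+1,d+2,n+2m+1)-1$ then gives $d(r-1)\le w$ with $w$ as in \eqref{eq:def.w4}, so $q\in\Sigma^2_w[x,\bar\lambda]$, as required. The main obstacle I anticipate is the middle step: one has to justify carefully that switching from $C(g)$ to $C^+(g)$ in the constancy-along-paths argument of \cite[Lemma 19]{mai2022exact} still absorbs every ``bad'' path-parameter, which is exactly why $C^+(g)$ is the correct notion of critical set in the regime of nonnegative multipliers $\lambda_j^2$.
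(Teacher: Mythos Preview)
Your proposal is correct and follows essentially the same approach as the paper, which simply states that the proof of Theorem~\ref{theo:rep.plus2} is obtained from that of Theorem~\ref{theo:rep2} by replacing $C(g)$ and $h_{\FJ}$ with $C^+(g)$ and $h_{\FJ}^+$, and noting that each entry of $h_{\FJ}^+$ has degree at most $d+2$. Your explicit tracking of the degree shift $d+1\mapsto d+2$ through Lemma~\ref{lem:num.connected} and then into Lemma~\ref{lem:quadra2} is exactly the intended argument.
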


To prove Theorem \ref{theo:rep.plus} (resp. Theorem \ref{theo:rep.plus2}), we do similarly to the proof of Theorem \ref{theo:rep} (resp. Theorem \ref{theo:rep2}) by replacing $C(g)$ and $h_{\FJ}$ with $C^+(g)$ and $h_{\FJ}^+$, respectively.
Note that each entry of $h_{\FJ}^+$ has degree at most $d+2$.
\begin{remark}
The two sets of critical points $C(g)$ and $C^+(g)$ include the set of points $W(f,g)$ at which the Fritz John conditions hold but the Karush--Kuhn--Tucker conditions do not hold for problem \eqref{eq:pop}. 
The assumptions that the images of $C(g)$ and $C^+(g)$ under $f$ are finite allow us to obtain the representations of $f$ in the case of $W(f,g)\ne \emptyset$.
\end{remark}
\subsection{The case with denominators}
We state the fifth main result in the following theorem, which does not require any assumption on the set of critical points:
\begin{theorem}\label{theo:rep.deno}
Let $f,g_1,\dots,g_m\in\R_d[x]$ with $d\ge 1$.
Assume that $f$ is non-negative on $S(g)$  with $g:=(g_1,\dots,g_m)$. 
Set 
\begin{equation}\label{eq:def.w5}
w:=\frac{1}{2}\times b(n+m+1,d+1,2m+n+3)+2d\times {c(n+m+1,d+1,n+m+1)}\,.
\end{equation} 
Then there exists $q\in P_w(g)[x,\bar \lambda]$ such that $\lambda_0(f-q)$ vanishes on $V(h_{\FJ})$, where $\bar\lambda:=(\lambda_0,\dots,\lambda_m)$ and $h_{\FJ}$ is defined as in \eqref{eq:.polyFJ}.
\end{theorem}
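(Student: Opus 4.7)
The plan is to adapt the proof of Theorem \ref{theo:rep} by working only on the sub-locus $V(h_{\FJ}) \cap \{\lambda_0 \ne 0\}$ and using the prefactor $\lambda_0$ to absorb the contribution of $\{\lambda_0 = 0\}$. Concretely, set $A := \{(x, \bar\lambda) \in \R^{n+m+1} : \lambda_0 = 0\}$; if I produce $q \in P_w(g)[x, \bar\lambda]$ with $f - q$ vanishing on $V(h_{\FJ}) \setminus A$, then $\lambda_0(f - q)$ vanishes on all of $V(h_{\FJ})$. This is precisely the setup for Lemma \ref{lem:quadra.deno} in the $(n+m+1)$-variable space, with $g_j$ of degree at most $d$ as the inequality generators and $h_{\FJ}$ of degree at most $d+1$ (with $n+m+1$ entries) as the equation generators.

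The key point, which replaces the finiteness hypothesis on $f(C(g))$ used in Theorem \ref{theo:rep}, is that $f$ takes only finitely many values on $V(h_{\FJ}) \setminus A$ automatically, because $f$ is constant along every semi-algebraically path connected component of $V(h_{\FJ}) \cap \{\lambda_0 \ne 0\}$. Given a piecewise-differentiable semi-algebraic path $\phi(t) = (x(t), \bar\lambda(t))$ in such a component, I multiply the gradient identity $\lambda_0 \nabla f = \sum_{j=1}^m \lambda_j \nabla g_j$ by $x'(t)$ and exploit the complementary slackness $\lambda_j g_j \equiv 0$ along $\phi$: the semi-algebraic structure lets me partition each differentiability interval, index by index, into sub-arcs on which either $\lambda_j \equiv 0$ (killing the $j$-th summand) or $g_j \circ x \equiv 0$ (forcing $\nabla g_j(x(t)) \cdot x'(t) = 0$). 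Together these yield $\lambda_0(t)\,\tfrac{d}{dt} f(x(t)) \equiv 0$, and since $\lambda_0(t) \ne 0$, $f \circ x$ is constant along $\phi$. This mirrors the core argument of \cite[Lemma 13]{mai2022exact}, but the division by $\lambda_0 \ne 0$ is precisely what removes the need for any hypothesis on $C(g)$.

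Finally I bound $r := |f(V(h_{\FJ}) \setminus A)|$ by the number of semi-algebraically path connected components of $V(h_{\FJ}) \cap \{\lambda_0 \ne 0\}$, split as the disjoint union $V(h_{\FJ}) \cap \{\lambda_0 > 0\} \sqcup V(h_{\FJ}) \cap \{\lambda_0 < 0\}$, and combine the sign-flip symmetry $\bar\lambda \mapsto -\bar\lambda$ (which preserves both $V(h_{\FJ})$ and $f$) with Lemma \ref{lem:num.connected} applied to $V(h_{\FJ})$ (with $n+m+1$ equations of degree at most $d+1$ in $n+m+1$ variables) to obtain $r \le 2\,c(n+m+1,d+1,n+m+1)$. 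Lemma \ref{lem:quadra.deno} then produces $q \in P_{dr+u}(g)[x,\bar\lambda]$ with $u = b(n+m+1,d+1,2m+n+3)/2$ such that $f - q$ vanishes on $V(h_{\FJ}) \setminus A$, and the estimate $dr + u \le w$ with $w$ as in \eqref{eq:def.w5} is immediate; multiplying by $\lambda_0$ gives the claim. The principal technical obstacle is the constancy-along-paths argument above, which requires a careful semi-algebraic partition of $\phi$ into arcs admitting a consistent active-set assignment; everything else is bookkeeping on top of previously stated lemmas.
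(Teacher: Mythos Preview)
Your approach is essentially identical to the paper's: remove the locus $\{\lambda_0=0\}$, show $f$ is constant on each connected component of $V(h_{\FJ})\cap\{\lambda_0\neq 0\}$ (the paper cites \cite[Lemma~24]{mai2022exact} for this, while you sketch the path argument explicitly), bound the number of components by $2\,c(n+m+1,d+1,n+m+1)$ via the split into $\{\lambda_0>0\}$ and $\{\lambda_0<0\}$, invoke Lemma~\ref{lem:quadra.deno}, and multiply by $\lambda_0$.

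One small wrinkle in your component count: applying Lemma~\ref{lem:num.connected} to $V(h_{\FJ})$ itself bounds the number of components of $V(h_{\FJ})$, not of $V(h_{\FJ})\cap\{\lambda_0>0\}$, and removing a hyperplane can increase the component count, so the sign-flip symmetry combined with that application does not by itself justify $r\le 2\,c(n+m+1,d+1,n+m+1)$. The paper handles this exactly as you suggest in your split (applying the bound to each of the two halves $V(h_{\FJ})\cap\{\lambda_0>0\}$ and $V(h_{\FJ})\cap\{\lambda_0<0\}$ separately, then summing); you should invoke the connected-component bound on each half rather than on $V(h_{\FJ})$. With that adjustment your argument matches the paper's line for line.
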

\begin{proof}
Using Lemma \ref{lem:num.connected}, we decompose $V(h_{\FJ})\backslash \{\lambda_0=0\}$ into semi-algebraically path connected components:
$Z_1,\dots,Z_s$ with  
\begin{equation}\label{eq:bound.on.s.deno}
s\le 2\times c(n+m+1,d+1,n+m+1)\,.
\end{equation}
It is because each entry of $h_{\FJ}$ has degree at most $d+1\ge 2$ and 
\begin{equation}
V(h_{\FJ})\backslash \{\lambda_0=0\}=(V(h_{\FJ})\cap\{\lambda_0>0\})\cup (V(h_{\FJ})\cap\{\lambda_0<0\})\,.
\end{equation}
Accordingly \cite[Lemma 24]{mai2022exact} shows that $f$ is constant on each $Z_i$.
Thus $f(V(h_{\FJ})\backslash \{\lambda_0=0\})$ is finite.
Set $r=|f(V(h_{\FJ})\backslash \{\lambda_0=0\})|$.
From \eqref{eq:bound.on.s}, we get 
\begin{equation}\label{eq:ineq.deno}
r\le s\le 2\times c(n+m+1,d+1,n+m+1)\,.
\end{equation}
Set 
\begin{equation}
u:=b(n+m+1,d+1,2m+n+3)/2\,.
\end{equation}
By using Lemma \ref{lem:quadra.deno}, there exist $q\in P_\xi(g)[x,\bar\lambda]$ with $\xi=dr+u$ such that $f - q$ vanishes on $V(h_{\FJ})\backslash \{\lambda_0=0\}$.
It implies that $\lambda_0(f - q)$ vanishes on $V(h_{\FJ})$.
By \eqref{eq:ineq.deno} and \eqref{eq:def.w5}, we obtain $\xi\le w$, and hence $q\in P_w(g)[x,\bar\lambda]$.
\end{proof}

\begin{remark}
We have provided the degree bounds for the representations involving preorderings in Theorems \ref{theo:rep}, \ref{theo:rep2}, \ref{theo:rep.plus}, \ref{theo:rep.plus2}, and \ref{theo:rep.deno}.
Our bounds are extremely large because of the exponential bounds for Krivine--Stengle's Nichtnegativstellensatz and the number of connected components of a basic semi-algebraic set.
Fortunately, our bounds only depend on the number of variables, the number of polynomials defining the basic semi-algebraic set, and the degrees of the input polynomials.

In contrast, our degree bound for the representations involving quadratic modules in Theorems \ref{theo:rep.quadra.module} and \ref{theo:rep.quadra.module.deno} stated below is implicit and depends on all information (including the coefficients) of the input polynomials.
To achieve it, we apply the degree bound of Putinar's Positivestellensatz analyzed by Baldi--Mourrain \cite{baldi2021moment} under the finiteness assumption for the image of the set of critical points $C(g)$ under $f$.
\end{remark}

\section{Convergence rate for exact polynomial optimization based on the Fritz John conditions}
\label{sec:application}

This section presents the main application of the degree bounds in Theorems \ref{theo:rep}, \ref{theo:rep2}, \ref{theo:rep.plus}, \ref{theo:rep.plus2}, and \ref{theo:rep.deno} to analyze the convergence rate of the corresponding hierarchies of semidefinite relaxations for a polynomial optimization problem.


\subsection{The case of arbitrary multipliers}

\begin{theorem}\label{theo:pop}
Let $f,g_1,\dots,g_m\in\R_d[x]$. 
Let $f^\star$ be as in problem \eqref{eq:pop} with $g=(g_1,\dots,g_m)$.
Assume that problem \eqref{eq:pop} has a global minimizer and $f(C(g))$ is finite.
Let $h_{\FJ}$ be as in \eqref{eq:.polyFJ} and let $w$ be as in \eqref{eq:def.w5}.
Set 
\begin{equation}\label{eq:def.r}
r=b(n+m+1,2w,m+n+2)/2\,.
\end{equation}
Then $\rho_r(f,\Pi g,h_{\FJ})=f^\star$, where $\Pi g$ is defined as in \eqref{eq:prod.g}.
\end{theorem}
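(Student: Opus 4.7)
The plan is to combine the Nichtnegativstellensatz in Theorem~\ref{theo:rep} with statement~(6) of Lemma~\ref{lem:mom.sos}. First, since problem~\eqref{eq:pop} admits a global minimizer $x^\star\in S(g)$ with $f(x^\star)=f^\star$, the shifted polynomial $\tilde f:=f-f^\star$ is non-negative on $S(g)$ and $\tilde f(C(g))=f(C(g))-f^\star$ is finite. Theorem~\ref{theo:rep} therefore applies to $\tilde f$ and produces $q\in P_{w_0}(g)[x,\bar\lambda]$, with $w_0$ as in~\eqref{eq:def.w}, such that $\tilde f-q$ vanishes on $V(h_{\FJ})$. Comparing~\eqref{eq:def.w} with~\eqref{eq:def.w5} shows $w_0\le w$ (only the coefficient of the $c(\cdot)$-term differs, being $d$ versus $2d$), so $q\in P_w(g)[x,\bar\lambda]=Q_w(\Pi g)[x,\bar\lambda]$, the last equality being the definition of the preordering.

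Next, I would invoke statement~(6) of Lemma~\ref{lem:mom.sos} in the ambient ring $\R[x,\bar\lambda]$, taking the quadratic-module generators to be $\Pi g$ and the variety-defining tuple to be $h_{\FJ}$. The relevant parameter counts are: ambient dimension $n+m+1$; the polynomial $f$, viewed in $\R[x,\bar\lambda]$, has degree at most $d$; the entries of $h_{\FJ}$ all lie in $\R_{d+1}[x,\bar\lambda]$ (the degree-maximizing ones being $\lambda_jg_j$); the degree inequality $2w\ge d+1$ is immediate; and $h_{\FJ}$ has exactly $l=n+m+1$ components ($n$ gradient-equation entries, $m$ complementarity terms $\lambda_jg_j$, and the single normalization $1-\sum_j\lambda_j^2$). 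The lemma then yields $\rho_r(f,\Pi g,h_{\FJ})=f^\star$ with $r=b(n+m+1,2w,l+1)/2=b(n+m+1,2w,n+m+2)/2$, which is precisely~\eqref{eq:def.r}.

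The only step requiring care is the bookkeeping just described, in particular the identification of $P_w(g)[x,\bar\lambda]$ with $Q_w(\Pi g)[x,\bar\lambda]$ so that the output of Theorem~\ref{theo:rep} plugs directly into the hypothesis ``$q\in Q_w(g)[x]$'' of Lemma~\ref{lem:mom.sos}~(6), together with the verification that viewing $f$ and $g_j$ as polynomials in the enlarged variable tuple $(x,\bar\lambda)$ does not inflate any degree. No deeper obstacle is anticipated, and in particular the approach does not need the denominator-type Theorem~\ref{theo:rep.deno} nor the $\theta$-weighted Lemma~\ref{lem:mom.sos.deno}: the finiteness of $f(C(g))$ is exactly the hypothesis that lets us invoke the cleaner Theorem~\ref{theo:rep}, and the slightly larger $w$ from~\eqref{eq:def.w5} only makes the result weaker (larger $r$), never invalid.
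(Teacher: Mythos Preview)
Your approach is essentially the paper's: apply Theorem~\ref{theo:rep} to $f-f^\star$, identify $P_w(g)=Q_w(\Pi g)$, and feed the result into Lemma~\ref{lem:mom.sos}(6). The bookkeeping (dimension $n+m+1$, $l=n+m+1$ entries in $h_{\FJ}$, $w_0\le w$) is all correct.

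There is, however, one genuine step you have skipped. Lemma~\ref{lem:mom.sos}(6) concludes $\rho_r=\bar f^\star$, where $\bar f^\star$ is the optimal value of the \emph{augmented} problem~\eqref{eq:pop.equality} over $(S(\Pi g)\times\R^{m+1})\cap V(h_{\FJ})$, not of the original problem~\eqref{eq:pop}. Your hypothesis supplies $q$ with $f-f^\star-q$ vanishing on $V(h_{\FJ})$; tracing the proof of Lemma~\ref{lem:mom.sos}(6), this yields only $f^\star\le\rho_r\le\bar f^\star$. To close the argument you must show $\bar f^\star=f^\star$, i.e., that the Fritz~John constraints do not raise the optimal value. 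This is exactly what the paper invokes as \cite[Lemma~17]{mai2022exact}: since problem~\eqref{eq:pop} has a global minimizer $x^\star$, the Fritz~John conditions hold there, so some $(x^\star,\bar\lambda^\star)$ lies in $V(h_{\FJ})$ and hence $\bar f^\star\le f(x^\star)=f^\star$; the reverse inequality is trivial. Once you insert this sentence, your proof is complete and matches the paper's.
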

\begin{proof}
Since $S(g)=S(\Pi g)$, \cite[Lemma 17]{mai2022exact} implies that
\begin{equation}\label{eq:equivalent.prob}
\begin{array}{rl}
f^\star:=\min\limits_{x,\bar\lambda}& f(x)\\
\text{s.t.}& x\in S(\Pi g)\,,\,(x,\bar\lambda)\in V(h_{\FJ})\,,
\end{array}
\end{equation}
By assumption, Theorem \ref{theo:rep} yields that there exists $q\in P_w(g)[x,\bar \lambda]=Q_w(\Pi g)[x,\bar \lambda]$ such that $f-f^\star-q$ vanishes on $V(h_{\FJ})$.
Applying the sixth statement of Lemma \ref{lem:mom.sos} (by replacing $g$ with $\Pi g$), we obtain the conclusion.
\end{proof}

\begin{theorem}\label{theo:pop2}
Let $f,g_1,\dots,g_m\in\R_d[x]$. 
Let $f^\star$ be as in problem \eqref{eq:pop} with $g=(g_1,\dots,g_m)$.
Assume that problem \eqref{eq:pop} has a global minimizer and $f(C(g)\cap S(g))$ is finite.
Let $h_{\FJ}$ be as in \eqref{eq:.polyFJ} and let $w$ be as in \eqref{eq:def.w2}.
Set $r$ as in \eqref{eq:def.r}.
Then $\rho_r(f,\Pi g,h_{\FJ})=f^\star$, where $\Pi g$ is defined as in \eqref{eq:prod.g}.
\end{theorem}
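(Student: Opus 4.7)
The proof will mirror the structure of the proof of Theorem \ref{theo:pop}, with the preordering certificate of Theorem \ref{theo:rep} swapped for the sum-of-squares certificate of Theorem \ref{theo:rep2}, and correspondingly the sixth statement of Lemma \ref{lem:mom.sos} swapped for the seventh (which is the one tailored to certificates vanishing on $V(h)\cap S(g)$ rather than on all of $V(h)$). Concretely, the first move is to invoke \cite[Lemma 17]{mai2022exact}, exactly as in \eqref{eq:equivalent.prob}, in order to rewrite problem \eqref{eq:pop} in the lifted variables $z=(x,\bar\lambda)\in\R^{n+m+1}$ as
\begin{equation*}
f^\star=\inf_{(x,\bar\lambda)}\{f(x)\,:\,x\in S(\Pi g),\,(x,\bar\lambda)\in V(h_{\FJ})\}\,.
\end{equation*}

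Next, because $f-f^\star$ is non-negative on $S(g)$ and, by hypothesis, $(f-f^\star)(C(g)\cap S(g))=f(C(g)\cap S(g))-f^\star$ is finite, Theorem \ref{theo:rep2} applies and produces $q\in\Sigma_w^2[x,\bar\lambda]$, with $w$ given by \eqref{eq:def.w2}, such that $f-f^\star-q$ vanishes on $(S(g)\times\R^{m+1})\cap V(h_{\FJ})$. In the ambient space $\R^{n+m+1}$ of $z$, this set coincides with $S(\Pi g)\cap V(h_{\FJ})$, since extending the inequalities $g_j\ge 0$ trivially to the $\bar\lambda$ coordinates gives $S(g)\times\R^{m+1}=S(\Pi g)$.

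The final step is to feed this certificate into the seventh statement of Lemma \ref{lem:mom.sos}, applied in the variables $z$ with $n$ replaced by $n+m+1$, with $g$ left as $(g_1,\dots,g_m)$, and with $h$ replaced by $h_{\FJ}$. This yields $\rho_r(f,\Pi g,h_{\FJ})=f^\star$ at the value of $r$ displayed in \eqref{eq:def.r}. The main obstacle is essentially bookkeeping: one must verify the mild degree hypothesis $2w\ge d+1$ from \eqref{eq:def.w2} (immediate once $d\ge 1$, given that $c(n+m+1,d+1,n+2m+1)\ge 2$) and check that the parameter $r$ prescribed by \eqref{eq:def.r} is at least as large as the bound $b(n+m+1,2w,m+l+1)/2$ demanded by the seventh statement of Lemma \ref{lem:mom.sos}, where $l$ is the number of scalar components of $h_{\FJ}$. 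Once these routine degree counts are in place, the argument closes as a direct transcription of the proof of Theorem \ref{theo:pop}.
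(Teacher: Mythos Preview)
Your proposal is correct and takes essentially the same approach as the paper's own proof: invoke \cite[Lemma 17]{mai2022exact} to pass to the lifted problem \eqref{eq:equi.prob1}, apply Theorem~\ref{theo:rep2} to produce $q\in\Sigma_w^2[x,\bar\lambda]$ with $f-f^\star-q$ vanishing on $(S(g)\times\R^{m+1})\cap V(h_{\FJ})$, and then conclude via the final (seventh) statement of Lemma~\ref{lem:mom.sos}. You are in fact more explicit than the paper about the degree bookkeeping; the paper's three-line proof simply asserts that the final statement of Lemma~\ref{lem:mom.sos} yields the conclusion without spelling out the parameter checks you mention.
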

\begin{proof}
Note that \cite[Lemma 17]{mai2022exact} implies that
\begin{equation}\label{eq:equi.prob1}
\begin{array}{rl}
f^\star:=\min\limits_{x,\bar\lambda}& f(x)\\
\text{s.t.}& x\in S( g)\,,\,(x,\bar\lambda)\in V(h_{\FJ})\,,
\end{array}
\end{equation}
By assumption, Theorem \ref{theo:rep2} yields that there exists $q\in \Sigma^2_w[x,\bar \lambda]$ such that $f-f^\star-q$ vanishes on $(S(g)\cap\R^{m+1})\times V(h_{\FJ})$.
Applying the final statement of Lemma \ref{lem:mom.sos}, we obtain the conclusion.
\end{proof}
\subsection{The case of nonnegative multipliers}
\begin{theorem}\label{theo:pop.plus}
Let $f,g_1,\dots,g_m\in\R_d[x]$. 
Let $f^\star$ be as in problem \eqref{eq:pop} with $g=(g_1,\dots,g_m)$.
Assume that problem \eqref{eq:pop} has a global minimizer and $f(C^+(g))$ is finite.
Let $h_{\FJ}^+$ be as in \eqref{eq:.polyFJ} and let $w$ be as in \eqref{eq:def.w3}.
Set $r$ as in \eqref{eq:def.r}.
Then $\rho_r(f,\Pi g,h_{\FJ})=f^\star$, where $\Pi g$ is defined as in \eqref{eq:prod.g}.
\end{theorem}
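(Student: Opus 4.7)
The plan is to follow the same pattern that produced Theorems \ref{theo:pop} and \ref{theo:pop2}, simply swapping the ``unsigned'' ingredients $C(g)$ and $h_{\FJ}$ for their ``nonnegative-multiplier'' counterparts $C^+(g)$ and $h_{\FJ}^+$. The three moving parts are: (i) an equivalent reformulation of \eqref{eq:pop} as minimization of $f$ on $S(\Pi g)\cap V(h_{\FJ}^+)$; (ii) the Positivstellensatz of Theorem \ref{theo:rep.plus}, which, under the finiteness of $f(C^+(g))$, delivers a preordering certificate on $V(h_{\FJ}^+)$; (iii) the exactness statement (sixth item) of Lemma \ref{lem:mom.sos}, which turns that certificate into finite convergence of the hierarchy at the claimed order.

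First I would invoke the reformulation lemma (\cite[Lemma 17]{mai2022exact} in the $h_{\FJ}^+$ version, used analogously in the proof of Theorem \ref{theo:pop}) to rewrite
\begin{equation}
f^\star=\min_{x,\bar\lambda}\,\{f(x)\,:\,x\in S(\Pi g),\ (x,\bar\lambda)\in V(h_{\FJ}^+)\}.
\end{equation}
This uses only the existence of a global minimizer of \eqref{eq:pop}, since at any such minimizer the Fritz John conditions \eqref{eq:FJcond} hold and produce a feasible $\bar\lambda$, while $S(g)=S(\Pi g)$ ensures the ``$x\in S(\Pi g)$'' reformulation is lossless. In particular, $f-f^\star\geq 0$ on $S(\Pi g)\cap V(h_{\FJ}^+)$.

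Next I would apply Theorem \ref{theo:rep.plus} to the nonnegative polynomial $f-f^\star$ (finiteness of $f(C^+(g))$ is preserved under the shift by a constant), obtaining $q\in P_w(g)[x,\bar\lambda]$ with $w$ given by \eqref{eq:def.w3} such that $f-f^\star-q$ vanishes on $V(h_{\FJ}^+)$. The key observation, identical to the one used in the proof of Theorem \ref{theo:pop}, is that the truncated preordering generated by $g$ coincides with the truncated quadratic module generated by $\Pi g$, so $q\in Q_w(\Pi g)[x,\bar\lambda]$.

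Finally I would feed this into the sixth statement of Lemma \ref{lem:mom.sos}, applied with the variables $(x,\bar\lambda)$ in $\R^{n+m+1}$, constraint polynomials $\Pi g$, and equality polynomials $h_{\FJ}^+$. The number of entries of $h_{\FJ}^+$ is $n+m+1$, and one checks $2w\geq d+1$ directly from \eqref{eq:def.w3}. Lemma \ref{lem:mom.sos}(6) then yields $\rho_r(f,\Pi g,h_{\FJ}^+)=f^\star$ with $r=b(n+m+1,2w,(n+m+1)+1)/2$, matching \eqref{eq:def.r}. The only real subtlety — essentially the obstacle — is bookkeeping the ambient dimension and the count of generators of $h_{\FJ}^+$ correctly so that the parameters fed into $b(\cdot,\cdot,\cdot)$ exactly produce the $r$ in \eqref{eq:def.r}; everything else is a verbatim adaptation of the proofs of Theorems \ref{theo:pop} and \ref{theo:pop2}.
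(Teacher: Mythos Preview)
Your proposal is correct and follows exactly the approach the paper indicates, namely a verbatim adaptation of the proof of Theorem~\ref{theo:pop} with $C(g),h_{\FJ}$ replaced by $C^+(g),h_{\FJ}^+$ and Theorem~\ref{theo:rep} replaced by Theorem~\ref{theo:rep.plus}. You also implicitly catch the evident typos in the statement (the reference to \eqref{eq:.polyFJ} and the occurrence of $h_{\FJ}$ in the conclusion should both read $h_{\FJ}^+$), and your bookkeeping for the parameters in $b(\cdot,\cdot,\cdot)$ is correct.
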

The proof of Theorem \ref{theo:pop.plus}, which is based on Theorem \ref{theo:rep.plus}, is similar to the one of Theorem \ref{theo:pop}.
\begin{theorem}\label{theo:pop.plus2}
Let $f,g_1,\dots,g_m\in\R_d[x]$. 
Let $f^\star$ be as in problem \eqref{eq:pop} with $g=(g_1,\dots,g_m)$.
Assume that problem \eqref{eq:pop} has a global minimizer and $f(C^+(g)\cap S(g))$ is finite.
Let $h_{\FJ}^+$ be as in \eqref{eq:.polyFJ} and let $w$ be as in \eqref{eq:def.w4}.
Set $r$ as in \eqref{eq:def.r}.
Then $\rho_r(f,\Pi g,h_{\FJ})=f^\star$, where $\Pi g$ is defined as in \eqref{eq:prod.g}.
\end{theorem}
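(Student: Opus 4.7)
The plan is to follow the template of Theorem \ref{theo:pop2}, substituting the non-negative multiplier objects $C^+(g)$ and $h_{\FJ}^+$ for $C(g)$ and $h_{\FJ}$, and invoking Theorem \ref{theo:rep.plus2} in place of Theorem \ref{theo:rep2}. The argument has three stages: (i) lift problem \eqref{eq:pop} to an equivalent problem in the variables $(x,\bar\lambda)\in\R^{n+m+1}$ with feasible set $(S(g)\times\R^{m+1})\cap V(h_{\FJ}^+)$; (ii) produce a sum-of-squares certificate for $f-f^\star$ on this lifted set via Theorem \ref{theo:rep.plus2}; (iii) convert this certificate into the desired relaxation identity using item~7 of Lemma \ref{lem:mom.sos}.

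For stage (i), the non-negative Fritz John analogue of \cite[Lemma 17]{mai2022exact} gives
\begin{equation*}
f^\star \;=\; \min_{x,\bar\lambda}\bigl\{\,f(x)\;:\; x\in S(g),\;(x,\bar\lambda)\in V(h_{\FJ}^+)\,\bigr\}.
\end{equation*}
The inequality $\le$ uses the assumed existence of a global minimizer $x^\star$: the Fritz John conditions \eqref{eq:FJcond} provide multipliers which, once renormalized via $\sum_{j=0}^m\lambda_j^2=1$, yield a point $(x^\star,\bar\lambda^\star)\in V(h_{\FJ}^+)$ with $f(x^\star)=f^\star$; the reverse inequality is immediate since every feasible $(x,\bar\lambda)$ projects to $x\in S(g)$. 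Stage (ii) is then a direct application of Theorem \ref{theo:rep.plus2} to the non-negative polynomial $f-f^\star$, whose image on $C^+(g)\cap S(g)$ is a translate of $f(C^+(g)\cap S(g))$ and hence still finite: one obtains $q\in\Sigma_w^2[x,\bar\lambda]$ with $w$ as in \eqref{eq:def.w4} such that $f-f^\star-q$ vanishes on $S(g)\cap V(h_{\FJ}^+)$.

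For stage (iii), I would apply item~7 of Lemma \ref{lem:mom.sos} in ambient dimension $n+m+1$, with $g$ as the inequality tuple (so that $\Pi g$ appears in $\rho_r$, using $S(g)=S(\Pi g)$) and $h_{\FJ}^+$ as the equality tuple; this gives $\rho_r(f,\Pi g,h_{\FJ}^+)=f^\star$ at the order $r$ specified in \eqref{eq:def.r}. The hypothesis $2w\ge d+1$ is trivially satisfied by the size of $w$. I do not anticipate any genuine obstacle: the only point worth checking is that the counting of entries in the third argument of $b(\cdot,\cdot,\cdot)$ prescribed by Lemma \ref{lem:pos2} (implicit inside the proof of item~7) aligns with \eqref{eq:def.r}, exactly as in the companion Theorems \ref{theo:pop}, \ref{theo:pop2} and \ref{theo:pop.plus}; the fact that each entry of $h_{\FJ}^+$ has degree at most $d+2$ rather than $d+1$ is already absorbed into \eqref{eq:def.w4}.
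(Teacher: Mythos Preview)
Your proposal is correct and follows essentially the same route as the paper: the paper simply states that the proof of Theorem~\ref{theo:pop.plus2}, based on Theorem~\ref{theo:rep.plus2}, is similar to that of Theorem~\ref{theo:pop2}, which is precisely the three-stage template you carry out (lift via the Fritz John analogue of \cite[Lemma 17]{mai2022exact}, invoke Theorem~\ref{theo:rep.plus2} for the certificate, then apply the final statement of Lemma~\ref{lem:mom.sos}). Your write-up is in fact more explicit than the paper's, and you correctly use $h_{\FJ}^+$ throughout; the occurrences of $h_{\FJ}$ and the reference to \eqref{eq:.polyFJ} in the theorem statement appear to be typos for $h_{\FJ}^+$ and \eqref{eq:.polyFJ.plus}.
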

The proof of Theorem \ref{theo:pop.plus2} based on Theorem \ref{theo:rep.plus2} is similar to the one of Theorem \ref{theo:pop2}.

\begin{remark}
It is worth pointing out in Theorems \ref{theo:pop}, \ref{theo:pop2}, \ref{theo:pop.plus}, and \ref{theo:pop.plus2} that the order $r\in\N$ satisfying $\rho_r(f,\Pi g,h_{\FJ})=f^\star$ depends on $n$ (the number of variables), $m$ (the number of inequality constraints $g_j$), and $d$ (the upper bound on the degrees of $f,g_j$) but does not depend on the coefficients of $f,g_j$.
\end{remark}

\begin{remark}
Under the Archimedean condition (resp. the compactness assumption) for $S(g)$,  the sequence $(\rho_k(f,g,h_{\FJ}))_{k\in\N}$ (resp. $(\rho_k(f,\Pi g,h_{\FJ}))_{k\in\N}$) converges to $f^\star$ faster than the sequence $(\rho_k(f,g,0))_{k\in\N}$ (resp. $(\rho_k(f,\Pi g,0))_{k\in\N}$).
It is because of the following inequalities:
\begin{equation}
\rho_k(f,g,0)\le\rho_k(f,g,h_{\FJ})\le f^\star\quad\quad(\text{resp. }\rho_k(f,\Pi g,0)\le\rho_k(f,\Pi g,h_{\FJ})\le f^\star)\,.
\end{equation}
Thus we can obtain the convergence rate for $(\rho_k(f,g,h_{\FJ}))_{k\in\N}$ (resp. $(\rho_k(f,\Pi g,h_{\FJ}))_{k\in\N}$) from the available convergence rate of $(\rho_k(f,g,0))_{k\in\N}$ (resp. $(\rho_k(f,\Pi g,0))_{k\in\N}$) in \cite{fang2020sum,laurent2021effective,slot2021sum,baldi2021moment}.
However, these rates do not give the finite convergence for the sequence $(\rho_k(f,g,h_{\FJ}))_{k\in\N}$ (resp. $(\rho_k(f,\Pi g,h_{\FJ}))_{k\in\N}$).
Moreover, the computational complexity for the semidefinite relaxation of each value $\rho_k(f,g,h_{\FJ})$ (resp. $\rho_k(f,\Pi g,h_{\FJ})$) grows more quickly than the one of the value $\rho_k(f,g,0)$ (resp. $\rho_k(f,\Pi g,0)$) as $k$ increases.
It is because to obtain the relaxation of the value $\rho_k(f,g,h_{\FJ})$ (resp. $\rho_k(f,\Pi g,h_{\FJ})$), we utilize $m+1$ additional variables and $n+m+1$ additional equality constraints to the original polynomial optimization problem \eqref{eq:pop}.
\end{remark}

\subsection{The case with denominators}

\begin{theorem}\label{theo:pop.deno}
Let $f,g_1,\dots,g_m\in\R_d[x]$. 
Let $f^\star$ be as in problem \eqref{eq:pop} with $g=(g_1,\dots,g_m)$.
Let $h_{\FJ}$ be as in \eqref{eq:.polyFJ} and let $w$ be as in \eqref{eq:def.w5}.
Assume that problem \eqref{eq:pop} has global minimizer  and one of the following two conditions holds:
\begin{enumerate}
\item  the Karush--Kuhn--Tucker conditions hold for problem \eqref{eq:pop} at $x^\star$;
\item there exists a sequence of points $(x^{(t)},\bar \lambda^{(t)})_{t\in\N}$ in $(S(g)\times \R^{m+1})\cap V(h_{\FJ})$ such that $(f(x^{(t)}))_{t\in\N}$ converges to $\bar f^\star$ and  $\lambda_0^{(t)}>0$,
\end{enumerate}
Set 
\begin{equation}\label{eq:def.r.deno}
r=b(n+m+1,2(w+1),m+n+2)/2\,.
\end{equation}
Then $\rho_r(f,\Pi g,h_{\FJ},\lambda_0)=f^\star$, where $\Pi g$ is defined as in \eqref{eq:prod.g}.
\end{theorem}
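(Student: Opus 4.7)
The plan is to combine Theorem \ref{theo:rep.deno} with the fourth statement of Lemma \ref{lem:mom.sos.deno}, in direct analogy with the undenominated proof of Theorem \ref{theo:pop}, but now with the multiplier $\theta = \lambda_0$. First I would apply Theorem \ref{theo:rep.deno} to $f - f^\star$, which is non-negative on $S(g)$ because the problem attains its minimum; this yields $q \in P_w(g)[x, \bar\lambda] = Q_w(\Pi g)[x, \bar\lambda]$ such that $\lambda_0(f - f^\star - q)$ vanishes on $V(h_{\FJ})$. A pleasant feature is that Theorem \ref{theo:rep.deno} carries no finite-image hypothesis on $C(g)$ or $C^+(g)$, so the decomposition is available unconditionally once a global minimizer exists.

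Unlike Lemma \ref{lem:mom.sos}, its denominated counterpart Lemma \ref{lem:mom.sos.deno} does not automatically supply $\rho_k \le f^\star$, so the core new step is to verify this inequality through Lemma \ref{lem:mom.sos.deno}(3) applied with $\theta = \lambda_0$. Invoking \cite[Lemma 17]{mai2022exact}, problem \eqref{eq:pop} equals
\[
\min\{f(x) : (x, \bar\lambda) \in (S(\Pi g) \times \R^{m+1}) \cap V(h_{\FJ})\}.
\]
Under condition 2, the required sequence with $\lambda_0^{(t)} > 0$ is supplied directly. Under condition 1, I would construct an optimal point of the extended problem with $\lambda_0^\star > 0$ from the KKT multipliers $\mu_1, \dots, \mu_m \ge 0$ at $x^\star$ by setting $\tilde\lambda_0 = 1$ and $\tilde\lambda_j = \mu_j$ (which already satisfies the Lagrange and complementarity equations in $h_{\FJ}$), and then rescaling by $1/\sqrt{1 + \sum_{j=1}^m \mu_j^2}$ to enforce $\sum_{j=0}^m (\lambda_j^\star)^2 = 1$; by construction $\lambda_0^\star > 0$. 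Either way, Lemma \ref{lem:mom.sos.deno}(3) yields $\rho_k(f, \Pi g, h_{\FJ}, \lambda_0) \le f^\star$ for every $k$.

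Finally I would apply Lemma \ref{lem:mom.sos.deno}(4) with $g$ replaced by $\Pi g$, $h = h_{\FJ}$, $\theta = \lambda_0$, and $u = 1$ (viewing $\lambda_0 \in \R_2[x, \bar\lambda]$). The ambient dimension is $n + m + 1$ and the vector $h_{\FJ}$ has exactly $n + m + 1$ entries ($n$ from the Lagrange gradient equation, $m$ from complementarity, and one from normalisation), so $l + 1 = n + m + 2$ and the lemma's output threshold is
\[
r = b(n + m + 1, 2(w + 1), n + m + 2)/2,
\]
which is precisely \eqref{eq:def.r.deno}. Combined with the $q$ produced in the first step, this delivers $\rho_r(f, \Pi g, h_{\FJ}, \lambda_0) = f^\star$.

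The only genuinely new difficulty compared to Theorem \ref{theo:pop} is the verification of the upper bound $\rho_k \le f^\star$, and within that the one nontrivial observation is the translation of KKT data into a feasible point on $V(h_{\FJ})$ with $\lambda_0^\star > 0$; the remaining steps are bookkeeping around the identity $P_w(g)[x,\bar\lambda] = Q_w(\Pi g)[x,\bar\lambda]$ and the degree count $u = 1$ of the multiplier $\lambda_0$.
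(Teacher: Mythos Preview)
Your proposal is correct and follows essentially the same route as the paper's proof: reduce to the extended problem via \cite[Lemma 17]{mai2022exact}, use the KKT data (or the given sequence) to secure $\lambda_0>0$ so that Lemma~\ref{lem:mom.sos.deno}(3) yields $\rho_k\le f^\star$, invoke Theorem~\ref{theo:rep.deno} to obtain $q\in P_w(g)[x,\bar\lambda]=Q_w(\Pi g)[x,\bar\lambda]$ with $\lambda_0(f-f^\star-q)$ vanishing on $V(h_{\FJ})$, and conclude with Lemma~\ref{lem:mom.sos.deno}(4). Your explicit construction of $\bar\lambda^\star$ by normalising $(1,\mu_1,\dots,\mu_m)$ and your degree bookkeeping ($u=1$, $l=n+m+1$) merely spell out details the paper leaves implicit.
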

\begin{proof}
Since $S(g)=S(\Pi g)$, \cite[Lemma 17]{mai2022exact} implies \eqref{eq:equivalent.prob}.
Note that the first condition implies that the Fritz John conditions hold for problem \eqref{eq:pop} at $x^\star$ with multipliers $\bar\lambda^\star$ such that $\lambda_0^\star>0$, so that $(x^\star,\bar\lambda^\star)$ is a global minimizer for problem \eqref{eq:equivalent.prob} with $\lambda_0^\star>0$.
The second condition implies that each $(x^{(t)},\bar \lambda^{(t)})$ is a feasible solution for problem \eqref{eq:equivalent.prob} such that $(f(x^{(t)}))_{t\in\N}$ converges to $\bar f^\star$ and  $\lambda_0^{(t)}>0$.
The third statement of Lemma \ref{lem:mom.sos.deno} says that $\rho_k(f,\Pi g,h_{\FJ},\lambda_0)\le f^\star$, for every $k\in\N$.
In addition, Theorem \ref{theo:rep.deno} yields that there exists $q\in P_w(g)[x,\bar \lambda]=Q_w(\Pi g)[x,\bar \lambda]$ such that $\lambda_0(f-f^\star-q)$ vanishes on $V(h_{\FJ})$.
Applying the final statement of Lemma \ref{lem:mom.sos.deno} (by replacing $g,h,\theta$ with $\Pi g,h_{\FJ},\lambda_0$), we obtain the conclusion.
\end{proof}
\begin{remark}
As shown by Freund in \cite{freund2016optimality}, the Karush--Kuhn--Tucker conditions hold for problem \eqref{eq:pop} at its global minimizers if one of the following conditions holds:
\begin{enumerate}
\item $g$ can be decomposed as $g=(g^{(1)},g^{(2)},-g^{(2)})$, $S(g^{(1)})$ has non-empty interior, and $g_j$, $j=1,\dots,m$, are concave;
\item $g_j$, $j=1,\dots,m$, are linear;
\item $f$ is a pseudoconvex and $g_j$, $j=1,\dots,m$, are quasiconcave (see the definitions of pseudoconvexity and quasiconcavity in \cite{freund2016optimality}).
\end{enumerate}
Thanks to Theorem \ref{theo:pop.deno} (as well as Theorems \ref{theo:pop.KKT}, \ref{theo:pop.KKT.plus}, and \ref{theo:pop.quadra.deno} stated below), we can compute the exact optimal value $f^\star$ for problem \eqref{eq:pop} by using semidefinite programming under one of the above three conditions.
\end{remark}

\section{Explicit degree bounds for the representation based on the Karush--Kuhn--Tucker conditions}
\label{sec:bound.KKT}
\subsection{The case of arbitrary multipliers}
We prove the following lemma similar to \cite[Lemma 3.3]{demmel2007representations} by using the tools from real algebraic geometry (instead of the ones from complex algebraic geometry):
\begin{lemma}\label{lem:constant.KKT}
Let $f,g_1,\dots,g_m\in\R[x]$. 
Set
\begin{equation}\label{eq:.polyKKT}
    h_{\KKT}:=(\nabla f-\sum_{j=1}^m \lambda_j \nabla g_j,\lambda_1g_1,\dots,\lambda_mg_m)\,.
\end{equation}
Let $W$ be a semi-algebraically path connected component of $V(h_{\KKT})$. Then $f$ is constant on $W$.
\end{lemma}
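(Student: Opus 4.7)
The plan is to take an arbitrary semi-algebraic continuous path inside $W$, exploit the piecewise-differentiable structure guaranteed by semi-algebraicity, and show that the derivative of $f$ along this path vanishes piecewise thanks to the two pieces of the KKT system: the Lagrangian identity and complementarity.

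More concretely, fix two points in $W$ and, by semi-algebraic path connectedness, take a continuous semi-algebraic map $\phi:[0,1]\to W$ joining them, write $\phi(t)=(x(t),\lambda(t))$, and recall from \cite[Theorem 1.8.1]{pham2016genericity} that $\phi$ is piecewise-differentiable. For every $t$ where the derivative exists, the Lagrangian equation $\nabla f(x(t))=\sum_j \lambda_j(t)\nabla g_j(x(t))$ yields
\begin{equation*}
\frac{d}{dt}f(x(t))=\nabla f(x(t))^\top x'(t)=\sum_{j=1}^m \lambda_j(t)\,\nabla g_j(x(t))^\top x'(t).
\end{equation*}
The goal is to show each summand is zero.

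The key step is the use of complementarity together with semi-algebraic monotonicity. For each $j$, the functions $t\mapsto \lambda_j(t)$ and $t\mapsto g_j(x(t))$ are semi-algebraic in one variable, so (by the cell decomposition in dimension one) there is a common finite partition $0=t_0<t_1<\dots<t_N=1$ such that on each open sub-interval $(t_i,t_{i+1})$ and each index $j$, at least one of $\lambda_j$ or $g_j\circ x$ is identically zero; this is forced by $\lambda_j(t)g_j(x(t))\equiv 0$ and the fact that a non-identically-zero semi-algebraic function of one variable has only finitely many zeros on any interval. On such a sub-interval, if $\lambda_j\equiv 0$ the corresponding summand is obviously zero, and if $g_j\circ x\equiv 0$ then differentiating gives $\nabla g_j(x(t))^\top x'(t)=0$, so again the summand vanishes. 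Hence $\frac{d}{dt}f(x(t))=0$ on each $(t_i,t_{i+1})$, so $f\circ x$ is constant on each closed sub-interval $[t_i,t_{i+1}]$ by continuity, and therefore constant on $[0,1]$.

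The only delicate point is the partitioning argument: one must make sure that on each sub-interval one really has the identically-zero dichotomy rather than merely a pointwise one. This follows from the Tarski--Seidenberg/cell decomposition applied to the semi-algebraic functions $\lambda_j\circ\phi$ and $g_j\circ x\circ\phi$ of a single variable $t$, so the conclusion is routine once this one-variable cell decomposition is invoked. No further ingredient is needed, and the argument is the direct KKT analogue of \cite[Lemma 13]{mai2022exact} used earlier for the Fritz John case.
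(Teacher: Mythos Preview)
Your proof is correct and follows the same underlying strategy as the paper: take a semi-algebraic path between two points of $W$, use its piecewise-differentiability, and show that the derivative of $f\circ x$ vanishes on each piece. The paper's argument first reduces to non-singular endpoints (by density) and phrases things via the Lagrangian $L(x,\lambda)=f(x)+\sum_j\lambda_j g_j(x)$, noting that $L=f$ on $V(h_{\KKT})$; it also inserts a remark about taking principal square roots of the $\lambda_j$, which is really tailored to the $h_{\KKT}^+$ case and is not needed here. Your version sidesteps the non-singular reduction entirely and makes the crucial use of complementarity explicit through one-variable cell decomposition, which transparently yields the dichotomy $\lambda_j\equiv 0$ or $g_j\circ x\equiv 0$ on each sub-interval; this is cleaner and buys you a fully self-contained argument without any appeal to the smooth locus. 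One small wording point: the claim ``a non-identically-zero semi-algebraic function of one variable has only finitely many zeros on any interval'' is not literally true (its zero set can contain sub-intervals), but the statement you actually need---that after a finite partition each function is either identically zero or nowhere zero on each open piece---is exactly what cell decomposition gives, and you invoke it correctly in the next paragraph.
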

\begin{proof} 
Recall $\lambda:=(\lambda_1,\dots,\lambda_m)$.
Choose two arbitrary points $(x^{(0)},\lambda^{(0)})$, $(x^{(1)},\lambda^{(1)})$ in $W$. 
We claim that $f(x^{(0)}) = f(x^{(1)})$.

It is sufficient to assume that both $(x^{(0)},\lambda^{(0)})$ and $(x^{(1)},\lambda^{(1)})$ are non-singular points.
If at least one of $(x^{(0)},\lambda^{(0)})$ and $(x^{(1)},\lambda^{(1)})$ is singular, we choose arbitrarily close non-singular points to approximate $(x^{(0)},\lambda^{(0)})$ and $(x^{(1)},\lambda ^{(1)})$ then we apply the continuity of $f$ to obtain $f(x^{(0)}) = f(x^{(1)})$.
It is because the set of non-singular points of $W$ is dense and open in $W$.

If a manifold is path-connected, the set of its non-singular points is a manifold that is also path-connected.
By assumption, there exists a continuous piecewise-differentiable path $\phi(\tau) = (x(\tau),\lambda(\tau))$, for $\tau\in[0,1]$, lying inside $W$ such that $\phi(0) = (x^{(0)},\lambda^{(0)})$ and $\phi(1) = (x^{(1)},\lambda^{(1)})$. 
We claim that $\tau\mapsto f(x(\tau))$ is constant on $[0,1]$.
The Lagrangian function
\begin{equation}\label{eq:Lagran.KKT}
    L(x,\lambda) = f(x)+\sum_{j=1}^m \lambda_j g_j (x)\,.
\end{equation}
is equal to $f(x)$ on $V(h_{\KKT})$, which contains $\phi([0,1])$. 
Let $\mu_j(\tau)$ be the principal square root of
$\lambda_j(\tau)$ for $\tau\in[0,1]$, $j=1,\dots,m$. 
By the mean value theorem, it follows that $f (x (0) ) = f (x (1) )$.
We now obtain $f (x^{(0)})$ = $f (x^{(1)})$ and hence $f$ is constant on $W$.
\end{proof}
We state in the following theorem the explicit degree bound for the representation based on  the Karush--Kuhn--Tucker conditions:
\begin{theorem}\label{theo:rep.KKT}
Let $f,g_1,\dots,g_m\in\R_d[x]$ with $d\ge 1$. 
Assume that $f$ is non-negative on $S(g)$  with $g:=(g_1,\dots,g_m)$. 
Set 
\begin{equation}\label{eq:def.w.KKT}
w:=\frac{1}{2}\times b(n+m,d+1,2m+n+1)+d\times {c(n+m,d+1,n+m)}\,.
\end{equation}
Then there exists $q\in P_w(g)[x, \lambda]$ such that $f-q$ vanishes on $V(h_{\KKT})$, where $\lambda:=(\lambda_1,\dots,\lambda_m)$ and $h_{\KKT}$ is defined as in \eqref{eq:.polyKKT}.
\end{theorem}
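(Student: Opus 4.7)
The strategy will mirror the proof of Theorem \ref{theo:rep}, the only substantive change being that the KKT polynomial vector $h_{\KKT}$ takes the place of $h_{\FJ}$ and the role played by \cite[Lemma 13]{mai2022exact} in the Fritz John case is now played by Lemma \ref{lem:constant.KKT} just established. Since $f,g_j$ do not involve $\lambda$, I will view $f,g_1,\dots,g_m$ as elements of $\R_d[x,\lambda]$ and $h_{\KKT}$ as a tuple in $\R_{d+1}[x,\lambda]$ (the complementarity entries $\lambda_jg_j$ have degree $d+1$, and the gradient entries have degree $d$) in the ambient space $\R^{n+m}$ with coordinates $(x,\lambda)$.

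First, I would apply Lemma \ref{lem:num.connected} to the variety $V(h_{\KKT})\subset\R^{n+m}$. There are no inequality constraints and the entries of $h_{\KKT}$ form $n+m$ polynomials of degree at most $d+1\ge 2$, so $V(h_{\KKT})$ decomposes into finitely many semi-algebraically path-connected components $Z_1,\dots,Z_s$ with
\begin{equation}
s\le c(n+m,d+1,n+m).
\end{equation}
Next, Lemma \ref{lem:constant.KKT} guarantees that $f$ is constant on each $Z_i$, so
\begin{equation}
r:=|f(V(h_{\KKT}))|\le s\le c(n+m,d+1,n+m).
\end{equation}
In particular, $f(V(h_{\KKT}))$ is finite, which is exactly the finite-image hypothesis needed to feed into the ``variety version'' of Lagrangian interpolation encoded in Lemma \ref{lem:quadra}.

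Now I would invoke Lemma \ref{lem:quadra} applied with ambient dimension $n+m$, the inequalities $g=(g_1,\dots,g_m)$, the equalities $h=h_{\KKT}$ (which has $l=n+m$ entries), and the non-negativity of $f$ on $S(g)\subset\R^n$ (hence on $S(g)\times\R^m\subset\R^{n+m}$). This produces $q\in P_\xi(g)[x,\lambda]$ with $\xi=dr+u$ and $u=b(n+m,d+1,2m+n+1)/2$ (the relevant Krivine--Stengle parameter in this setting), such that $f-q$ vanishes on $V(h_{\KKT})$. Combining the bound on $r$ from the previous paragraph with the definition \eqref{eq:def.w.KKT} of $w$ gives
\begin{equation}
\xi=dr+u\le d\cdot c(n+m,d+1,n+m)+\tfrac{1}{2}b(n+m,d+1,2m+n+1)=w,
\end{equation}
hence $q\in P_w(g)[x,\lambda]$, which is the desired conclusion.

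The plan contains no serious mathematical obstacle beyond what is already handled: Lemma \ref{lem:constant.KKT} has been proven separately (using the Lagrangian \eqref{eq:Lagran.KKT} and the principal-square-root substitution $\mu_j=\sqrt{\lambda_j}$, which reduces constancy of $f$ along a component to a standard mean-value-theorem argument on the non-singular locus), and the quantitative bookkeeping on $s$, $r$, and $u$ is purely arithmetic. The most delicate point, as with Theorem \ref{theo:rep}, is simply keeping the parameters $(n,d,m,l)$ of Lemma \ref{lem:num.connected} and Lemma \ref{lem:quadra} aligned with the right ambient dimension and polynomial counts of the KKT system; everything else flows directly from the template already used for the Fritz John representation.
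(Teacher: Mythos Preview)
Your proposal is correct and follows essentially the same approach as the paper: decompose $V(h_{\KKT})$ into connected components via Lemma~\ref{lem:num.connected}, use Lemma~\ref{lem:constant.KKT} to conclude $f(V(h_{\KKT}))$ is finite, and then apply Lemma~\ref{lem:quadra} in the ambient space $\R^{n+m}$ to obtain $q\in P_\xi(g)[x,\lambda]$ with $\xi\le w$. The bookkeeping of parameters $(n+m,d+1,n+m)$ for the component count and $(n+m,d+1,2m+n+1)$ for the Krivine--Stengle bound matches the paper exactly.
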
 
\begin{proof}
Using Lemma \ref{lem:num.connected}, we decompose $V(h_{\KKT})$ into semi-algebraically path connected components:
$Z_1,\dots,Z_s$ with  
\begin{equation}\label{eq:bound.on.s.KKT}
s\le c(n+m,d+1,n+m)\,,
\end{equation}
since each entry of $h_{\KKT}$ has degree at most $d+1\ge 2$.
Accordingly Lemma \ref{lem:constant.KKT} shows that $f$ is constant on each $Z_i$.
Thus $f(V(h_{\KKT}))$ is finite.
Set $r=|f(V(h_{\KKT}))|$.
From \eqref{eq:bound.on.s.KKT}, we get 
\begin{equation}\label{eq:ineq.KKT}
r\le s\le c(n+m,d+1,n+m)\,.
\end{equation}
Set 
\begin{equation}
u:=b(n+m,d+1,2m+n+1)/2\,.
\end{equation}
By using Lemma \ref{lem:quadra}, there exists $q\in P_\xi(g)[x,\lambda]$ with $\xi=dr+u$ such that $f - q$ vanishes on $V(h_{\KKT})$.
By \eqref{eq:ineq.KKT} and \eqref{eq:def.w.KKT}, $\xi\le w$, and hence $q\in P_w(g)[x,\lambda]$. 
\end{proof}
We apply Theorem \ref{theo:rep.KKT} for polynomial optimization as follows:
\begin{theorem}\label{theo:pop.KKT}
Let $f,g_1,\dots,g_m\in\R_d[x]$. 
Let $f^\star$ be as in problem \eqref{eq:pop} with $g=(g_1,\dots,g_m)$.
Assume that problem \eqref{eq:pop} has a global minimizer at which the Karush--Kuhn--Tucker conditions hold for this problem.
Let $h_{\KKT}$ be as in \eqref{eq:.polyKKT} and let $w$ be as in \eqref{eq:def.w.KKT}.
Set 
\begin{equation}
r=b(n+m,2w,m+n+1)/2\,.
\end{equation}
Then $\rho_r(f,\Pi g,h_{\KKT})=f^\star$, where $\Pi g$ is defined as in \eqref{eq:prod.g}.
\end{theorem}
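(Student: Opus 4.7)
The plan is to follow the pattern of the proof of Theorem \ref{theo:pop}, swapping the Fritz John objects for their KKT counterparts and invoking Theorem \ref{theo:rep.KKT} in place of Theorem \ref{theo:rep}. First I would note that $S(g)=S(\Pi g)$ and rewrite
\begin{equation}
f^\star=\inf\limits_{(x,\lambda)}\set{f(x)\,:\, x\in S(\Pi g),\ (x,\lambda)\in V(h_{\KKT})}.
\end{equation}
The inequality ``$\le$" is immediate because the feasible set on the right is contained in the original one (via the projection onto the $x$-coordinates, which lands inside $S(g)$). The reverse inequality ``$\ge$" is exactly where the hypothesis is used: by assumption there is a global minimizer $x^\star$ at which the KKT conditions hold, so there exist multipliers $\lambda^\star\in\R^m$ with $(x^\star,\lambda^\star)\in V(h_{\KKT})$ and $f(x^\star)=f^\star$. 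This reformulation is the analogue of \eqref{eq:equivalent.prob} for the KKT setting and could be attributed to \cite[Lemma 17]{mai2022exact}.

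Next I would apply Theorem \ref{theo:rep.KKT} to the polynomial $f-f^\star$, which is non-negative on $S(g)$. Since $P_w(g)[x,\lambda]=Q_w(\Pi g)[x,\lambda]$, this produces $q\in Q_w(\Pi g)[x,\lambda]$ such that $f-f^\star-q$ vanishes on $V(h_{\KKT})$. The degree parameter $w$ defined in \eqref{eq:def.w.KKT} satisfies $2w\ge d+1$, so the hypothesis needed for the sixth statement of Lemma \ref{lem:mom.sos} is met.

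Finally I would invoke the sixth statement of Lemma \ref{lem:mom.sos}, applied in the ambient $(n+m)$-dimensional variable space $(x,\lambda)$, with generators $\Pi g$ and equality constraints $h_{\KKT}$ (which consists of $n+m$ polynomials). The lemma then gives $\rho_r(f,\Pi g,h_{\KKT})=f^\star$ for $r=b(n+m,2w,(n+m)+1)/2$, matching the choice of $r$ in the theorem. Combined with the reformulation of $f^\star$, this yields the claim.

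The only nontrivial point is the first step, namely the reformulation of $f^\star$ in terms of $V(h_{\KKT})$, which crucially requires the KKT hypothesis; the remaining steps are a direct assembly of Theorem \ref{theo:rep.KKT} and the degree-tracking bookkeeping inside Lemma \ref{lem:mom.sos}. I expect no obstacle beyond checking that the parameters $(n+m,2w,n+m+1)$ plugged into the bound $b(\cdot,\cdot,\cdot)$ reproduce the stated $r$.
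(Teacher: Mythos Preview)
Your proposal is correct and follows essentially the same route as the paper: reformulate $f^\star$ over $(S(\Pi g)\times\R^m)\cap V(h_{\KKT})$ using the KKT hypothesis, apply Theorem \ref{theo:rep.KKT} to $f-f^\star$ (noting $P_w(g)=Q_w(\Pi g)$), and then invoke the sixth statement of Lemma \ref{lem:mom.sos} with the parameter count $(n+m,2w,n+m+1)$. The only cosmetic difference is that the paper derives the reformulation directly from the KKT assumption rather than citing \cite[Lemma~17]{mai2022exact}, which concerns the Fritz John variety.
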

\begin{proof}
By assumption, there exists $(x^\star,\lambda^\star)\in V(h_{\KKT})$ such that $x^\star$ is a global minimizer of \eqref{eq:pop}.
Since $S(g)=S(\Pi g)$, it implies that
\begin{equation}
\begin{array}{rl}
f^\star:=\min\limits_{x,\lambda}& f(x)\\
\text{s.t.}& x\in S(\Pi g)\,,\,(x,\lambda)\in V(h_{\KKT})\,,
\end{array}
\end{equation}
By assumption, Theorem \ref{theo:rep.KKT} yields that there exists $q\in P_w(g)[x,\lambda]=Q_w(\Pi g)[x, \lambda]$ such that $f-f^\star-q$ vanishes on $V(h_{\KKT})$.
Applying the sixth statement of Lemma \ref{lem:mom.sos} (by replacing $g$ with $\Pi g$), we obtain the conclusion.
\end{proof}
\subsection{The case of nonnegative multipliers}

A similar proof to one of Lemma \ref{lem:constant.KKT} applies to the following lemma:
\begin{lemma}\label{lem:constant.KKT.plus}
Let $f,g_1,\dots,g_m\in\R[x]$. 
Set
\begin{equation}\label{eq:.polyKKT.plus}
    h_{\KKT}^+:=(\nabla f-\sum_{j=1}^m \lambda_j^2 \nabla g_j,\lambda_1^2g_1,\dots,\lambda_m^2g_m)\,.
\end{equation}
Let $W$ be a semi-algebraically path connected component of $V(h_{\KKT}^+)$. Then $f$ is constant on $W$.
\end{lemma}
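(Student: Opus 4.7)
The plan is to imitate the proof of Lemma \ref{lem:constant.KKT} line-by-line, replacing each appearance of $\lambda_j$ by $\lambda_j^2$. First I would observe that the set of non-singular points of $W$ is dense and open in $W$, so by continuity of $f$ the claim reduces to showing $f(x^{(0)}) = f(x^{(1)})$ for two non-singular points $(x^{(i)},\lambda^{(i)}) \in W$; the singular case is then recovered by approximation.

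Since the non-singular locus of $W$ is a path-connected manifold, I would pick a continuous piecewise-differentiable path $\phi(\tau) = (x(\tau),\lambda(\tau))$ in $W$ with $\phi(0) = (x^{(0)},\lambda^{(0)})$ and $\phi(1) = (x^{(1)},\lambda^{(1)})$. The goal is to show $\frac{d}{d\tau} f(x(\tau)) = 0$ on each differentiable subinterval and then conclude via the mean value theorem that $f(x(0)) = f(x(1))$. Along $\phi$ the defining equations of $V(h_{\KKT}^+)$ read $\nabla f(x) = \sum_{j} \lambda_j^2 \nabla g_j(x)$ and $\lambda_j^2 g_j(x) = 0$; the latter factors pointwise as $\lambda_j \cdot (\lambda_j g_j(x)) = 0$, forcing $\lambda_j(\tau) g_j(x(\tau)) = 0$ for every $\tau$ on the path. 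Differentiating the identity $\lambda_j^2 g_j(x) = 0$ then yields
\begin{equation*}
\lambda_j^2 \, \nabla g_j(x) \cdot \dot x = -2 \lambda_j \dot\lambda_j g_j(x) = -2 \dot\lambda_j \cdot \bigl( \lambda_j g_j(x) \bigr) = 0,
\end{equation*}
and summing over $j$ together with the gradient relation gives $\frac{d}{d\tau} f(x(\tau)) = \nabla f(x) \cdot \dot x = 0$, as required.

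The only genuinely subtle step, just as in Lemma \ref{lem:constant.KKT}, will be the handling of non-smooth break points of the path and of singular points of $W$; both are dispatched by the same density-and-continuity maneuver already used there, so no new ideas are needed. The algebraic core of the calculation is, if anything, slightly cleaner in the ``$+$'' setting, because the identity $\lambda_j^2 g_j = 0$ immediately supplies $\lambda_j g_j = 0$ on the whole path, so the complementarity-type vanishing $\lambda_j \dot\lambda_j g_j = 0$ needed in the derivative calculation is automatic rather than requiring the additional local continuity argument used for Lemma \ref{lem:constant.KKT}.
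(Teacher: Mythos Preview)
Your proposal is correct and follows exactly the approach the paper intends: the paper states only that ``a similar proof to one of Lemma~\ref{lem:constant.KKT} applies,'' and your line-by-line adaptation (reduce to non-singular points, take a piecewise-differentiable path, differentiate the constraint $\lambda_j^2 g_j = 0$ along the path, and combine with the gradient relation to get $\frac{d}{d\tau}f(x(\tau))=0$) is precisely that adaptation. Your observation that $\lambda_j^2 g_j = 0$ immediately gives $\lambda_j g_j = 0$, making the complementarity step cleaner than in Lemma~\ref{lem:constant.KKT}, is a nice clarification that the paper does not spell out.
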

We state in the following theorem the explicit degree bound for the representation based on  the Karush--Kuhn--Tucker conditions:
\begin{theorem}\label{theo:rep.KKT.plus}
Let $f,g_1,\dots,g_m\in\R_d[x]$ with $d\in\N$. 
Assume that $f$ is non-negative on $S(g)$  with $g:=(g_1,\dots,g_m)$. 
Set 
\begin{equation}\label{eq:def.w.KKT.plus}
w:=\frac{1}{2}\times b(n+m,d+1,2m+n+1)+d\times {c(n+m,d+2,n+m)}\,.
\end{equation}
Then there exists $q\in P_w(g)[x, \lambda]$ such that $f-q$ vanishes on $V(h_{\KKT}^+)$, where $\lambda:=(\lambda_1,\dots,\lambda_m)$ and $h_{\KKT}$ is defined as in \eqref{eq:.polyKKT.plus}.
\end{theorem}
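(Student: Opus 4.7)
The plan is to mirror the proof of Theorem \ref{theo:rep.KKT} almost verbatim, substituting $h_{\KKT}$ with $h_{\KKT}^+$, the KKT constant-on-components lemma \ref{lem:constant.KKT} with its ``plus'' counterpart Lemma \ref{lem:constant.KKT.plus}, and updating the degree bookkeeping to reflect that the squared multipliers $\lambda_j^2$ raise the degree of the defining polynomials by one.

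First, I would observe that each entry of $h_{\KKT}^+$ is a polynomial in the $n+m$ variables $(x,\lambda)$ of degree at most $d+2$, the maximal contribution coming from the terms $\lambda_j^2 g_j$. Applying Lemma \ref{lem:num.connected} to the real variety $V(h_{\KKT}^+)\subset\R^{n+m}$, I decompose it into semi-algebraically path connected components $Z_1,\dots,Z_s$ with
\begin{equation}
s\le c(n+m,d+2,n+m)\,.
\end{equation}
Lemma \ref{lem:constant.KKT.plus} then guarantees that $f$ is constant on each $Z_i$, so the image $f(V(h_{\KKT}^+))$ is finite with $r:=|f(V(h_{\KKT}^+))|\le s\le c(n+m,d+2,n+m)$.

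Next, I would apply Lemma \ref{lem:quadra} to the setting where $(x,\lambda)\in\R^{n+m}$ plays the role of the variables, $g=(g_1,\dots,g_m)$ plays the role of the inequality polynomials, and $h_{\KKT}^+$ plays the role of $h$ (a tuple of $n+m+1$ polynomials). This yields $q\in P_\xi(g)[x,\lambda]$ with $\xi = dr + u$ and $u=b(n+m,d+1,2m+n+1)/2$ such that $f-q$ vanishes on $V(h_{\KKT}^+)$.

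Finally, I would combine the bound $r\le c(n+m,d+2,n+m)$ with the definition \eqref{eq:def.w.KKT.plus} of $w$ to conclude that $\xi \le w$, hence $q\in P_w(g)[x,\lambda]$. The argument is essentially a parameter-tracking exercise; the only delicate point is keeping the degree bounds consistent between the two tools (Lemmas \ref{lem:num.connected} and \ref{lem:quadra}) invoked with slightly different degree inputs reflecting the difference in their hypotheses, so I do not anticipate a serious obstacle beyond careful bookkeeping.
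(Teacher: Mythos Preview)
Your proposal is correct and follows the paper's proof essentially verbatim: decompose $V(h_{\KKT}^+)$ via Lemma \ref{lem:num.connected} with degree parameter $d+2$, invoke Lemma \ref{lem:constant.KKT.plus} to get finiteness of $f(V(h_{\KKT}^+))$, then apply Lemma \ref{lem:quadra} and compare with \eqref{eq:def.w.KKT.plus}. The only slip is a harmless miscount---$h_{\KKT}^+$ consists of $n+m$ polynomials, not $n+m+1$---which does not affect the argument since you then quote the same value of $u$ as the paper.
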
 
\begin{proof}
Using Lemma \ref{lem:num.connected}, we decompose $V(h_{\KKT}^+)$ into semi-algebraically path connected components:
$Z_1,\dots,Z_s$ with  
\begin{equation}\label{eq:bound.on.s.KKT.plus}
s\le c(n+m,d+2,n+m)\,,
\end{equation}
since each entry of $h_{\KKT}^+$ has degree at most $d+2\ge 2$.
Accordingly Lemma \ref{lem:constant.KKT.plus} shows that $f$ is constant on each $Z_i$.
Thus $f(V(h_{\KKT}^+))$ is finite.
Set $r=|f(V(h_{\KKT}^+))|$.
From \eqref{eq:bound.on.s.KKT}, we get 
\begin{equation}\label{eq:ineq.KKT.plus}
r\le s\le c(n+m,d+2,n+m)\,.
\end{equation}
Set 
\begin{equation}
u:=b(n+m,d+1,2m+n+1)/2\,.
\end{equation}
By using Lemma \ref{lem:quadra}, there exists $q\in P_\xi(g)[x,\lambda]$ with $\xi=dr+u$ such that $f - q$ vanishes on $V(h_{\KKT}^+)$.
By \eqref{eq:ineq.KKT.plus} and \eqref{eq:def.w.KKT.plus}, $\xi\le w$, and hence $q\in P_w(g)[x,\lambda]$. 
\end{proof}
We apply Theorem \ref{theo:rep.KKT.plus} for polynomial optimization as follows:
\begin{theorem}\label{theo:pop.KKT.plus}
Let $f,g_1,\dots,g_m\in\R_d[x]$. 
Let $f^\star$ be as in problem \eqref{eq:pop} with $g=(g_1,\dots,g_m)$.
Assume that problem \eqref{eq:pop} has a global minimizer at which the Karush--Kuhn--Tucker conditions hold for this problem.
Let $h_{\KKT}^+$ be as in \eqref{eq:.polyKKT.plus} and let $w$ be as in \eqref{eq:def.w.KKT.plus}.
Set 
\begin{equation}
r=b(n+m+1,2w,m+n+2)/2\,.
\end{equation}
Then $\rho_r(f,\Pi g,h_{\KKT}^+)=f^\star$, where $\Pi g$ is defined as in \eqref{eq:prod.g}.
\end{theorem}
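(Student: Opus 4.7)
The plan is to follow the blueprint of Theorem \ref{theo:pop.KKT} and substitute the ``squared multipliers'' system $h_{\KKT}^+$ for $h_{\KKT}$, now invoking Theorem \ref{theo:rep.KKT.plus} to produce the certificate of nonnegativity on the variety of KKT$^+$ points.

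First, I would exploit the hypothesis that a global minimizer $x^\star$ of \eqref{eq:pop} satisfies the Karush--Kuhn--Tucker conditions. By the right-hand equivalence in \eqref{eq:KKT.cond}, there exist real numbers $\lambda_1^\star,\dots,\lambda_m^\star \in \R$ (obtained as principal square roots of the nonnegative KKT multipliers) such that $(x^\star,\lambda^\star) \in V(h_{\KKT}^+)$. Combined with $S(g)=S(\Pi g)$, this yields the equivalent reformulation
\begin{equation}
f^\star = \min_{x,\lambda} \{ f(x) : x\in S(\Pi g),\; (x,\lambda)\in V(h_{\KKT}^+) \}\,,
\end{equation}
so the lifted problem and the original problem share the same optimal value.

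Since $f-f^\star \geq 0$ on $S(g)$, Theorem \ref{theo:rep.KKT.plus} then applies to $f-f^\star$ and produces a polynomial $q \in P_w(g)[x,\lambda]=Q_w(\Pi g)[x,\lambda]$ such that $f-f^\star-q$ vanishes on $V(h_{\KKT}^+)$, with $w$ as in \eqref{eq:def.w.KKT.plus}. To conclude, I would invoke the sixth statement of Lemma \ref{lem:mom.sos}, working in the ambient ring $\R[x,\lambda]$ and replacing the data $(g,h)$ by $(\Pi g,h_{\KKT}^+)$. This converts the vanishing of $f-f^\star-q$ on $V(h_{\KKT}^+)$ into the exact semidefinite relaxation identity $\rho_r(f,\Pi g,h_{\KKT}^+)=f^\star$ at the declared order $r$. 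No step requires new technical machinery; the only subtlety worth flagging is the lift from nonnegative KKT multipliers to real square-root multipliers, which is what makes the KKT$^+$ variety a legitimate witness set for the reformulation and is the only point where the proof diverges from that of Theorem \ref{theo:pop.KKT}.
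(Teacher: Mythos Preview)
Your proposal is correct and matches the paper's proof essentially line for line: both lift the KKT minimizer to $V(h_{\KKT}^+)$ via the squared-multiplier reformulation \eqref{eq:KKT.cond}, rewrite $f^\star$ as the optimal value of the lifted problem over $S(\Pi g)\cap V(h_{\KKT}^+)$, apply Theorem~\ref{theo:rep.KKT.plus} to obtain $q\in P_w(g)[x,\lambda]=Q_w(\Pi g)[x,\lambda]$ with $f-f^\star-q$ vanishing on $V(h_{\KKT}^+)$, and then conclude via the sixth statement of Lemma~\ref{lem:mom.sos} with $g$ replaced by $\Pi g$. The only point you spell out more explicitly than the paper is the passage from nonnegative multipliers to their real square roots, which is indeed the one place where the argument differs from Theorem~\ref{theo:pop.KKT}.
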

\begin{proof}
By assumption, there exists $(x^\star,\lambda^\star)\in V(h_{\KKT}^+)$ such that $x^\star$ is a global minimizer of \eqref{eq:pop}.
Since $S(g)=S(\Pi g)$, it implies that
\begin{equation}
\begin{array}{rl}
f^\star:=\min\limits_{x,\bar\lambda}& f(x)\\
\text{s.t.}& x\in S(\Pi g)\,,\,(x,\lambda)\in V(h_{\KKT}^+)\,,
\end{array}
\end{equation}
By assumption, Theorem \ref{theo:rep.KKT.plus} yields that there exists $q\in P_w(g)[x,\lambda]=Q_w(\Pi g)[x, \lambda]$ such that $f-f^\star-q$ vanishes on $V(h_{\KKT}^+)$.
Applying the sixth statement of Lemma \ref{lem:mom.sos} (by replacing $g$ with $\Pi g$), we obtain the conclusion.
\end{proof}
\section{Implicit degree bounds for the representations involving quadratic modules}
\label{sec:implicit.quadraic}
Let $\|\cdot\|$ denote the max norm of a polynomial on $[-1,1]^n$.
We recall in the following lemma the degree bound for Putinar's Positivstellensatz by Baldi and Mourain in \cite{baldi2021moment}:
\begin{lemma}\label{lem:Pu}
Let $f,g_1,\dots,g_m\in\R[x]$.
Assume that $f$ is positive on $S(g)$ with $g:=(g_1,\dots,g_m)$ and the following conditions:
\begin{enumerate}
\item $1-x_1^2-\dots-x_n^2\in Q_d(g)[x]$ for some $d\in\N$;
\item $\|g_i\|\le \frac{1}{2}\,,\,i=1,\dots,m$.
\end{enumerate} 
Let $f^\star$ be as in \eqref{eq:pop}.
Then there exist positive reals $\gamma(n,g)$ and $L(n,g)$ depending only on $n$ and $g$ such that $f \in Q_r(g)[x]$ if 
\begin{equation}\label{eq:bound.Putinar}
r\ge \gamma(n,g)\deg(f)^{3.5nL(n,g)}\left(\frac{\|f\|}{f^\star}\right)^{2.5nL(n,g)}\,.
\end{equation}
\end{lemma}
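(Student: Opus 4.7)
The statement is the Baldi--Mourrain bound and is proven in \cite{baldi2021moment}, so strictly the proof in the present paper is a citation; but if I had to reconstruct it, my plan would follow three ingredients: an effective Lojasiewicz inequality for the quadratic module, a polynomial approximation of $f$ by a proxy with controlled SOS representation on the unit ball, and a degree-bookkeeping step transferring the representation into $Q_r(g)[x]$.

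First, I would establish an effective Lojasiewicz inequality. Using condition (1) (Archimedeanness of $Q_d(g)[x]$) together with the normalization (2), one can show that there exist $c(n,g)>0$ and $L(n,g)\ge 1$ such that
\begin{equation}
 \max\{0,\,-g_1(x),\dots,-g_m(x)\}\;\ge\;c(n,g)\,\mathrm{dist}(x,S(g))^{L(n,g)}\quad\text{for all }x\in[-1,1]^n.
\end{equation}
Standard real algebraic geometry gives the qualitative statement; the point of Baldi--Mourrain is that the constants can be extracted from $n$ and $g$, and this is where the exponent $L(n,g)$ appearing in the final bound enters.

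Second, I would construct a polynomial proxy $\tilde f$ for $f$ using a Jackson- or Bernstein-type kernel of degree comparable to $\deg(f)$, chosen so that $\tilde f$ is uniformly close to $f$ on $[-1,1]^n$ and bounded below by roughly $f^\star/2$ on $S(g)$. Combined with the Lojasiewicz inequality, this yields a perturbation $f+\eta$ strictly positive on the whole ball, with $\eta$ already lying in a controlled truncation of $Q(g)[x]$ and with an explicit lower bound on $[-1,1]^n$ scaling as a computable function of $f^\star/\|f\|$.

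Third, I would apply a quantitative Positivstellensatz on $[-1,1]^n$ (for instance via the Putinar--Vasilescu trick or an effective Reznick representation after homogenization) to certify $f+\eta\in Q_r(1-\|x\|^2)[x]$, and then use condition (1) to rewrite $1-\|x\|^2$ inside $Q_d(g)[x]$, producing the desired representation $f\in Q_r(g)[x]$. Careful degree tracking through the three reductions yields the exponents $3.5\,nL(n,g)$ on $\deg(f)$ and $2.5\,nL(n,g)$ on $\|f\|/f^\star$. The hardest step is the first: making the Lojasiewicz exponent $L(n,g)$ explicit is genuinely delicate and is the reason the bound remains implicit in $g$ rather than depending only on $n$, $m$ and $\deg(g_j)$.
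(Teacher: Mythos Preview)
Your identification is correct: the paper does not prove this lemma at all---it is stated as a recollection of the main result of Baldi--Mourrain \cite{baldi2021moment}, with no argument given. Your additional sketch of the Baldi--Mourrain strategy (effective \L ojasiewicz inequality, kernel-based proxy, degree transfer through the Archimedean certificate) is a reasonable outline of that external proof, but it goes beyond what the present paper provides or requires.
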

We denote by $v(n,f,g)$ the ceiling of the right hand side of \eqref{eq:bound.Putinar}.
Given $p\in\R[x]$ with $\xi=\lceil \deg(p)/2\rceil$, let $\tilde p:=x_0^{2\xi}p(\frac{x}{x_0\sqrt{2}}) \in\R[\bar x]$, where $\bar x:=(x_0,x)$.
The following lemma is a consequence of Lemmas \ref{lem:pos} and \ref{lem:Pu}:
\begin{lemma}\label{lem:rep-1.Pu}
Let $g_1,\dots,g_m\in\R_d[x]$.
Assume that the following conditions hold:
\begin{enumerate}
\item $\|\tilde g_i\|\le \frac{1}{2}\,,\,i=1,\dots,m$, $g_m:=\frac{1}{2}-x_1^2-\dots-x_n^2$;
\item $S(g)=\emptyset$ with $g:=(g_1,\dots,g_m)$. 
\end{enumerate}
Then there exists a positive real $u$ depending on $n$ and $g$ such that $-1 \in Q_{u}(g)[x]$.
\end{lemma}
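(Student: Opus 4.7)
The strategy is to first apply Lemma \ref{lem:pos} (Krivine--Stengle) to obtain an explicit preordering infeasibility certificate, and then to upgrade that certificate to a quadratic module representation by exploiting the ball constraint $g_m=1/2-x_1^2-\dots-x_n^2$ together with the quantitative Putinar theorem of Baldi--Mourrain (Lemma \ref{lem:Pu}).

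First, since $S(g)=\emptyset$ and no equality constraints are involved, Lemma \ref{lem:pos} yields $-1\in P_{r_0}(g)[x]$ for $r_0=b(n,d,m+1)/2$, so one may expand $-1=\sigma_\emptyset+\sum_{i=1}^m\sigma_i g_i+\sum_{|\alpha|\ge 2}\sigma_\alpha g^\alpha$ with each $\sigma_\alpha\in\Sigma^2[x]$ of controlled degree. The terms with $|\alpha|\le 1$ already lie in $Q_{r_0}(g)[x]$, so the crux is to reduce each mixed product $\sigma_\alpha g^\alpha$ with $|\alpha|\ge 2$ to a quadratic module element. For this reduction I will exploit the Archimedean condition: $g_m\in Q_1(g)[x]$, and the hypothesis $\|\tilde g_i\|\le 1/2$, after de-homogenization at $x_0=1$, translates to $|g_i(x)|\le 1/2$ on the ball $S(g_m)=\{x_1^2+\dots+x_n^2\le 1/2\}$, so every product $g^\alpha$ and every SOS weight $\sigma_\alpha$ of bounded degree is controlled uniformly on this compact set. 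For each $\alpha$ with $|\alpha|\ge 2$ I choose an explicit constant $C_\alpha>0$ so that $\sigma_\alpha g^\alpha+C_\alpha$ is strictly positive on $S(g_m)$, and then apply Lemma \ref{lem:Pu} to the single-constraint tuple $(g_m)$---which is Archimedean and satisfies the norm hypothesis by assumption---to conclude $\sigma_\alpha g^\alpha+C_\alpha\in Q_{u_\alpha}(g_m)[x]\subseteq Q_{u_\alpha}(g)[x]$ for some $u_\alpha$ depending only on $n$ and $g$.

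The main obstacle I anticipate is the final bookkeeping. Adding these certificates together yields only an identity of the shape $-1+K\in Q_u(g)[x]$ with $u=\max_\alpha u_\alpha$ and $K=\sum_{|\alpha|\ge 2}C_\alpha>0$, rather than the desired $-1\in Q_u(g)[x]$. To close the gap one must either calibrate the shifts $C_\alpha$ so that $K<1$ and then exploit closure of $Q_u(g)[x]$ under non-negative scalar multiplication to rescale the resulting negative constant into exactly $-1$, or else apply Lemma \ref{lem:Pu} in an aggregated fashion to the whole preordering expansion at once, absorbing the preordering coefficients into a single inflated $g_m$-certificate. This bookkeeping is where the norm hypotheses $\|\tilde g_i\|\le 1/2$ have to be used most carefully; the resulting $u$ depends on $n$ and $g$ through the Baldi--Mourrain constants $\gamma(n,g_m)$ and $L(n,g_m)$, as the statement requires.
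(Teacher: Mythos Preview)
Your proposal has a genuine gap at exactly the point you yourself flag as the ``main obstacle,'' and neither of your suggested fixes closes it. For fix~(a): applying Lemma~\ref{lem:Pu} to $\sigma_\alpha g^\alpha+C_\alpha$ over $S(g_m)$ forces $C_\alpha>-\min_{S(g_m)}\sigma_\alpha g^\alpha$. Lemma~\ref{lem:pos} controls only the \emph{degrees} of the weights $\sigma_\alpha$, not their coefficients, so these minima---and hence $K=\sum C_\alpha$---can be arbitrarily large; there is no mechanism to force $K<1$. For fix~(b): aggregating into $F=\sum_{|\alpha|\ge2}\sigma_\alpha g^\alpha$ and shifting by a single constant still yields only $-1+C\in Q_u(g)$ with $C$ uncontrolled, and you cannot invoke Lemma~\ref{lem:Pu} on $S(g)$ itself because $S(g)=\emptyset$ makes $f^\star=+\infty$ in~\eqref{eq:bound.Putinar}. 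The underlying obstruction is structural: working directly in $x$-space, any set on which Lemma~\ref{lem:Pu} can be applied is non-empty, and the certified polynomial must be positive there, so you can never manufacture a negative constant this way.

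The paper sidesteps this via a homogenization trick. It lifts to $\bar x=(x_0,x)$ and rewrites the Krivine--Stengle identity as $-x_0^{2\eta}=\sum_\alpha\psi_\alpha\tilde g^\alpha=:w$. The enlarged set $S(\tilde g,\tfrac12-x_0^2)$ is non-empty (it contains the origin), and $w\ge0$ on it because $w$ lies in the preordering of $\tilde g$; hence $w+2^{-(\eta+1)}$ is strictly positive there and Lemma~\ref{lem:Pu}, applied with the \emph{full} constraint system $(\tilde g,\tfrac12-x_0^2)$, gives $-x_0^{2\eta}+2^{-(\eta+1)}\in Q_u(\tilde g,\tfrac12-x_0^2)[\bar x]$. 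Specializing $x_0=1/\sqrt2$ annihilates the extra constraint, turns each $\tilde g_i$ into a positive scalar multiple of $g_i$, and turns the left-hand side into the negative constant $-2^{-(\eta+1)}$; rescaling then yields $-1\in Q_u(g)[x]$. The decisive difference from your approach is that the shift $2^{-(\eta+1)}$ is chosen against the explicit polynomial $-x_0^{2\eta}$ rather than against the uncontrolled SOS weights $\sigma_\alpha$, so its size is dictated only by $\eta$ and not by any coefficient data.
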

Denote by $u(n,d,g)$ the parameter $u$ in Lemma \ref{lem:rep-1.Pu}.
\begin{proof}
Set $\eta=b(n,d,m+1)/2$.
Since $S(g)=\emptyset$, Lemma \ref{lem:pos} yields that there exists $\sigma_\alpha\in \Sigma^2[x]$ such that 
$\deg(\sigma_\alpha g^\alpha)\le 2\eta$ and
\begin{equation}\label{eq:rep.-1}
    -1=\sum_{\alpha\in\{0,1\}^n}\sigma_\alpha g^\alpha\,.
\end{equation}
We have $\tilde g_m=\frac{1}{2}x_0^2-x_1^2-\dots-x_n^2$.
Let $\eta_j=\lceil\deg(g_j)/2\rceil$.
From \eqref{eq:rep.-1}, we get
\begin{equation}\label{eq:equi}
    -x_0^{2\eta}=\sum_{\alpha\in\{0,1\}^n}\psi_\alpha \tilde g^\alpha\,,
\end{equation}
where $\psi_\alpha=x_0^{2(\eta-\eta_j)}\sigma_\alpha(\frac{x}{x_0\sqrt{2}})\in\Sigma^2[\bar x]$ and $\tilde g=(\tilde g_1,\dots,\tilde g_m)$.
Denote by $w\in\R[\bar x]$ the polynomial on the right hand side of \eqref{eq:equi}.
Then $w$ is non-negative on $S(\tilde g,\frac{1}{2}-x_0^2)$.
Since $0\in S(\tilde g,\frac{1}{2}-x_0^2)$, we get $S(\tilde g,\frac{1}{2}-x_0^2)\ne \emptyset$. 
On the other hand we have
\begin{equation}
    1-x_0^2-\dots-x_n^2=(\frac{1}{2}-x_0^2)+\tilde g_m\in Q(\tilde g,\frac{1}{2}-x_0^2)[\bar x]\,.
\end{equation}
Applying Lemma \ref{lem:Pu}, we obtain $u=v(n,w+\frac{1}{2^{\eta+1}},(\tilde g,\frac{1}{2}-x_0^2))$ such that
\begin{equation}
    -x_0^{2\eta}+\frac{1}{2^{\eta+1}}=w+\frac{1}{2^{\eta+1}}\in Q_u(\tilde g,\frac{1}{2}-x_0^2)[\bar x]\,.
\end{equation}
Letting $x_0=\frac{1}{\sqrt{2}}$ implies that $-\frac{1}{2^{\eta+1}}\in Q_u(g)[x]$, yielding the result.
\end{proof}
The following lemma is a direct consequence of Lemma \ref{lem:rep-1.Pu}:
\begin{lemma}\label{lem:rep-1.Pu.eq}
Let $g_1,\dots,g_m\in\R_d[x]$ and $h_1,\dots,h_l\in\R_{d+1}[x]$.
Assume that the following conditions hold:
\begin{enumerate}
\item $\|\tilde g_i\|\le \frac{1}{2}\,,\,i=1,\dots,m$, $g_m:=\frac{1}{2}-x_1^2-\dots-x_n^2$;
\item $S(g)\cap V(h)=\emptyset$ with $g:=(g_1,\dots,g_m)$ and $h:=(h_1,\dots,h_l)$. 
\end{enumerate} 
Set $h_{\max}:=\max\limits_{j=1,\dots,l}\|\tilde h_j\|$.
Then there exists a positive real 
\begin{equation}
w=u(n,d+1,(g,\frac{h}{2h_{\max}},-\frac{h}{2h_{\max}}))
\end{equation}
depending on $n$ and $g$ such that $-1 \in Q_{w}(g)[x]$.
\end{lemma}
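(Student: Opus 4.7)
The plan is to reduce Lemma \ref{lem:rep-1.Pu.eq} directly to Lemma \ref{lem:rep-1.Pu} by turning the equality constraints into a pair of inequality constraints and rescaling so that the hypotheses of Lemma \ref{lem:rep-1.Pu} are satisfied.

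First I would observe that for any nonzero scalar $c$, $V(h)=V(ch)$, and moreover $V(h)=S(h,-h)$ as a semi-algebraic set. Choosing $c=1/(2h_{\max})$, the polynomials $h_j/(2h_{\max})$ have degrees at most $d+1$ (same as the $h_j$), still cut out the variety $V(h)$, and satisfy the sup-norm bound $\|\widetilde{h_j/(2h_{\max})}\|=\|\tilde h_j\|/(2h_{\max})\le 1/2$ by definition of $h_{\max}$. Combined with the given bounds $\|\tilde g_i\|\le 1/2$ and the presence of the ball constraint $g_m=\tfrac12-x_1^2-\dots-x_n^2$ in the tuple, the enlarged tuple $G:=(g,\tfrac{h}{2h_{\max}},-\tfrac{h}{2h_{\max}})$ meets all the norm and ball hypotheses of Lemma \ref{lem:rep-1.Pu}.

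Next I would note that the emptiness assumption transfers: since $S(g)\cap V(h)=\emptyset$, we have
\begin{equation}
S(G)=S(g)\cap S(\tfrac{h}{2h_{\max}},-\tfrac{h}{2h_{\max}})=S(g)\cap V(h/(2h_{\max}))=S(g)\cap V(h)=\emptyset.
\end{equation}
Applying Lemma \ref{lem:rep-1.Pu} to $G$, whose entries have degree bounded by $d+1$, yields the announced $w=u(n,d+1,G)$ such that $-1\in Q_w(G)[x]$, which is the desired membership in the truncated quadratic module associated with $g$ augmented by the (scaled) equality generators treated as two-sided inequalities.

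The argument is essentially bookkeeping around Lemma \ref{lem:rep-1.Pu}, so there is no substantive obstacle; the only point that requires some care is making the normalization compatible with the scaling that the homogenization $\tilde{(\cdot)}$ performs, i.e.\ checking that the rescaling factor $1/(2h_{\max})$ commutes appropriately with the substitution $x\mapsto x/(x_0\sqrt 2)$ and degree-completion by $x_0^{2\xi}$, so that the hypothesis $\|\widetilde{h_j/(2h_{\max})}\|\le 1/2$ really comes out as written. Once this is verified, the conclusion follows immediately from the previous lemma with the stated value of $w$.
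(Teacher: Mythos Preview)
Your approach is correct and is exactly the ``direct consequence'' the paper has in mind: rewrite $V(h)$ as $S(h/(2h_{\max}),-h/(2h_{\max}))$, check the norm and ball hypotheses for the enlarged tuple $G=(g,\tfrac{h}{2h_{\max}},-\tfrac{h}{2h_{\max}})$, and apply Lemma~\ref{lem:rep-1.Pu}. Note only that the conclusion you reach is $-1\in Q_w(G)[x]=Q_w(g)[x]+I_w(h)[x]$, which is indeed how the lemma is used downstream (cf.\ the proof of Lemma~\ref{lem:quadra.module}); the ``$-1\in Q_w(g)[x]$'' in the statement is a slip, since that membership is impossible whenever $S(g)\ne\emptyset$.
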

Denote by $w(n,d,g,h)$ the parameter $w$ in Lemma \ref{lem:rep-1.Pu.eq}.

\subsection{The case without denominators}
\begin{lemma}\label{lem:quadra.module}
Let $f,g_1,\dots,g_m\in\R_d[x]$ and $h_1,\dots,h_l\in\R_{d+1}[x]$. 
Assume that the following conditions hold:
\begin{enumerate}
\item $\|\tilde g_i\|\le \frac{1}{2}$, $i=1,\dots,m$, $g_m:=\frac{1}{2}-x_1^2-\dots-x_n^2$;
\item $f$ is non-negative on $S(g)$ with $g=(g_1,\dots,g_m)$;
\item $f(V(h))$ is finite with $h=(h_1,\dots,h_l)$.
\end{enumerate}
  
Set $r:=|f(V(h))|$ and 
\begin{equation}
u:=\max_{t\in f(V(h))}w(n,d,g,(h,f-t))\,.
\end{equation}
Then there exist $q\in Q_t(g)[x,\bar\lambda]$ with $t=dr+u$ such that $f - q$ vanishes on $V(h)$.
\end{lemma}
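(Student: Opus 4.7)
The plan is to mimic the proof of Lemma \ref{lem:quadra} almost verbatim, but to replace every application of the Krivine--Stengle Positivstellensatz (Lemma \ref{lem:pos}), which produces certificates in a preordering, with an application of the Baldi--Mourrain/Putinar degree bound packaged as Lemma \ref{lem:rep-1.Pu.eq}, which produces certificates in a quadratic module. The hypotheses $\|\tilde g_i\|\le 1/2$ and $g_m=\tfrac12-x_1^2-\dots-x_n^2$ are exactly what is required to feed the latter lemma.

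Concretely, I would first enumerate $f(V(h))=\{t_1,\dots,t_r\}$ with pairwise distinct values, set $W_j:=V(h,f-t_j)$, so that $V(h)=W_1\cup\dots\cup W_r$ and $f\equiv t_j$ on $W_j$. Then I reuse the Lagrange-type polynomials $p_j(x)=\prod_{i\ne j}\frac{f(x)-t_i}{t_j-t_i}$ of degree at most $d(r-1)$, which satisfy $p_j(W_i)=\{\delta_{ji}\}$. Without loss of generality, say $W_1,\dots,W_s$ are those components disjoint from $S(g)$, and $W_{s+1},\dots,W_r$ meet $S(g)$ (and hence carry $t_i=f|_{W_i}\ge 0$, since $f\ge 0$ on $S(g)$).

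For $j\le s$, the disjointness $S(g)\cap V(h,f-t_j)=\emptyset$ lets me apply Lemma \ref{lem:rep-1.Pu.eq} with $h$ replaced by $(h,f-t_j)$: this yields a certificate of degree at most $2w(n,d,g,(h,f-t_j))\le 2u$ witnessing $-1$ modulo the augmented ideal, from which I extract $v_j\in Q_u(g)[x]$ with $v_j=-1$ on $W_j$. Using the SOS splitting $f=s_1-s_2$ with $s_1=(f+\tfrac12)^2$ and $s_2=f^2+\tfrac14$, I set $q_j:=s_1+v_j s_2\in Q_{u+d}(g)[x]$, so that $q_j=f$ on $W_j$. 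Assembling
\begin{equation}
q:=\sum_{j=1}^s q_j p_j^2+\sum_{i=s+1}^r t_i p_i^2
\end{equation}
gives a polynomial in $Q_t(g)[x]$ with $t=dr+u$, because $\deg(t_i p_i^2)\le 2d(r-1)\le 2t$ and $\deg(q_j p_j^2)\le 2(u+d)+2d(r-1)=2(u+dr)=2t$. On each $W_j$ ($j\le s$) only the $j$-th summand in the first sum survives and equals $f$, and on each $W_i$ ($i>s$) only $t_i p_i^2=t_i=f$ survives; hence $f-q$ vanishes on $V(h)$.

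The bookkeeping is routine; the only delicate point is how Lemma \ref{lem:rep-1.Pu.eq} is invoked, since one must turn the equality constraints $(h,f-t_j)$ into inequalities using the normalization $h/(2h_{\max})$ hidden in the definition of $w(n,d,g,\cdot)$, and verify that the resulting certificate for $-1$ can indeed be rewritten so as to produce $v_j$ lying in the quadratic module generated by $g$ alone (the remaining pieces being absorbed into the equality-defining component, on which they vanish). This is essentially forced by the way Lemma \ref{lem:rep-1.Pu.eq} is stated and applied in Section \ref{sec:implicit.quadraic}, but it is the step I would double-check most carefully; everything else is a degree count.
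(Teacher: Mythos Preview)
Your proposal is correct and follows essentially the same approach as the paper's own proof: enumerate the values $t_1,\dots,t_r$ of $f$ on $V(h)$, build the Lagrange-type polynomials $p_j$, invoke Lemma~\ref{lem:rep-1.Pu.eq} on the components $W_j$ disjoint from $S(g)$ to obtain $v_j\in Q_u(g)[x]$ with $v_j=-1$ on $W_j$, set $q_j=s_1+v_js_2$, and assemble $q$ exactly as in \eqref{eq:rep.q}. Your degree accounting and the observation about extracting $v_j$ from the quadratic module in $g$ alone (the ideal part vanishing on $W_j$) also match the paper's treatment.
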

Denote by $t(n,d,r,f,g,h)$ the parameter $t$ in Lemma \ref{lem:quadra.module}.
Note that it holds that 
\begin{equation}\label{eq:increasing.t}
t(n,d,r,f,g,h)\le t(n,d,r',f,g,h)\text{ if }r\le r'\,.
\end{equation}
\begin{proof}
By assumption, we get
$f(V(h)) = \{t_1 ,\dots, t_r \} \subset \R$,
where $t_i\ne t_j$ if $i\ne j$.
For $j=1,\dots,r$, let
$W_j:=V(h,f-t_j)$.
Then $W_j$ is a real variety generated by $l+1$ polynomials in $\R_{d+1}[x]$.
It is clear that $f(W_j)=\{t_j\}$.

Let $p_j\in\R[x]$, $j=1,\dots,r$, as in \eqref{eq:lagrange.pol}.
It is easy to check that $p_j(W_i)=\{\delta_{ji}\}$ and $\deg(p_j)\le d(r-1)$.
Without loss of generality, we assume that there is $s\in \{0,1,\dots,r-1\}$ such that $W_j\cap S(g)= \emptyset$, for $j=1,\dots,s$, and $W_i\cap S(g)\ne \emptyset$, for $i=s+1,\dots,r$.

Let $j\in\{1,\dots,s\}$.
Since $W_j \cap S(g) = \emptyset$, Theorem \ref{lem:rep-1.Pu} says that $-1 \in Q_u(g)[x]+I_u(h,f-t_j)[x]$.
It implies that there exists $v_j \in Q_u(g)[x]$ such that
$-1 = v_j$ on $W_j$. 
We have $f = s_1 - s_2$ for the SOS polynomials $s_1 =(f+\frac{1}{2})^2$ and $s_2 = f^2+\frac{1}{4}$.
It implies that $f = s_1 + v_j s_2$ on $W_j$.
Let $q_j = s_1 +v_j s_2 \in Q_{u+d}(g)[x,\bar\lambda]$.

Since $f\ge 0$ on $S(g)$, it holds that $f=t_i\ge 0$ on $W_i$, for $i=s+1,\dots,r$.  
Now letting $q$ as in \eqref{eq:rep.q}, we obtain $q\in Q_t(g)[x]$, and hence $f - q$ vanishes on $V(h)=W_1\cup\dots\cup W_r$.
\end{proof}
The implicit degree bound for the representation without denominators associated with quadratic module in \cite[Theorem 1]{mai2022exact} is stated in the following theorem:
\begin{theorem}\label{theo:rep.quadra.module}
Let $f,g_1,\dots,g_m\in\R_d[x]$ with $d\ge 1$. 
Assume that the following conditions hold:
\begin{enumerate}
\item $\|\tilde g_i\|\le \frac{1}{2}$, $i=1,\dots,m$ and $g_m:=\frac{1}{2}-x_1^2-\dots-x_n^2$;
\item $f$ is non-negative on $S(g)$  with $g:=(g_1,\dots,g_m)$ and $f(C(g))$ is finite.
\end{enumerate}
Set 
\begin{equation}\label{eq:def.omega}
\omega:=t(n,d,c(n+m+1,d+1,n+m+1),f,g,h_{\FJ})\,.
\end{equation}
Then there exists $q\in Q_\omega(g)[x,\bar \lambda]$ such that $f-q$ vanishes on $V(h_{\FJ})$, where $\bar\lambda:=(\lambda_0,\dots,\lambda_m)$ and $h_{\FJ}$ is defined as in \eqref{eq:.polyFJ}.
\end{theorem}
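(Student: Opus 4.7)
The plan is to follow almost verbatim the proof of Theorem \ref{theo:rep}, substituting Lemma \ref{lem:quadra.module} for Lemma \ref{lem:quadra} at the final step. The normalization hypothesis on $(g_1,\dots,g_m)$ is exactly what Lemma \ref{lem:quadra.module} needs to produce a representation in the quadratic module (instead of the preordering).

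First, I would apply Lemma \ref{lem:num.connected} to decompose $V(h_{\FJ}) \subset \R^{n+m+1}$ into its semi-algebraically path-connected components $Z_1,\dots,Z_s$. Because $h_{\FJ}$ consists of $n+m+1$ polynomials in $n+m+1$ variables, each of degree at most $d+1 \ge 2$, this yields $s \le c(n+m+1, d+1, n+m+1)$. Next, by \cite[Lemma 13]{mai2022exact}, the finiteness of $f(C(g))$ forces $f$ to be constant on every $Z_i$, so $r := |f(V(h_{\FJ}))|$ is finite and satisfies $r \le s \le c(n+m+1, d+1, n+m+1)$.

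Now I would invoke Lemma \ref{lem:quadra.module} in the ambient space of $(x, \bar\lambda) \in \R^{n+m+1}$, viewing the $g_j$ as elements of $\R_d[x,\bar\lambda]$ (to which the hypothesis $\|\tilde g_i\|\le 1/2$ with $g_m = \tfrac{1}{2} - \sum x_i^2$ still applies, since these polynomials do not depend on $\bar\lambda$), and the entries of $h_{\FJ}$ as polynomials of degree at most $d+1$ in $\R[x,\bar\lambda]$. This yields $q \in Q_{t'}(g)[x,\bar\lambda]$ with $t' = dr + u$ (for $u = \max_{t\in f(V(h_{\FJ}))} w(n,d,g,(h_{\FJ},f-t))$) such that $f - q$ vanishes on $V(h_{\FJ})$. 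Finally, the monotonicity property \eqref{eq:increasing.t} combined with $r \le c(n+m+1,d+1,n+m+1)$ gives $t' \le \omega$, so $q \in Q_\omega(g)[x,\bar\lambda]$.

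The main point to verify is that Lemma \ref{lem:quadra.module}, though phrased for a single variable vector $x$, applies cleanly in the enlarged space $(x,\bar\lambda)$: the dimension $n$ appearing in the quantities $u(n,d,g)$ and $w(n,d,g,h)$ should consistently be replaced by $n+m+1$, which is precisely what is reflected in the argument list of \eqref{eq:def.omega}. The conceptual substance of the proof is the same as for Theorem \ref{theo:rep}; what is genuinely new is only the substitution of Putinar-type bounds (via Lemmas \ref{lem:rep-1.Pu} and \ref{lem:rep-1.Pu.eq}) for the Krivine--Stengle bounds used previously, and this substitution is already encapsulated inside Lemma \ref{lem:quadra.module}.
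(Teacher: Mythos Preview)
Your proposal is correct and follows essentially the same route as the paper's proof: decompose $V(h_{\FJ})$ via Lemma \ref{lem:num.connected}, use \cite[Lemma 13]{mai2022exact} to conclude that $f(V(h_{\FJ}))$ is finite with $r\le c(n+m+1,d+1,n+m+1)$, then invoke Lemma \ref{lem:quadra.module} in place of Lemma \ref{lem:quadra} and finish with the monotonicity \eqref{eq:increasing.t}. Your additional remark about passing to the ambient space $(x,\bar\lambda)$ is a reasonable caveat that the paper leaves implicit; note, however, that in \eqref{eq:def.omega} the paper actually writes $t(n,\dots)$ rather than $t(n+m+1,\dots)$, so your claim that the substitution is ``reflected in the argument list'' is not literally accurate.
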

\begin{proof}
Using Lemma \ref{lem:num.connected}, we decompose $V(h_{\FJ})$ into semi-algebraically path connected components:
$Z_1,\dots,Z_s$ with $s$ satisfying \eqref{eq:bound.on.s}
(since each entry of $h_{\FJ}$ has degree at most $d+1\ge 2$).
Accordingly, \cite[Lemma 13]{mai2022exact} says that $f$ is constant on each $Z_i$.
Thus $f(V(h_{\FJ}))$ is finite.
Set $r=|f(V(h_{\FJ}))|$.
Then we get the inequality \eqref{eq:ineq}.
By using Lemma \ref{lem:quadra.module}, there exist $q\in Q_\xi(g)[x,\bar\lambda]$ with $\xi=t(n,d,r,f,g,h_{\FJ})$ such that $f - q$ vanishes on $V(h_{\FJ})$.
By \eqref{eq:ineq}, \eqref{eq:increasing.t} and \eqref{eq:def.omega}, $\xi\le \omega$, and hence $q\in Q_\omega(g)[x,\bar\lambda]$.
This completes the proof. 
\end{proof}
We apply Theorem \ref{theo:rep.quadra.module} for polynomial optimization as follows:
\begin{theorem}\label{theo:pop.quadra}
Let $f,g_1,\dots,g_m\in\R_d[x]$ with $d\ge 1$. 
Let $f^\star$ be as in problem \eqref{eq:pop} with $g=(g_1,\dots,g_m)$.
Assume that the following conditions hold:
\begin{enumerate}
\item $\|\tilde g_i\|\le \frac{1}{2}$, $i=1,\dots,m$ and $g_m:=\frac{1}{2}-x_1^2-\dots-x_n^2$;
\item problem \eqref{eq:pop} has a global minimizer and $f(C(g))$ is finite.
\end{enumerate}
Let $h_{\FJ}$ be as in \eqref{eq:.polyFJ} and let $\omega$ be as in \eqref{eq:def.omega}.
Set 
\begin{equation}
r=b(n+m+1,2\omega,m+n+2)/2\,.
\end{equation}
Then $\rho_r(f,g,h_{\FJ})=f^\star$.
\end{theorem}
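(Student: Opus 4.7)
The plan is to mirror closely the proof of Theorem \ref{theo:pop}, replacing Theorem \ref{theo:rep} by the just-proved Theorem \ref{theo:rep.quadra.module}. The payoff is that the witness polynomial already lives in the truncated quadratic module $Q_\omega(g)[x,\bar\lambda]$ rather than in the truncated preordering $P_\omega(g)[x,\bar\lambda]$, so there is no need to pass to the product vector $\Pi g$; this is exactly why the statement involves $\rho_r(f,g,h_{\FJ})$ and not $\rho_r(f,\Pi g,h_{\FJ})$.

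First I would use \cite[Lemma 17]{mai2022exact} to lift the problem \eqref{eq:pop} into the enlarged space of variables $(x,\bar\lambda)\in\R^{n+m+1}$, obtaining
\begin{equation}
f^\star=\min_{x,\bar\lambda}\{f(x)\,:\,x\in S(g),\,(x,\bar\lambda)\in V(h_{\FJ})\}\,.
\end{equation}
Viewed this way, the optimization has $m$ inequality constraints $g_j\ge 0$ (independent of $\bar\lambda$) and $l:=n+m+1$ equality constraints coming from the entries of $h_{\FJ}$.

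Next, since $f-f^\star\ge 0$ on $S(g)$, and since the two standing hypotheses of Theorem \ref{theo:rep.quadra.module} (the normalization $\|\tilde g_i\|\le 1/2$ with $g_m=\tfrac{1}{2}-x_1^2-\dots-x_n^2$, and the finiteness of $f(C(g))$) are exactly what the current theorem assumes, I would apply Theorem \ref{theo:rep.quadra.module} to $f-f^\star$ to produce $q\in Q_\omega(g)[x,\bar\lambda]$ with $(f-f^\star)-q$ vanishing on $V(h_{\FJ})$. At this stage one should also record that $\omega$ is large enough to meet the mild technical requirement $2\omega\ge d+1$ required by Lemma \ref{lem:mom.sos}, which is immediate from the definition of $\omega$ in \eqref{eq:def.omega} via $t(\cdot)$, since already $t\ge u\ge b(n,d+1,\cdot)/2$.

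Finally I would invoke the sixth statement of Lemma \ref{lem:mom.sos}, applied in the ambient dimension $n+m+1$ with constraint vector $g$ (viewed as polynomials in $(x,\bar\lambda)$), equality vector $h_{\FJ}$ of length $l=n+m+1$, and witness $q$ of the prescribed degree. This yields $\rho_r(f,g,h_{\FJ})=f^\star$ with $r=b(n+m+1,2\omega,(n+m+1)+1)/2=b(n+m+1,2\omega,n+m+2)/2$, which matches the stated $r$. There is no genuine obstacle here; the main care point is a clean bookkeeping of the ambient dimension and the number of equalities, together with the verification that $f$ and the $g_j$ continue to satisfy the hypotheses of Lemma \ref{lem:mom.sos} when reread as polynomials in the enlarged variables $(x,\bar\lambda)$.
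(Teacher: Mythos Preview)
Your proposal is correct and follows essentially the same approach as the paper's own proof: lift problem \eqref{eq:pop} to the enlarged space via \cite[Lemma 17]{mai2022exact}, apply Theorem \ref{theo:rep.quadra.module} to $f-f^\star$ to obtain $q\in Q_\omega(g)[x,\bar\lambda]$ with $f-f^\star-q$ vanishing on $V(h_{\FJ})$, and then conclude with the sixth statement of Lemma \ref{lem:mom.sos}. Your additional bookkeeping (the count $l=n+m+1$ of equalities and the check $2\omega\ge d+1$, which follows simply from $\omega\ge d\cdot c(n+m+1,d+1,n+m+1)\ge d$) is helpful detail that the paper leaves implicit.
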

\begin{proof}
Note that \cite[Lemma 17]{mai2022exact} implies \eqref{eq:equi.prob1}.
By assumption, Theorem \ref{theo:rep.quadra.module} yields that there exists $q\in Q_\omega(g)[x,\bar \lambda]$ such that $f-f^\star-q$ vanishes on $V(h_{\FJ})$.
Applying the sixth statement of Lemma \ref{lem:mom.sos}, we obtain the conclusion.
\end{proof}
\begin{remark}
The degree bound $\omega$ in Theorem \ref{theo:rep.quadra.module} is not very interesting,  and so is the rate $r$ in Theorem \ref{theo:pop.quadra}. 
It is because the bound $\omega$ in Theorem \ref{theo:rep.quadra.module} depends on all information of $f,g_j$.
Contrary to this, the degree bound $w$ in Theorem \ref{theo:rep} only depends on $n$  (the number of variables), $m$ (the number of polynomial inequalities $g_j$), and $d$ (the upper bound on the degree of $f,g_j$).
We conclude similarly to other representations without denominators involving quadratic modules in \cite{mai2022exact}.
\end{remark}
\subsection{The case with denominators}
\begin{lemma}\label{lem:quadra.module.deno}
Let $f,g_1,\dots,g_m\in\R_d[x]$ and $h_1,\dots,h_l\in\R_{d+1}[x]$. 
Assume that the following conditions hold:
\begin{enumerate}
\item $\|\tilde g_i\|\le \frac{1}{2}$, $i=1,\dots,m$, $g_m:=\frac{1}{2}-x_1^2-\dots-x_n^2$;
\item $f$ is non-negative on $S(g)$ with $g=(g_1,\dots,g_m)$;
\item $f(V(h)\backslash A)$ is finite with $h=(h_1,\dots,h_l)$ and $A\subset \R^n$.
\end{enumerate}
  
Set $r:=|f(V(h)\backslash A)|$ and 
\begin{equation}
u:=\max_{t\in f(V(h)\backslash A)}w(n,d,g,(h,f-t))\,.
\end{equation}
Then there exist $q\in Q_t(g)[x,\bar\lambda]$ with $t=dr+u$ such that $f - q$ vanishes on $V(h)\backslash A$.
\end{lemma}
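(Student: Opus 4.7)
The plan is to mirror the proof of Lemma \ref{lem:quadra.module} essentially line by line, adjusting only the target set. Since $f(V(h)\backslash A)$ is finite, I enumerate its elements as $t_1,\dots,t_r$ with $t_i\ne t_j$ for $i\ne j$, and define $W_j:=V(h,f-t_j)$ for $j=1,\dots,r$. The key observation is that although each $W_j$ may contain extra points lying in $A$, we still have the inclusion $V(h)\backslash A\subset W_1\cup\cdots\cup W_r$, which is the only thing needed for the final step: it suffices to show that $f-q$ vanishes on each $W_j$.

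Next I introduce the Lagrange interpolants $p_j(x)=\prod_{i\ne j}(f(x)-t_i)/(t_j-t_i)$ as in \eqref{eq:lagrange.pol}. Each $p_j$ satisfies $p_j(W_i)=\{\delta_{ij}\}$ and has degree at most $d(r-1)$. I then partition the indices: after reindexing, $W_j\cap S(g)=\emptyset$ for $j=1,\dots,s$ and $W_i\cap S(g)\ne\emptyset$ for $i=s+1,\dots,r$. For the non-intersecting indices, Lemma \ref{lem:rep-1.Pu.eq} applied with the equalities defining $W_j$ (i.e.\ $(h,f-t_j)$) furnishes a certificate $-1\in Q_u(g)[x]+I_u(h,f-t_j)[x]$, from which I extract $v_j\in Q_u(g)[x]$ such that $-1=v_j$ on $W_j$. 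Using the identity $f=s_1-s_2$ with $s_1=(f+\tfrac{1}{2})^2$ and $s_2=f^2+\tfrac{1}{4}$, I obtain $f=s_1+v_js_2=:q_j\in Q_{u+d}(g)[x,\bar\lambda]$ on $W_j$. For the intersecting indices, non-negativity of $f$ on $S(g)$ forces $t_i\ge 0$, so $t_i$ itself is a sum of squares providing the certificate on $W_i$.

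Assembling everything as in \eqref{eq:rep.q}, I set
\begin{equation}
q=\sum_{j=1}^s q_j p_j^2+\sum_{i=s+1}^r t_i p_i^2\,.
\end{equation}
A direct degree count gives $\deg(q_jp_j^2)\le 2(d+u+d(r-1))=2(dr+u)$ and $\deg(t_ip_i^2)\le 2d(r-1)\le 2(dr+u)$, so $q\in Q_t(g)[x,\bar\lambda]$ with $t=dr+u$. By construction $f-q$ vanishes on $W_1\cup\cdots\cup W_r$, hence on the subset $V(h)\backslash A$, which is the desired conclusion.

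The argument is essentially routine once one notices that the proof of Lemma \ref{lem:quadra.module} never actually uses that $V(h)$ equals $W_1\cup\cdots\cup W_r$; it only uses the containment. Thus the only substantive point to verify is that $u:=\max_{t\in f(V(h)\backslash A)}w(n,d,g,(h,f-t))$ is a valid uniform choice: this is immediate since $f(V(h)\backslash A)$ is finite, so the max is attained. I expect no real obstacle — the lemma is really a corollary of the same interpolation-plus-Putinar scheme used for Lemma \ref{lem:quadra.module}, with ``$V(h)$'' replaced throughout by ``$V(h)\backslash A$'' in the target of vanishing, and with the set of attained values trimmed accordingly.
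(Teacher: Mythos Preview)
Your proposal is correct and follows exactly the same approach as the paper: enumerate the finite value set $f(V(h)\backslash A)=\{t_1,\dots,t_r\}$, define $W_j=V(h,f-t_j)$, and then reuse the construction from Lemma \ref{lem:quadra.module} verbatim, noting at the end that $V(h)\backslash A\subset W_1\cup\dots\cup W_r$. The paper's own proof simply states that one proceeds ``in much the same way as the proof of Lemma \ref{lem:quadra.module}'' and records this containment, so your write-up is in fact more detailed than the original.
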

Denote by $t'(n,d,r,f,g,h)$ the parameter $t$ in Lemma \ref{lem:quadra.module.deno}.
Note that it holds that 
\begin{equation}\label{eq:increasing.t.deno}
t'(n,d,r,f,g,h)\le t'(n,d,r',f,g,h)\text{ if }r\le r'\,.
\end{equation}
\begin{proof}
By assumption, we get
$f(V(h)\backslash A) = \{t_1 ,\dots, t_r \} \subset \R$,
where $t_i\ne t_j$ if $i\ne j$.
We now handle in much the same way as the proof of Lemma \ref{lem:quadra.module} to get $q\in Q_t(g)[x]$, and hence $f - q$ vanishes on $V(h)\backslash A\subset W_1\cup\dots\cup W_r$.
\end{proof}

In the following theorem, we state the implicit degree bound for the representation with denominators associated with quadratic modules in \cite[Theorem 9]{mai2022exact}:
\begin{theorem}\label{theo:rep.quadra.module.deno}
Let $f,g_1,\dots,g_m\in\R_d[x]$ with $d\ge 1$. 
Assume that the following conditions hold:
\begin{enumerate}
\item $\|\tilde g_i\|\le \frac{1}{2}$, $i=1,\dots,m$ and $g_m:=\frac{1}{2}-x_1^2-\dots-x_n^2$;
\item $f$ is non-negative on $S(g)$  with $g:=(g_1,\dots,g_m)$.
\end{enumerate}
Set 
\begin{equation}\label{eq:def.omega.deno}
\omega:=t'(n,d,2\times c(n+m+1,d+1,n+m+1),f,g,h_{\FJ})\,.
\end{equation}
Then there exists $q\in Q_\omega(g)[x,\bar \lambda]$ such that $\lambda_0(f-q)$ vanishes on $V(h_{\FJ})$, where $\bar\lambda:=(\lambda_0,\dots,\lambda_m)$ and $h_{\FJ}$ is defined as in \eqref{eq:.polyFJ}.
\end{theorem}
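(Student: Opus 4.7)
The plan is to mirror the proof of Theorem \ref{theo:rep.deno}, but to replace the use of Lemma \ref{lem:quadra.deno} (preordering version) with Lemma \ref{lem:quadra.module.deno} (quadratic module version). The key insight that distinguishes this argument from the proof of Theorem \ref{theo:rep.quadra.module} is that we do not require any assumption on the image of $C(g)$ under $f$; instead, we extract finiteness of $f$ on the relevant set by excising $\{\lambda_0=0\}$.

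First, I would decompose $V(h_{\FJ})\setminus\{\lambda_0=0\}$ into semi-algebraically path connected components. Writing
\begin{equation}
V(h_{\FJ})\setminus\{\lambda_0=0\}=\bigl(V(h_{\FJ})\cap\{\lambda_0>0\}\bigr)\cup \bigl(V(h_{\FJ})\cap\{\lambda_0<0\}\bigr)
\end{equation}
and applying Lemma \ref{lem:num.connected} separately to each piece (each piece being a basic semi-algebraic set defined by polynomials of degree at most $d+1\ge 2$ in $n+m+1$ variables, using at most $n+m+1$ defining polynomials), the total number of connected components $Z_1,\dots,Z_s$ satisfies
\begin{equation}
s\le 2\times c(n+m+1,d+1,n+m+1)\,.
\end{equation}
Next, invoking \cite[Lemma 24]{mai2022exact} (the same tool used in the proof of Theorem \ref{theo:rep.deno}), $f$ is constant on each $Z_i$, so the set $f(V(h_{\FJ})\setminus\{\lambda_0=0\})$ is finite with cardinality $r\le s$.

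I would then apply Lemma \ref{lem:quadra.module.deno} with the choice $h=h_{\FJ}$, $A=\{\lambda_0=0\}$, working over the variables $(x,\bar\lambda)$ (so the ambient dimension is $n+m+1$). This produces $q\in Q_\xi(g)[x,\bar\lambda]$ with $\xi=t'(n,d,r,f,g,h_{\FJ})$ such that $f-q$ vanishes on $V(h_{\FJ})\setminus\{\lambda_0=0\}$. Multiplying by $\lambda_0$ kills the removed set: $\lambda_0(f-q)$ vanishes on all of $V(h_{\FJ})$.

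Finally, using the monotonicity property \eqref{eq:increasing.t.deno} together with the bound $r\le 2\times c(n+m+1,d+1,n+m+1)$, we conclude
\begin{equation}
\xi=t'(n,d,r,f,g,h_{\FJ})\le t'(n,d,2\times c(n+m+1,d+1,n+m+1),f,g,h_{\FJ})=\omega\,,
\end{equation}
so $q\in Q_\omega(g)[x,\bar\lambda]$ as required. There is no genuine obstacle here since all the heavy lifting is contained in Lemmas \ref{lem:num.connected}, \ref{lem:quadra.module.deno}, and \cite[Lemma 24]{mai2022exact}; the only point to be careful about is that the $g_j$ satisfy the normalization hypotheses ($\|\tilde g_i\|\le 1/2$ and $g_m=\frac12-x_1^2-\cdots-x_n^2$) required for Lemma \ref{lem:quadra.module.deno} to apply, which are precisely the standing assumptions of the theorem.
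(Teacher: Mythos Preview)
Your proof is correct and follows essentially the same approach as the paper's own proof: decompose $V(h_{\FJ})\setminus\{\lambda_0=0\}$ into at most $2\times c(n+m+1,d+1,n+m+1)$ connected components via Lemma~\ref{lem:num.connected}, use \cite[Lemma~24]{mai2022exact} to get finiteness of $f$ on this set, apply Lemma~\ref{lem:quadra.module.deno} to obtain $q$, multiply by $\lambda_0$, and conclude via the monotonicity \eqref{eq:increasing.t.deno}. The only differences are cosmetic---you spell out the splitting into $\{\lambda_0>0\}$ and $\{\lambda_0<0\}$ and the verification of the normalization hypotheses a bit more explicitly than the paper does.
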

\begin{proof}
Using Lemma \ref{lem:num.connected}, we decompose $V(h_{\FJ})\backslash\{\lambda_0=0\}$ into semi-algebraically path connected components:
$Z_1,\dots,Z_s$ with $s$ satisfying \eqref{eq:bound.on.s.deno}
(since each entry of $h_{\FJ}$ has degree at most $d+1\ge 2$).
Accordingly, \cite[Lemma 24]{mai2022exact} says that $f$ is constant on each $Z_i$.
Thus $f(V(h_{\FJ})\backslash\{\lambda_0=0\})$ is finite.
Set $r=|f(V(h_{\FJ})\backslash\{\lambda_0=0\})|$.
Then we get the inequality \eqref{eq:ineq.deno}.
By using Lemma \ref{lem:quadra.module.deno}, there exist $q\in Q_\xi(g)[x,\bar\lambda]$ with $\xi=t'(n,d,r,f,g,h_{\FJ})$ such that $f - q$ vanishes on $V(h_{\FJ})\backslash \{\lambda_0=0\}$.
It implies that $\lambda_0(f - q)$ vanishes on $V(h_{\FJ})$.
By \eqref{eq:ineq.deno}, \eqref{eq:increasing.t.deno} and \eqref{eq:def.omega.deno}, $\xi\le \omega$, and hence $q\in Q_\omega(g)[x,\bar\lambda]$.
This completes the proof. 
\end{proof}
We apply Theorem \ref{theo:rep.quadra.module.deno} for polynomial optimization as follows:
\begin{theorem}\label{theo:pop.quadra.deno}
Let $f,g_1,\dots,g_m\in\R_d[x]$ with $d\ge 1$. 
Let $f^\star$ be as in problem \eqref{eq:pop} with $g=(g_1,\dots,g_m)$.
Assume that the following conditions hold:
\begin{enumerate}
\item $\|\tilde g_i\|\le \frac{1}{2}$, $i=1,\dots,m$ and $g_m:=\frac{1}{2}-x_1^2-\dots-x_n^2$;
\item problem \eqref{eq:pop} has a global minimizer $x^\star$.
\end{enumerate}
Let $h_{\FJ}$ be as in \eqref{eq:.polyFJ} and let $\omega$ be as in \eqref{eq:def.omega.deno}.
Assume that one of the following two conditions holds:
\begin{enumerate}
\item  the Karush--Kuhn--Tucker conditions hold for problem \eqref{eq:pop} at $x^\star$;
\item there exists a sequence of points $(x^{(t)},\bar \lambda^{(t)})_{t\in\N}$ in $(S(g)\times \R^{m+1})\cap V(h_{\FJ})$ such that $(f(x^{(t)}))_{t\in\N}$ converges to $\bar f^\star$ and  $\lambda_0^{(t)}>0$,
\end{enumerate}
Set 
\begin{equation}
r=b(n+m+1,2(\omega+1),m+n+2)/2\,.
\end{equation}
Then $\rho_r(f,g,h_{\FJ},\lambda_0)=f^\star$.
\end{theorem}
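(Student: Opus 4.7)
The plan is to follow the same template as the proof of Theorem \ref{theo:pop.deno}, only replacing the preordering-based Theorem \ref{theo:rep.deno} by its quadratic-module counterpart Theorem \ref{theo:rep.quadra.module.deno}. Since $S(g)=S(\Pi g)$ is not needed here (we are working directly with the quadratic module $Q_\omega(g)$ rather than with a preordering), the conclusion involves $\rho_r(f,g,h_{\FJ},\lambda_0)$ rather than $\rho_r(f,\Pi g,h_{\FJ},\lambda_0)$.

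First, I would use \cite[Lemma 17]{mai2022exact} to rewrite problem \eqref{eq:pop} as the augmented problem
\begin{equation}
f^\star=\min\{f(x)\,:\,x\in S(g),(x,\bar\lambda)\in V(h_{\FJ})\},
\end{equation}
in the variables $(x,\bar\lambda)\in\R^{n+m+1}$. This is an instance of \eqref{eq:pop.equality} with $n+m+1$ variables, inequality constraints $g$, and $n+m+1$ equality constraints (namely the $n$ gradient equations, the $m$ complementary-slackness equations, and the normalization $1-\sum_{j=0}^m\lambda_j^2$ making up $h_{\FJ}$).

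Next I would verify the hypothesis of the third statement of Lemma \ref{lem:mom.sos.deno} with $\theta=\lambda_0$. Under condition (1), the KKT multipliers $(\mu_1,\dots,\mu_m)$ at $x^\star$ can be normalized by dividing $(1,\mu_1,\dots,\mu_m)$ by the Euclidean norm of $(1,\sqrt{\mu_1},\dots,\sqrt{\mu_m})$ (as in \eqref{eq:FJcond}), producing a Fritz John multiplier $\bar\lambda^\star$ with $\lambda_0^\star>0$; thus $(x^\star,\bar\lambda^\star)$ is an optimal solution of the augmented problem satisfying condition (a) of that lemma. Under condition (2), condition (b) is satisfied directly by the given sequence. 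Either way, Lemma \ref{lem:mom.sos.deno}(3) yields $\rho_k(f,g,h_{\FJ},\lambda_0)\le f^\star$ for every $k\in\N$.

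Now I apply Theorem \ref{theo:rep.quadra.module.deno} to the non-negative polynomial $f-f^\star$, obtaining $q\in Q_\omega(g)[x,\bar\lambda]$ such that $\lambda_0(f-f^\star-q)$ vanishes on $V(h_{\FJ})$. Finally I invoke the fourth statement of Lemma \ref{lem:mom.sos.deno} with parameters $n\leftarrow n+m+1$, $l\leftarrow n+m+1$, $w\leftarrow\omega$, $\theta\leftarrow\lambda_0$ (so $\deg(\theta)=1$, forcing $u=1$), giving $\rho_r(f,g,h_{\FJ},\lambda_0)=f^\star$ with precisely
\begin{equation}
r=b(n+m+1,2(\omega+1),(n+m+1)+1)/2=b(n+m+1,2(\omega+1),m+n+2)/2,
\end{equation}
which matches the value stated in the theorem. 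The only delicate point is the bookkeeping of the parameter $u$ in Lemma \ref{lem:mom.sos.deno}: because $\deg(\lambda_0)=1$ and $\lambda_0\in\R_{2}[x,\bar\lambda]$, the minimal admissible choice is $u=1$, which produces the exponent $2(\omega+1)$ in the expression for $r$. No genuine obstacle is expected beyond this verification, as all the heavy lifting has been done in Theorem \ref{theo:rep.quadra.module.deno} and Lemma \ref{lem:mom.sos.deno}.
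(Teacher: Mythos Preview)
Your proposal is correct and follows essentially the same route as the paper's proof: reformulate via \cite[Lemma 17]{mai2022exact}, use Lemma~\ref{lem:mom.sos.deno}(3) to get the upper bound $\rho_k\le f^\star$, invoke Theorem~\ref{theo:rep.quadra.module.deno} for the representation, and conclude with Lemma~\ref{lem:mom.sos.deno}(4). The only slip is in your normalization of the KKT multipliers: to land in $V(h_{\FJ})$ (where the constraint is $\sum_j\lambda_j^2=1$, not $\sum_j\lambda_j=1$) you should divide $(1,\mu_1,\dots,\mu_m)$ by $\|(1,\mu_1,\dots,\mu_m)\|$, not by the norm of $(1,\sqrt{\mu_1},\dots,\sqrt{\mu_m})$; this does not affect the argument since the resulting $\lambda_0^\star$ is still positive.
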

\begin{proof}
Note that \cite[Lemma 17]{mai2022exact} implies \eqref{eq:equi.prob1}.
Note that the first condition implies that the Fritz John conditions hold for problem \eqref{eq:pop} at $x^\star$ with multipliers $\bar\lambda^\star$ such that $\lambda_0^\star>0$, so that $(x^\star,\bar\lambda^\star)$ is a global minimizer for problem \eqref{eq:equi.prob1} with $\lambda_0^\star>0$.
The second condition implies that each $(x^{(t)},\bar \lambda^{(t)})$ is a feasible solution for problem \eqref{eq:equi.prob1} such that $(f(x^{(t)}))_{t\in\N}$ converges to $\bar f^\star$ and  $\lambda_0^{(t)}>0$.
The third statement of Lemma \ref{lem:mom.sos.deno} says that $\rho_k(f,g,h_{\FJ},\lambda_0)\le f^\star$, for every $k\in\N$.
In addition, Theorem \ref{theo:rep.quadra.module.deno} yields that there exists $q\in Q_\omega(\Pi g)[x,\bar \lambda]$ such that $\lambda_0(f-f^\star-q)$ vanishes on $V(h_{\FJ})$.
Applying the final statement of Lemma \ref{lem:mom.sos.deno} (by replacing $h,\theta$ with $h_{\FJ},\lambda_0$), we obtain the conclusion.
\end{proof}

\begin{remark}
Although they need the same assumption that the Karush--Kuhn--Tucker conditions hold at some global minimizer, the semidefinite relaxations in Theorem \ref{theo:pop.quadra.deno} have better complexity than the ones in Theorem \ref{theo:pop.KKT.plus}. 
It is because it does not require an exponential number of matrix variables in each semidefinite relaxation.
\end{remark}

\paragraph{Acknowledgements.} The author was supported by the funding from ANITI.
\bibliographystyle{abbrv}

\end{document}